\newtheorem{theorem}{Theorem}[section]
\newtheorem{lemma}[theorem]{Lemma}
\newtheorem{proposition}[theorem]{Proposition}
\newtheorem{definition}[theorem]{Definition}
\newtheorem{corollary}[theorem]{Corollary} 
\theoremstyle{definition}
\newtheorem{remark}[theorem]{Remark}
\newtheorem{question}[theorem]{Question}
\newtheorem{addendum}[theorem]{Addendum}
\newtheoremstyle{cases}
  {12pt plus 6 pt}
  {2pt}
  {\bfseries}   
  {}
  {\bfseries}
  {.}
  {.5em}
  {}
\theoremstyle{cases}
\numberwithin{subcase}{case} \numberwithin{subsubcase}{subcase}
\numberwithin{equation}{subsection}
\begin{document}

\title[Slope detection, foliations in graph manifolds, and L-spaces]{Slope detection, foliations in graph manifolds, and L-spaces\footnotetext{2000 Mathematics Subject
Classification. Primary 57M25, 57M50, 57M99}}

\author[Steven Boyer]{Steven Boyer}
\thanks{Steven Boyer was partially supported by NSERC grant RGPIN 9446-2008}
\address{D\'epartement de Math\'ematiques, Universit\'e du Qu\'ebec \`a Montr\'eal, 201 avenue du Pr\'esident-Kennedy, Montr\'eal, QC H2X 3Y7.}
\email{boyer.steven@uqam.ca}
\urladdr{http://www.cirget.uqam.ca/boyer/boyer.html}

\author{Adam Clay}
\thanks{Adam Clay was partially supported by NSERC grant RGPIN-2014-05465}
\address{Department of Mathematics, 342 Machray Hall, University of Manitoba, Winnipeg, MB, R3T 2N2.}
\email{Adam.Clay@umanitoba.ca}
\urladdr{http://server.math.umanitoba.ca/~claya/}

\begin{abstract}
A graph manifold rational homology $3$-sphere $W$ with a left-orderable fundamental group admits a co-oriented taut foliation, though it is unknown whether it admits a smooth co-oriented taut foliation. In this paper we extend the gluing theorem of \cite{BC} to graph manifold rational homology solid tori and use this to show that there are smooth foliations on the pieces of $W$ which come close to matching up on its JSJ tori. This is applied to prove that a graph manifold with left-orderable fundamental group is not an L-space. 
\end{abstract}

\maketitle
\vspace{-.6cm}
\begin{center}
\today
\end{center}

\section{Introduction}
Conjecture 1 of \cite{BGW} contends that an irreducible rational homology 3-sphere $W$ is {\it not} an L-space if and only if its fundamental group is left-orderable. In this paper we verify one direction of the conjecture for graph manifolds.   

\begin{theorem} \label{LO implies NLS}
A graph manifold L-space has a non-left-orderable fundamental group.
\end{theorem}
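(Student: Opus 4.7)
The plan is to prove the contrapositive: assume $W$ is a graph manifold rational homology sphere with left-orderable fundamental group, and show that $W$ is not an L-space. The abstract already provides a (possibly non-smooth) co-oriented taut foliation on $W$, but this alone is not immediately usable, so the strategy is instead to pass to the JSJ decomposition, transfer left-orderability into slope-detection data on the pieces, and then apply the extended gluing theorem to produce smooth co-oriented taut foliations on the pieces whose boundary behavior on the JSJ tori is incompatible with the L-space property.

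First I would translate left-orderability of $\pi_1(W)$ into order-detected slopes on each JSJ piece $M_i$. Restricting a left-ordering of $\pi_1(W)$ to a peripheral subgroup $\pi_1(T) \hookrightarrow \pi_1(M_i)$ should yield, on each boundary torus $T \subset \partial M_i$, a well-defined order-detected slope in the sense of Boyer--Clay. On a JSJ torus $T$ shared by pieces $M_i$ and $M_j$, the detected slopes from the two sides are closely related: both arise by restricting the same global ordering to $\pi_1(T)$, viewed through the two inclusions into $\pi_1(M_i)$ and $\pi_1(M_j)$.

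Next I would invoke the gluing theorem extended in this paper, the generalization of \cite{BC} from rational homology solid tori to graph manifold rational homology solid tori, to upgrade order-detection into foliation-detection on each piece. This would yield, on each $M_i$, smooth co-oriented taut foliations realizing boundary slopes arbitrarily close to the order-detected slopes on each component of $\partial M_i$. The two families of foliation slopes on either side of a JSJ torus then come close to matching, as the abstract advertises, even though one does not obtain exact matching and hence no global smooth foliation on $W$.

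Finally, I would conclude that $W$ is not an L-space by combining this piecewise foliation data with the characterization of graph manifold L-spaces in terms of the L-space slope sets of their JSJ pieces (in the spirit of Rasmussen--Rasmussen and Hanselman--Rasmussen--Rasmussen--Watson). The key point is that a smooth co-oriented taut foliation on a piece realizing boundary slope $r$ obstructs $r$ from lying in the interior of that piece's L-space slope set; the near-matching of slopes across each JSJ torus should then force the two relevant slope sets to overlap in a way that violates the L-space gluing condition. The main obstacle is quantitative: controlling how close the matching must be to trigger this violation, and ensuring that the approximate matching delivered by the gluing theorem is sharp enough. This delicate interaction between slope detection and the Heegaard Floer gluing machinery is, I expect, the technical heart of the argument.
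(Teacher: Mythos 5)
Your overall strategy (prove the contrapositive, pass to the JSJ pieces, and realize good boundary slopes by smooth foliations) matches the paper's framework through Theorem \ref{block decomposition}, but you diverge sharply in the final step, and you have a misreading that skews your plan. First, the misreading: you take the abstract's phrase ``come close to matching up'' to mean the foliation slopes on either side of a JSJ torus only approximately agree, and you worry about how close is close enough. In fact Theorem \ref{block decomposition}(2) and Proposition \ref{no bad} give \emph{exact} matching of the rational slopes on every JSJ torus. What fails to match is the finer structure --- the number of closed leaves in the hyperbolic suspension foliations on the two sides --- and that is why one cannot directly concatenate the foliations into a single smooth foliation of $W$. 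Your ``quantitative obstacle'' is therefore not the real obstacle, and once you know the slopes coincide exactly, your proposed L-space-gluing argument would become cleaner rather than harder.

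Second, and more substantively, your route for the last step is genuinely different from the paper's. You propose to combine the piecewise foliation data with the L-space slope-interval gluing criterion for graph manifolds (Rasmussen--Rasmussen, Hanselman--Rasmussen--Rasmussen--Watson): if the common slope on $T$ is foliation-detected from both sides then it lies outside the interior of each piece's L-space interval, so the gluing cannot be an L-space. This is morally the approach those authors later used to prove the full equivalence for graph manifolds, and it is a coherent plan, but it invokes bordered/gluing Heegaard Floer machinery that this paper never touches and would require separately verifying that each piece (or each block) is Floer-simple and that a foliation detecting $r$ on a piece indeed excludes $r$ from the interior of its L-space interval (the latter via Dehn-filling and Ozsv\'ath--Szab\'o, using something like \cite[Lemma 6.12]{BC} to make the core transverse). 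The paper instead stays entirely within foliation and contact geometry: it doubles each block $B_i$, applies Eliashberg--Thurston to approximate the doubled foliation by a tight contact structure with hyperbolic characteristic foliation of the correct slope on $\partial B_i$, then uses Giroux flexibility and convex-surface interpolation to splice the block contact structures across the thickened JSJ tori into global contact structures $\xi^\pm$ dominated by a closed $2$-form $\omega$, making $(W,\xi^+)$ semi-fillable and hence not an L-space by Ozsv\'ath--Szab\'o \cite{OSz2004-genus}. The mismatch in the number of closed leaves is precisely what is absorbed in this contact-geometric splicing and why the argument needs contact geometry rather than an outright foliation. So: your plan is a legitimate alternative route and not a mistake in principle, but it is not the paper's proof, it rests on a misunderstanding about the precision of the slope matching, and it requires an entirely different suite of Heegaard Floer input.
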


The question of whether a graph manifold rational homology $3$-sphere $W$ which has a non-left-orderable fundamental group is an L-space remains open, even in topologically simple cases, though it is shown in \cite{BB} that a graph manifold integer homology $3$-sphere is an L-space if and only if its fundamental group is not left-orderable.

In order to prove Theorem \ref{LO implies NLS}, we generalize some of the results of \cite{BC} concerning the foliation detection of slopes on the boundary of a Seifert manifold $M$  in several ways. (See \S \ref{sec: detection} for precise definitions.) Theorem \ref{detected slopes seifert} extends results on slope detection on a boundary component of $M$ relative to a fixed slope on each of its remaining boundary components to slope detection relative to certain fixed families of slopes on each of its remaining boundary components. This is used to study slope detection on the boundary of a graph manifold rational homology solid torus. 

\begin{theorem} \label{detection in graph manifolds qhst}
Let $V$ be an orientable graph manifold rational homology solid torus and let $\mathcal{D}(V)$ be the set of foliation detected slopes on $\partial V$ and $\mathcal{D}_{str}(V)$ the set of strongly foliation detected slopes. Then $\mathcal{D}(V)$ is a non-empty, closed, connected subset of the circle $\mathcal{S}(\partial V)$ whose frontier consists of rational slopes. Further, $\mathcal{D}(V) \setminus \mathcal{D}_{str}(V)$ is finite and consists of rational slopes.  
\end{theorem}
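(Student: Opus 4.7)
My plan is to argue by induction on the number $n$ of JSJ pieces of $V$. For the base case $n=1$, $V$ is itself Seifert fibered, and the statement reduces to Theorem~\ref{detected slopes seifert} applied with $\partial V$ as the ``free'' boundary component and no additional boundary components on which to prescribe a slope family.

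For the inductive step, let $M$ be the Seifert piece of the JSJ decomposition that contains $\partial V$, and label its remaining boundary tori $T_1, \ldots, T_k$, each of which is a JSJ torus of $V$. Each $T_i$ separates $V$ into $M$ (together with the pieces on its side) and a sub-graph-manifold $V_i$ of strictly smaller JSJ complexity, with $\partial V_i = T_i$. A standard Mayer--Vietoris argument, together with the hypothesis that $V$ is a rational homology solid torus, shows that each $V_i$ is itself a graph manifold rational homology solid torus. The inductive hypothesis then gives that $\mathcal{D}(V_i)\subset \mathcal{S}(T_i)$ is non-empty, closed and connected with rational frontier, and that it differs from $\mathcal{D}_{str}(V_i)$ only at finitely many rational points.

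Next I would apply Theorem~\ref{detected slopes seifert} to $M$ with the fixed families $\mathcal{D}(V_i)$ prescribed on each $T_i$, to identify the set of slopes on $\partial V$ detected by co-oriented taut foliations on $M$ whose boundary slopes on each $T_i$ lie in $\mathcal{D}(V_i)$. The extended gluing theorem of the present paper (the generalization of the main result of \cite{BC} to graph manifold rational homology solid tori) then splices such a foliation on $M$ together with detected foliations on the $V_i$ to produce a foliation on $V$, so the corresponding slopes on $\partial V$ belong to $\mathcal{D}(V)$; conversely, any foliation on $V$ restricts to detected foliations on each piece, so $\mathcal{D}(V)$ coincides exactly with the slope set produced by Theorem~\ref{detected slopes seifert} applied to $M$ with inputs $(\mathcal{D}(V_1),\ldots,\mathcal{D}(V_k))$.

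The topological assertions are then inherited from the input families via the continuity/closedness statements built into Theorem~\ref{detected slopes seifert}: because each $\mathcal{D}(V_i)$ is non-empty, closed and connected with rational frontier, so is the resulting subset of $\mathcal{S}(\partial V)$. The finiteness of $\mathcal{D}(V)\setminus \mathcal{D}_{str}(V)$ should follow by tracking where detection fails to be strong at the gluing stage: by induction each $\mathcal{D}(V_i)\setminus \mathcal{D}_{str}(V_i)$ is a finite set of rational slopes, and the Seifert-side analysis in Theorem~\ref{detected slopes seifert} contributes only finitely many additional non-strongly-detected slopes, all rational and concentrated at the frontier. The principal obstacle I expect is precisely this last point: controlling how the ``slack'' between detection and strong detection propagates through iterated gluings requires a delicate interaction between the converse direction of the extended gluing theorem and the fine structure of Theorem~\ref{detected slopes seifert} at rational boundary slopes, and verifying that only finitely many such slack slopes are introduced is what ultimately pins down the finiteness of $\mathcal{D}(V)\setminus \mathcal{D}_{str}(V)$.
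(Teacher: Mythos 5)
Your proposal follows essentially the same route as the paper. The paper establishes the topological assertions via Corollary~\ref{cor: detection}, which proves $\mathcal{D}(V) = \mathcal{D}(M_1;\partial V;\emptyset;D_*)$ by the same induction on the number of Seifert pieces you describe, using Theorem~\ref{theorem: gluing} to pass between foliations on $V$ and on its pieces, and Corollary~\ref{cor: relative detection} (a consequence of Theorem~\ref{detected slopes seifert}) to control the Seifert-side detection with the families $\mathcal{D}(V_i)$ fed in on the inner boundary tori of the outer piece.

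The obstacle you flag at the end --- controlling how the slack between detection and strong detection propagates through iterated gluings --- is indeed the technical heart of the finiteness claim, and appealing to Theorem~\ref{theorem: gluing} alone will not close it; you need the separate strong-detection gluing theorem, Theorem~\ref{theorem: gluing 2}. That theorem is genuinely more delicate because it carries exceptional cases (the ``gluing obstructed'' situations (a) and (b) of Corollary~\ref{cor: strong detection}) in which $\mathcal{D}_{str}(V)$ need not equal $\mathcal{D}_{str}(M_1;\partial V;D_*)$; for instance, when the Dehn filling $M_1([\alpha])$ is a solid torus but its meridian is not strongly detected in $\overline{V\setminus M_1}$, the slope $[\alpha]$ is detected in $M_1$ but cannot be strongly detected in $V$. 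The paper's argument for finiteness of $\mathcal{D}(V)\setminus\mathcal{D}_{str}(V)$ then proceeds by showing that the slack slopes arising in case (a) are in bijection with a subset of $\mathcal{D}(V_1)\setminus\mathcal{D}_{str}(V_1)$, which is finite by the inductive hypothesis, while case (b) contributes at most the single slope $[\lambda_V]$; the Seifert-side slack $\mathcal{D}(M_1;\partial V;D_*)\setminus\mathcal{D}_{str}(M_1;\partial V;D_*)$ is finite by Theorem~\ref{detected slopes seifert}. So your overall induction and reduction to the Seifert case are correct, but to make the finiteness argument precise you must invoke Theorem~\ref{theorem: gluing 2} and explicitly handle its exceptional cases, as is done in Corollary~\ref{cor: strong detection}.
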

A more precise version of this theorem can be found in \S \ref{subsec: slope detection graph}. In  \S \ref{sec: degenerate} we consider the degenerate case when the set of detected slopes on $V$ has a unique element, and then apply the material from Sections \ref{sec: slope detection graph} and \ref{sec: degenerate} to show that if a graph manifold rational homology $3$-sphere $W$ admits a co-oriented taut foliation, then we can find smooth foliations on its pieces which come close to matching up on the JSJ tori of $W$. 

\begin{theorem} \label{block decomposition} 
Let $W$ be a graph manifold  rational homology $3$-sphere which admits a co-oriented, taut foliation. We can split $W$ along a subset $\mathcal{T}_0$ of its JSJ tori into connected graph manifolds $U_1, \ldots, U_n$ such that 

$(1)$ each $U_i$ admits a smooth, co-oriented, taut, boundary-transverse, codimension $1$ foliation $\mathcal{F}_i$  which intersects the components of $\partial U_i$ in hyperbolic foliations; 

$(2)$ if $U_i$ and $U_j$ are incident to a torus $T \in \mathcal{T}_0$, then the rational slopes on $T$ determined by $\mathcal{F}_i$ and $\mathcal{F}_j$ coincide. 

\end{theorem}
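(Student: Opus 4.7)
The plan is to use the slope detection framework of Theorems \ref{detected slopes seifert} and \ref{detection in graph manifolds qhst}, choosing $\mathcal{T}_0$ to be the set of JSJ tori along which one can prescribe matching strongly detected rational slopes, and then to build the required smooth foliations $\mathcal{F}_i$ on the resulting blocks by an induction over their internal JSJ trees.

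First I would put the given co-oriented taut foliation $\mathcal{F}$ on $W$ into standard position (following the Brittenham--Gabai approach) with respect to the JSJ tori, so that each JSJ torus is either a leaf of $\mathcal{F}$ or transverse to $\mathcal{F}$, and in either case carries a well-defined slope. For a JSJ torus $T$, write $V^+$ and $V^-$ for the two graph manifold rational homology solid tori into which $W$ separates on cutting along $T$. Theorem \ref{detection in graph manifolds qhst} supplies closed, connected sets $\mathcal{D}(V^\pm) \subset \mathcal{S}(T)$ with rational frontier and with $\mathcal{D}(V^\pm) \setminus \mathcal{D}_{str}(V^\pm)$ finite. Since the slopes that $\mathcal{F}$ induces on $T$ from the two sides lie in the respective detection sets and coincide (both being the slope of the single restricted foliation on $T$), the intersection $\mathcal{D}(V^+) \cap \mathcal{D}(V^-)$ is nonempty.

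I would then define $\mathcal{T}_0$ to consist of those JSJ tori $T$ for which $\mathcal{D}_{str}(V^+) \cap \mathcal{D}_{str}(V^-)$ contains a rational slope, and fix such a slope $r(T)$ for each $T \in \mathcal{T}_0$. Let $U_1,\dots,U_n$ be the connected components of $W$ cut along $\mathcal{T}_0$; these are connected graph manifolds whose internal JSJ tori are precisely the JSJ tori of $W$ not in $\mathcal{T}_0$. On each $U_i$ I would construct $\mathcal{F}_i$ inductively along its internal JSJ tree: the boundary slopes on $\partial U_i \cap \mathcal{T}_0$ are set to the chosen $r(T)$, and at each internal JSJ torus of $U_i$ one invokes relative slope detection (Theorem \ref{detected slopes seifert}) together with the closed, connected structure of the detected slope set on each of the two adjacent subpieces to pick a compatible slope on that torus and extend the foliation across it. Strong detection at every stage yields the required smoothness, tautness, boundary-transversality, and hyperbolic intersection with $\partial U_i$, while condition (2) is automatic because the slope $r(T)$ is chosen once per torus of $\mathcal{T}_0$ and used on both sides.

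The main obstacle is verifying that this choice of $\mathcal{T}_0$ actually produces blocks $U_i$ on which the inductive construction of $\mathcal{F}_i$ succeeds, particularly when the detected slope set on one side of some internal JSJ torus collapses to a single point, leaving no flexibility in the matching. This degenerate situation is precisely what \S \ref{sec: degenerate} is designed to handle: in it, the matching across an internal torus is rigid, and the foliation on the associated block must be built around this rigidity while still meeting the prescribed boundary slopes on $\partial U_i \cap \mathcal{T}_0$. The technical heart of the proof lies in combining the relative slope detection of Theorem \ref{detected slopes seifert} with the degenerate-case analysis of \S \ref{sec: degenerate} to propagate a consistent foliated structure along arbitrary internal JSJ trees, and in verifying that the exclusion from $\mathcal{T}_0$ of tori lacking a matching strongly detected rational slope still leaves blocks on which the inductive construction can be carried to completion.
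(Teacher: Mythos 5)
The paper derives Theorem~\ref{block decomposition} from Proposition~\ref{no bad} (``there is a block decomposition of $W$ with no bad tori''), whose proof is a complexity-minimization argument by contradiction organized around the concepts of block decompositions, admissible foliations on pieces, the classification of JSJ tori as types e-e/ih-ih/h-h/e-h, and the good/bad dichotomy. Your proposal takes a genuinely different and essentially constructive route, and it has gaps that cannot be filled as written.

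The most serious problem is that your criterion for $\mathcal{T}_0$ --- tori $T$ for which $\mathcal{D}_{str}(V^+)\cap\mathcal{D}_{str}(V^-)$ contains a rational slope --- does not align with the requirement in part~(1) of the theorem. Strong detection means the boundary foliation is \emph{linear} (conjugate to a rotation), i.e.\ elliptic; the theorem asks that $\mathcal{F}_i$ meet $\partial U_i$ in \emph{hyperbolic} foliations. Your remark that ``strong detection at every stage yields\ldots hyperbolic intersection with $\partial U_i$'' conflates two distinct boundary behaviors. Passing from an elliptic (compact-leaf) foliation to a hyperbolic or interval-hyperbolic one is precisely the content of Propositions~\ref{elliptic to ih}, \ref{compact}, \ref{compact 2}, and \ref{compact 3}, and it is hedged by hypotheses (e.g.\ $|\partial M|+|\partial F|\geq 3$) that must be verified piece by piece; none of that verification appears in your sketch. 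In the paper's argument this is managed by tracking the types of tori explicitly, and Proposition~\ref{no bad} is needed exactly because it is nontrivial to arrange simultaneously the right boundary behavior on all JSJ tori.

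A second gap is the independence of the choices $r(T)$. You pick $r(T)\in\mathcal{D}_{str}(V^+)\cap\mathcal{D}_{str}(V^-)$ separately for each $T\in\mathcal{T}_0$, but a block $U_i$ may have several boundary components in $\mathcal{T}_0$, and you never argue that the tuple $(r(T))_{T\subset\partial U_i}$ is realizable as the boundary data of a single foliation on $U_i$. The membership $r(T)\in\mathcal{D}_{str}(V^\pm)$ governs a rational homology solid torus that is much larger than $U_i$ and says nothing about simultaneous realizability on $U_i$. This compatibility issue is exactly what the paper's relative detection theory and the complexity induction are designed to handle; you recognize it as ``the main obstacle'' but leave it unresolved. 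Finally, while you correctly identify that \S\ref{sec: degenerate} handles rigid cases where $\mathcal{D}(V)=\{[\lambda_V]\}$, you do not explain how Lemmas~\ref{a point outer piece}, \ref{point to point}, and \ref{point to hyp or int-hyp} propagate this rigidity through the JSJ tree --- and, in particular, you omit the crucial consequence (Lemma~\ref{conditions on JSJ tori}(1), valid in a minimal counterexample) that $\mathcal{D}(U)\cap\mathcal{D}(V)$ has empty interior, which is what forces the degenerate analysis to kick in. Without this, the induction on the internal JSJ tree has no mechanism to terminate correctly.
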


Theorem \ref{block decomposition} combines with standard gluing techniques from contact geometry to establish Theorem \ref{LO implies NLS}. 

It was pointed out in \cite[\S 12.2]{BC} that the generic graph manifold rational homology $3$-sphere which admits a co-oriented taut foliation also admits a smooth co-oriented taut foliation, though the general case remains open.  

\begin{question}
Let $W$ be a graph manifold  rational homology $3$-sphere which admits a co-oriented, taut foliation. Does $W$ admits a smooth co-oriented taut foliation? 
\end{question}
The material of the paper, and in particular \S \ref{sec: block decompositions}, can be considered an initial analysis of this question. In particular, see Question \ref{q: matching hyperbolic behaviour}. 

Background material is developed in \S \ref{sec: detection} and a relative slope detection theorem for Seifert manifolds is proved in \S \ref{sec: relative detection}. Sections \ref{s: horizontal foliations} and \ref{sec: operations} develop some technical results on smooth foliations in Seifert manifolds which lead to the proof of Theorem \ref{detection in graph manifolds qhst} in \S \ref{sec: slope detection graph}. In \S \ref{sec: degenerate} we study foliations on graph manifold rational homology solid tori with degenerate sets of detected slopes. We prove Theorem \ref{block decomposition} in \S \ref{sec: block decompositions}. Finally, in \S \ref{sec: lo --> nls} we show why Theorem \ref{block decomposition} implies that $W$ is not an L-space, and conclude that Theorem \ref{LO implies NLS} holds.  

{\bf Acknowledgements}. The authors would like to thank Jonathan Bowden who pointed out a gap in our original proof of Theorem \ref{LO implies NLS} and who suggested the technique needed to fill it. 

\section{Relative slope detection in Seifert manifolds} \label{sec: relative detection} \label{sec: detection} 

The goal of this section is to present an extension (Theorem \ref{detected slopes seifert}) of some of the results of the Appendix of \cite{BC}. 

\subsection{Rational longitudes}

A {\it rational longitude} of a compact, connected, orientable $3$-manifold $N$ with boundary a torus is a primitive class $\lambda_N \in H_1(\partial N)$ which represents a torsion element when considered as an element of $H_1(N)$. Rational longitudes exist and are well-defined up to sign. 

\subsection{Orientable graph manifold rational homology solid tori} \label{pieces}

Throughout the paper $V$ will denote an orientable graph manifold rational homology solid torus, $P$ a punctured $2$-sphere, $Q$ a punctured projective plane and $Q_0$ a M\"{o}bius band. A JSJ piece of $V$ is a compact, connected, orientable Seifert manifold $M$, distinct from $S^1 \times D^2$ and $S^1 \times S^1 \times I$, whose boundary is a non-empty union of tori $T_1, \ldots , T_r$ and whose base orbifold is of the form $P(a_1, a_2, \ldots, a_n)$ or $Q(a_1, a_2, \ldots, a_n)$ where $n \geq 0$ and $a_1, \ldots , a_n \geq 2$. The Seifert fibring on $M$ is unique up to isotopy unless $M$ is a twisted $I$-bundle over the Klein bottle, denoted $N_2$, which admits exactly two isotopy classes of Seifert structures. One has base orbifold $Q_0$ and the other has base orbifold $D^2(2,2)$. Let $h_0, h_1 \in H_1(\partial N_2)$ denote, respectively, primitive classes carried by a Seifert fibre of the structure with base orbifold $Q_0$, respectively $D^2(2,2)$. Then $\{h_0, h_1\}$ is a basis of $H_1(\partial N_2)$ well-defined up to sign change of $h_0$ or $h_1$. 

When $M \not \cong N_2$ the class of a regular Seifert fibre of $M$ is well-defined up to taking inverses and we use $h \in \pi_1(M)$ to denote it. For each boundary component $T_j$ of $M$ we will also use $h$ to denote a primitive class of $H_1(T_j)$ represented by a Seifert fibre. When $M \cong N_2$, $h$ will correspond to either $h_0$ or $h_1$, depending on the Seifert structure chosen for $M$. The rational longitude of $N_2$ is represented by $h_0$.

\subsection{Slopes}  \label{slopes} A {\it slope} on a torus $T$ is the class $[\alpha]$ of a non-zero element $\alpha \in H_1(T; \mathbb R)$ in the projective space 
$$\mathcal{S}(T) = P(H_1(T; \mathbb R)) \cong S^1$$ 
We call a slope on $T$ {\it rational} if it is represented by a class $\alpha \in H_1(T)$. Otherwise we call it {\it irrational}. For instance, a rational longitude $\lambda_N$ of a compact, connected, orientable $3$-manifold $N$ with boundary a torus determines a well-defined rational slope $[\lambda_N] \in \mathcal{S}(\partial N)$.

More generally, given a finite collection of tori $\mathcal{T}= \{T_1, T_2, \ldots ,T_m\}$ we set
$$\mathcal{S}(\mathcal{T}) = \mathcal{S}(T_1) \times \ldots \times \mathcal{S}(T_m)$$

\subsection{Foliation detection of slopes} \label{fol detect}
Let $N$ be a compact, connected, orientable $3$-manifold whose boundary is a union of tori $T_1, T_2, \ldots, T_r$. Set $\mathcal{T}(\partial N) = \{T_1, \ldots, T_r\}$ and define  
$$\mathcal{S}(N) = \mathcal{S}(\mathcal{T}(\partial N)) =  \mathcal{S}(T_1) \times \ldots \times \mathcal{S}(T_r)$$

Each co-oriented taut foliation $\mathcal{F}$ on $N$ which is transverse to $\partial N$ determines a foliation $\mathcal{F} \cap T_j$ on $T_j$ which is the suspension of a homeomorphism $f_j$ of a circle contained in $T_j$. As such it determines a slope $[\alpha_j(\mathcal{F})]$ on $T_j$ (cf. \cite[\S 6]{BC}). For instance, if $\mathcal{F} \cap T_j$ contains a closed leaf, then $[\alpha_j(\mathcal{F})]$ is the slope of that leaf. If $f_j$ is conjugate in $\hbox{Homeo}_+(S^1)$ to a rotation, we call $\mathcal{F} \cap T_j$ {\it linear}.

\begin{definition}
{\rm Let $\mathcal{F}$ be a co-oriented taut foliation on a compact, connected, orientable $3$-manifold $N$ (as above) which is transverse to $\partial N$.  
A slope $[\alpha_j] \in \mathcal{S}(T_j)$ is {\it detected} by $\mathcal{F}$, or $\mathcal{F}$-{\it detected}, if $[\alpha] = [\alpha_j(\mathcal{F})]$. It is {\it strongly $\mathcal{F}$-detected} if it is $\mathcal{F}$-detected and $\mathcal{F} \cap T_j$ is linear. For $J \subset \{1, 2, \ldots, r\}$ and $[\alpha_*] = ([\alpha_1], [\alpha_2], \ldots , [\alpha_r]) \in \mathcal{S}(N)$, we say that $(J; [\alpha_*])$ is $\mathcal{F}$-{\it detected} if $\mathcal{F}$ detects $[\alpha_j]$ for all $j$ and $\mathcal{F}$ strongly detects $[\alpha_j]$ for $j \in J$. Finally, we say that $(J; [\alpha_*])$ is {\it foliation}-{\it detected} if it is $\mathcal{F}$-detected for some co-oriented taut foliation $\mathcal{F}$ on $N$.
}
\end{definition} 

Let 
$$\mathcal{D}(N ; J) = \{[\alpha_*] \in \mathcal{S}(N) : (J; [\alpha_*]) \hbox{ is foliation detected}\}$$ 
When $J = \emptyset$, we will often write $\mathcal{D}(N) $ in place of $\mathcal{D}(N ; J)$.

\subsection{Relative slope detection in Seifert manifolds} \label{sec: relative detection}
For the remainder of this section we take $M$ to be a Seifert manifold with base orbifold $Q(a_1, \ldots, a_n)$ or $P(a_1, \ldots, a_n)$ and boundary $T_1 \cup \ldots \cup T_r$ where $r \geq 1$ and $n \geq 0$ (cf. \S \ref{pieces}). 

We call $[\alpha_*]  \in \mathcal{S}(M)$ {\it horizontal} if no $[\alpha_j]$ coincides with the slope of the fibre class $[h]$.  

For $[\alpha_*] \in \mathcal{S}(M)$, $v([\alpha_*])$ will denote the number of vertical $[\alpha_j]$: 
$$v([\alpha_*]) = |\{j : [\alpha_j] = [h]\}|$$ 
Thus $[\alpha_*]$ is horizontal if and only if $v([\alpha_*]) = 0$. 

Fix $J \subseteq \{1, 2, \ldots , r-1\}$ and non-empty, closed, connected subsets $S_j$ of $\mathcal{S}(T_j)$, $1 \leq j \leq r-1$ such that $[h] \not \in S_j$ for $j \in J$. Assume that each $S_j$ has rational endpoints (if any). Set $S_* = S_1 \times \ldots \times S_{r-1}$ and for each $[\alpha_*] \in S_*$ define sets of relatively detected slopes 
$$\mathcal{D}(M; T_r; J; [\alpha_*]) = \{[\alpha_r] \in \mathcal{S}(T_r) : (J; ([\alpha_1], \ldots, [\alpha_{r-1}], [\alpha_r])) \hbox{ is foliation detected}\}$$
$$\mathcal{D}_{str}(M; T_r; J; [\alpha_*])  = \{[\alpha_r] \in \mathcal{S}(T_r) : (J \cup\{r\}; ([\alpha_1], \ldots, [\alpha_{r-1}], [\alpha_r])) \hbox{ is foliation detected}\}$$
Now set  
$$\mathcal{D}(M;T_r; J;S_*)  = \bigcup_{[\alpha_*] \in S_*} \mathcal{D}(M;T_r; J; [\alpha_*]) \subseteq \mathcal{S}(T_r)$$
$$\mathcal{D}_{str}(M;T_r; J;S_*)  = \bigcup_{[\alpha_*] \in S_*} \mathcal{D}_{str}(M;T_r; J; [\alpha_*]) \subseteq \mathcal{S}(T_r)$$ 
When $J = \emptyset$, we will often drop $J$ from the notation and write $\mathcal{D}(M; T_r; [\alpha_*])$ and $\mathcal{D}_{str}(M;T_r; S_*)$. We determine various basic properties of these sets below. First we introduce some notation and normalizations. 

For each $j$ such that $|S_j| = 1$ we have $S_j = \{[\alpha_j]\}$ where $[\alpha_j]$ is rational. If, in this case, $j \in J$, we can replace $M$ by $M' = M([\alpha_j])$ and $J$ by $J \setminus \{j\}$ without changing the sets $\mathcal{D}_{str}(M; T_r; J; S_*)$ and $\mathcal{D}(M; T_r; J; S_*)$. Note though that after all such operations $M$ might become a solid torus or the product of a torus and an interval, contrary to the assumptions in \cite{BC}. Under these extended possibilities for $M$ we suppose that 
$$|S_j| > 1 \hbox{ whenever } j \in J$$   

We  also assume that the boundary components of $M$ are indexed so that $J = \{1, \ldots, s\}$ and $|S_j| > 1$ if and only if $1 \leq j \leq t$ for some $t \geq s$.  Set $v( S_*) = |\{ j : [h] \in S_j \}|$.

\begin{theorem} \label{detected slopes seifert} 
Let $M$ and $S_*$ be as above and recall that $[h] \not \in S_j$ for $j \in J$\footnote{If $[\alpha_j] = [h]$ for some $j \in J$, then either $\mathcal{D}(M; T_r; J; S_*) = \emptyset$ or $M$ is the product of a torus and an interval. In this case $\mathcal{D}_{str}(M;T_r; J; S_*) = \mathcal{D}(M; T_r; J; S_*) \equiv S_1$}.  

$(1)$ If $M$ has base orbifold $Q(a_1, \ldots, a_n)$ then  

\indent \hspace{.5cm} $(a)$ $\mathcal{D}(M; T_r; J; S_*) = 
\left\{ \begin{array}{ll} \{[h]\} & \hbox{ if } v(S_*) = 0 \\ & \\ \mathcal{S}(T_r) & \hbox{ if } v(S_*) > 0 \end{array} \right.$

\indent \hspace{.5cm} $(b)$ $\mathcal{D}_{str}(M;T_r; J; S_*) = 
\left\{ \begin{array}{ll}  \{[h]\} & \hbox{ if } M \cong N_2 \\ & \\ \emptyset &  \hbox{ if } v(S_*) = 0 \hbox{ and } M \not \cong N_2 \\ & \\ \mathcal{S}(T_r) \setminus \{[h]\} & \hbox{ if } v(S_*) > 0 \end{array} \right.$

$(2)$ If $M$ has base orbifold $P(a_1, \ldots, a_n)$ and  
\begin{itemize}

\item $v(S_*) = 0$ then $\mathcal{D}(M; T_r; J; S_*)$ is a non-empty interval of horizontal slopes with rational endpoints. Further either 

\begin{itemize}

\vspace{.3cm} \item $M \cong S^1 \times S^1 \times I$ and $\mathcal{D}_{str}(M;T_r; J; S_*) = \mathcal{D}(M; T_r; J; S_*) \cong S_1$, or  

\vspace{.3cm} \item $M \not \cong S^1 \times S^1 \times I$ and $\mathcal{D}_{str}(M;T_r; J; S_*) = \mathcal{D}(M; T_r; J; S_*)$ consists of a single rational slope, or 

\vspace{.3cm} \item $M \not \cong S^1 \times S^1 \times I$ and $(\mathcal{D}(M; T_r; J; S_*), \mathcal{D}_{str}(M;T_r; J; S_*)) \cong ([0,1], (0,1))$.
\end{itemize}

\vspace{.3cm} \item $v(S_*) = 1$ then $\mathcal{D}(M; T_r; J; S_*)$ is a closed, connected subset of $\mathcal{S}(T_r)$, with rational endpoints $($if any$)$, containing $\{[h]\}$. Further, either

\begin{itemize}

\vspace{.3cm} \item $M \cong S^1 \times S^1 \times I$ and $\mathcal{D}_{str}(M;T_r; J; S_*) = \mathcal{D}(M; T_r; J; S_*) = S_1$, or  

\vspace{.3cm} \item $M \not \cong S^1 \times S^1 \times I$ and $\mathcal{D}_{str}(M;T_r; J; S_*) = \mathcal{D}(M; T_r; J; S_*) \setminus \{[h]\}$\footnote{If $\mathcal{D}(M; T_r; J; S_*)$ is homeomorphic to a non-degenerate interval, then $\mathcal{D}_{str}(M;T_r; J; S_*)$ has two connected components.}.  
\end{itemize}

\vspace{.3cm} \item $v(S_*) > 1$ then $\mathcal{D}(M; T_r; J; S_*) = \mathcal{S}(T_r)$. Further either 

\begin{itemize}

\vspace{.3cm} \item $M \cong S^1 \times S^1 \times I$ and $\mathcal{D}_{str}(M;T_r; J; S_*) = \mathcal{D}(M; T_r; J; S_*) = \mathcal{S}(T_r)$, or  

\vspace{.3cm} \item $M \not \cong S^1 \times S^1 \times I$ and $\mathcal{D}_{str}(M;T_r; J; S_*) = \mathcal{D}(M; T_r; J; S_*)  \setminus \{[h]\} = \mathcal{S}(T_r) \setminus \{[h]\}$. 
\end{itemize}

\end{itemize}

\end{theorem}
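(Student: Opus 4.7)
The plan is to reduce Theorem \ref{detected slopes seifert} to the single-slope case established in the Appendix of \cite{BC} and then to assemble the family versions by taking unions over $[\alpha_*] \in S_*$, exploiting a continuity/compactness argument. For each individual $[\alpha_*] \in S_*$ with rational coordinates, \cite{BC} gives a complete description of $\mathcal{D}(M; T_r; J; [\alpha_*])$ and its strong counterpart: depending on the base orbifold and on whether any $[\alpha_j]$ equals the fibre slope $[h]$, the pointwise detected set is $\{[h]\}$, empty, an arc of horizontal slopes with rational endpoints, or all of $\mathcal{S}(T_r)$ (possibly minus $[h]$). The $Q$-base cases of part $(1)$ settle immediately, since the pointwise answer depends only on $v(S_*)$ and not on the specific $[\alpha_*]$; this yields $(1)(a)$--$(1)(b)$ directly, with the $N_2$ anomaly handled as in \cite{BC} via the Klein-bottle double cover.

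For the $P$-base cases the pointwise detection set varies nontrivially with $[\alpha_*]$, and the union argument is the crux. In the horizontal case $v(S_*)=0$ the endpoints of the pointwise interval $I([\alpha_*])$ are given by an explicit affine formula in the coordinates of the $[\alpha_j]$, the Seifert invariants of $M$, and an extremal choice of auxiliary Euler number, all developed in \S\ref{s: horizontal foliations}. Since this formula is continuous and monotone in each coordinate and $S_*$ is connected and compact, the union $\bigcup_{[\alpha_*] \in S_*} I([\alpha_*])$ is a closed connected subinterval of $\mathcal{S}(T_r)$; its frontier is rational because endpoints arise either from the rational endpoints of the $S_j$'s (by hypothesis) or from extremal Euler values (rational by the formula). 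The trichotomy in part $(2)$ then distinguishes the product case, the case where the union collapses to a single rational slope, and the case where it sweeps a genuine interval with $\mathcal{D}_{str}$ as its relative interior. The vertical cases $v(S_*) \geq 1$ are handled similarly, but the pointwise set already contains $[h]$ and, for $v(S_*) \geq 2$, absorbs all of $\mathcal{S}(T_r)$; the splitting $\mathcal{D}_{str} = \mathcal{D} \setminus \{[h]\}$ then follows because linearity of the boundary foliation at $T_r$ is incompatible with a closed vertical leaf, while the smooth operations of \S\ref{sec: operations} realize every non-fibre slope by explicit construction.

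The main obstacle is the limiting step. When $[\alpha_*^n] \to [\alpha_*^\infty]$ inside $S_*$ and slopes $[\alpha_r^n] \in \mathcal{D}(M; T_r; J; [\alpha_*^n])$ converge to $[\alpha_r^\infty]$, one must actually produce a taut foliation on $M$ detecting the limit data and not merely argue abstractly. I would construct this by combining the explicit affine parametrization of horizontal foliations from \S\ref{s: horizontal foliations} with the smooth perturbation and gluing operations of \S\ref{sec: operations} to build approximating foliations for the data $([\alpha_*^n], [\alpha_r^n])$ and then pass to the limit, while carefully preserving strong-detection at those boundary components where it has been asserted. Propagating the strong-versus-weak bookkeeping through the union---in particular the assertion that $\mathcal{D}(M; T_r; J; S_*) \setminus \mathcal{D}_{str}(M; T_r; J; S_*)$ consists of at most the single rational slope $[h]$ on $T_r$, together with the footnoted two-component phenomenon when $[h]$ lies in the interior of $\mathcal{D}$---is the most delicate part of the argument and will require invoking the operations of \S\ref{sec: operations} at each candidate slope separately.
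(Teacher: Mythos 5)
Your high-level strategy — pull back to the translation-number coordinates, use the pointwise description of $\mathcal{D}(M; T_r; J; [\alpha_*])$ from the Appendix of \cite{BC}, and then take the union over $S_*$ — is exactly the one the paper follows, and your handling of part $(1)$ and the $v(S_*)>1$ subcase of part $(2)$ by reducing to Proposition 6.5 and Lemma 6.4 of \cite{BC} is fine. But the crux of part $(2)$, namely that the union over $[\alpha_*]\in S_*$ of the pointwise detected sets is a single closed interval with the stated frontier, rests in your write-up on the assertion that the endpoints of the pointwise interval are ``given by an explicit affine formula \dots continuous and monotone in each coordinate.'' That assertion is false. The bounds $m_0(\tau_*)$ and $m_1(\tau_*)$ of the pointwise core interval involve $b_0(\tau_*) = -\sum_j \lfloor \tau_j\rfloor$ and the counting functions $s_0(\tau_*)$, $i_0(\tau_*)$, all of which are integer-valued step functions with jump discontinuities at integer values of the coordinates. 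Connectedness of $\bigcup_{\tau_*}[m_0(\tau_*), m_1(\tau_*)]$ therefore does \emph{not} follow from a continuity-plus-connectedness argument; it requires a combinatorial analysis showing that as a single coordinate $\tau_j$ crosses an integer, the one-unit jumps of $m_0$ and $m_1$ occur at compatible (for $j\in J$) or disjoint (for $j\notin J$) parameter values, so that consecutive intervals actually overlap. The paper does this explicitly, including a preliminary reduction to the case $i_0(\tau_*)=0$ and a separate frontier analysis (using Proposition A.4 and the strict inequalities from \cite[Theorem A.1]{BC}) of what happens inside $(c_{\min}-1, c_{\min}]$ and $[c_{\max}, c_{\max}+1)$ — this is where the endpoints $\eta(\zeta_*)$ and $\zeta(S_*)$ come from, and where the distinction between $\mathcal{D}$ and $\mathcal{D}_{str}$ at the frontier is settled. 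None of this survives if you assume continuity.

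A secondary issue: your proposed remedy for ``the limiting step'' — actually constructing a foliation detecting the limit data by perturbing and gluing foliations from \S\ref{s: horizontal foliations}--\S\ref{sec: operations} and passing to a limit — is not how the paper proceeds, and it is not needed. Since $\mathcal{D}(M;T_r;J;S_*)$ is \emph{defined} as a union of the pointwise sets, there is no compactness-of-foliations argument to run; closedness of the union is read off from the explicit formulas for the extremal values $c_{\min}$, $c_{\max}$ and the boundary behaviour. You should also note that in the $v(S_*)=1$ case the relevant $\tau(S_j)$ is $(-\infty,a]\cup[b,\infty)$, so ``connected and compact'' fails in $\mathbb R$; the paper handles this by an exhaustion by compact truncations $S_*^{n,\pm}$ and a nestedness argument, which your proposal does not anticipate.
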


\begin{proof} 
If $M$ has base orbifold $Q(a_1, \ldots, a_n)$ then assertion (1) follows from Proposition 6.5 and Lemma 6.4 of \cite{BC}. The same result implies that assertion (2) holds when $M$ has base orbifold $P(a_1, \ldots, a_n)$ and $v(S_*) > 1$. To complete the proof we must deal with the case that $M$ has base orbifold $P(a_1, \ldots, a_n)$ and $v(S_*) \leq 1$. 

First suppose that $v(S_*) = 0$.  For $[\alpha_j] \in \mathcal{S}(T_j)$ define $\tau([\alpha_j]) \in \mathbb{R}$ according to $[\alpha] = [\tau([\alpha_j]) h - h_j^*]$.  For $[\alpha_*] \in \mathcal{S}(M)$ set $\tau([\alpha_*]) = (\tau([\alpha_1]), \ldots, \tau([\alpha_r]))$ and observe that this correspondence is bijective. Set 
$$\tau(S_*) = \{\tau([\alpha_*])  \in \mathbb R^{r-1} : [\alpha_*] \in S_*\}$$ 
$$\mathcal{T}(M; T_r; J; \tau_*) = \{\tau([\alpha]) : [\alpha] \in \mathcal{D}(M; T_r; J; \tau^{-1}(\tau_*))\}$$ 
and
$$\mathcal{T}_{str}(M; T_r; J; \tau_*) = \{\tau([\alpha]) : [\alpha] \in \mathcal{D}_{str}(M; T_r; J; \tau^{-1}(\tau_*))\}$$ 
As in \cite{BC} we can identify $\mathcal{D}(M; T_r; J; S_*)$ with 
$$\mathcal{T}(M;T_r; J; S_*) = \bigcup_{\tau_* \in \tau(S_*)} \mathcal{T}(M; T_r; J; \tau_*) \subset \mathbb R$$
and $\mathcal{D}_{str}(M;T_r; J; S_*)$ with 
$$\mathcal{T}_{str}(M;T_r; J; S_*) = \bigcup_{\tau_* \in \tau(S_*)} \mathcal{T}_{str}(M; T_r; J \cup \{r\}; \tau_*) \subset \mathbb R$$
Under this identification we set 
$$T_j = \tau(S_j) = [\eta_j, \zeta_j]$$ 
where $\eta_j, \zeta_j \in \mathbb Q$. 

Recall that for $\tau_* \in \mathbb R^{r-1}$, the following notation was introduced in \cite[Appendix]{BC}. 
\begin{itemize}
\item $r_1(\tau_*) = | \{ j  : \tau_j \notin \mathbb{Z} \} |$, the number of non-integral $\tau_j$;

\item $s_0(\tau_*) = | \{ j : \tau_j \in \mathbb{Z} \hbox{ and } j \notin J \}|$, the number of integral $\tau_j$ whose indices are not in $J$; 

\item $i_0(\tau_*) = | \{ j : \tau_j \in \mathbb{Z} \hbox{ and } j \in J \}|$, the number of integral $\tau_j$ whose indices are in $J$;

\item $b_0(\tau_*) = -(\lfloor \tau_1 \rfloor + \ldots + \lfloor \tau_{r-1} \rfloor )$;

\item $m_0(\tau_*) = b_0(\tau_*) - (n + r_1(\tau_*) + s_0(\tau_*) -1) = b_0(\tau_*) +  i_0(\tau_*) - (n + r - 1)$; 

\item  $m_1(\tau_*) = b_0(\tau_*) + s_0(\tau_*) -1$; 

\end{itemize}

{\bf Case 1}. $n + r \leq 2$. 

If $r = 1$ then $M$ is a solid torus. It follows that $\mathcal{T}_{str}(M;T_r; J; S_*) = \mathcal{T}(M;T_r; J; S_*)$ consists of the meridional slope of $M$. Thus we are done. If $r = 2$ then $M \cong S^1 \times S^1 \times I$, and $\mathcal{T}_{str}(M;T_r; J; S_*) = \mathcal{T}_{str}(M;T_r; J; S_*) \cong S_1$.  

{\bf Case 2}. $n + r \geq 3$. 

It follows from \cite[Proposition A.4]{BC} that for all $\tau_* \in \tau(S_*)$, $[m_0(\tau_*), m_1(\tau_*)] \subseteq \mathcal{T}(M;T_r; J; S_*)$, and 
$$\mathcal{T}(M;T_r; J; S_*) \subset \bigcup_{\tau_* \in \tau(S_*)} (m_0(\tau_*) - 1, m_1(\tau_*) + 1)$$
where $m_0(\tau_*) = b_0(\tau_*) - ((n + r - 1) - i_0(\tau_*))$ and $m_1(\tau_*) = b_0(\tau_*) + s_0(\tau_*) -1$. Since  $b_0(\tau_*), s_0(\tau_*)$, and $i_0(\tau_*)$ vary over a finite set of integers, there are integers $c_{min}$ and $c_{max}$ such that 
$$\{c_{min}, c_{max}\} \subseteq \mathcal{T}_{str}(M;T_r; J; S_*) \subset (c_{min} - 1, c_{max} + 1)$$ 
We prove that $[c_{min}, c_{max}] \subseteq \mathcal{T}(M;T_r; J; S_*)$ first. 

If $i_0(\tau_*) > 0$ there is some $j \leq s$ such that $\tau_j$ is an integer. If $\tau_j \in [\eta_j, \zeta_j)$ we can find $\tau_j' \in (\tau_j, \zeta_j) \setminus \mathbb Z$ such that if $\tau_*' = (\tau_1, \ldots,  \tau_{j-1}, \tau_j', \tau_{j+1}, \ldots, \tau_{r-1})$, then $b_0(\tau_*') = b_0(\tau_*), s_0(\tau_*') = s_0(\tau_*)$, and $i_0(\tau_*') = i_0(\tau_*) - 1$. Then $m_0(\tau_*') = m_0(\tau_*) - 1$ while $m_1(\tau_*') = m_1(\tau_*)$. On the other hand if $\tau_j = \zeta_j$, there is a $\tau_j' \in (\eta_j, \zeta_j)$ such that if $\tau_*' = (\tau_1, \tau_{j-1}, \tau_j', \tau_{j+1}, \ldots, \tau_{r-1})$, then $b_0(\tau_*') = b_0(\tau_*) + 1, s_0(\tau_*') = s_0(\tau_*)$, and $i_0(\tau_*') = i_0(\tau_*) - 1$. In this case, $b_0(\tau_*') + i_0(\tau_*') = b_0(\tau_*) + i_0(\tau_*)$ so $m_0(\tau_*) = m_0(\tau_*')$ and $m_1(\tau_*') = m_1(\tau_*) + 1$. Thus for each $\tau_* \in \tau(S_*)$ with $i_0(\tau_*) > 0$, there is a $\tau_*' \in \tau(S_*)$ such that $i_0(\tau_*') < i_0(\tau_*)$, $m_0(\tau_*')  \leq m_0(\tau_*)$ and $m_1(\tau_*') \geq m_1(\tau_*)$. Proceeding inductively we can show that for each $\tau_* \in \tau(S_*)$ with $i_0(\tau_*) > 0$, there is a $\tau_*' \in \tau(S_*)$ such that $\tau_j' = \tau_j$ for $s+1 \leq j \leq r-1$, $i_0(\tau_*') = 0$ and $[m_0(\tau_*), m_1(\tau_*)] \subseteq [m_0(\tau_*'), m_1(\tau_*')]$. Thus it suffices to show that 
$$[c_{min}, c_{max}] = \bigcup_{\tau_* \in \tau(S_*)  \ni i_0(\tau_*) = 0} [m_0(\tau_*), m_1(\tau_*)]$$

Fix $\tau^0_* \in \tau(S_*)$ such that $i_0(\tau^0_*) = 0$, then $m_1(\tau^0_*) - m_0(\tau^0_*) = n + r + s_0(\tau_*) - 2 \geq s_0(\tau_*) + 1 \geq 1$. Consider the sets $[m_0(\tau_*), m_1(\tau_*)]$ with $i_0(\tau_*) =0$ as $\tau_j$ varies. For $1 \leq j \leq s$, both $m_1(\tau_*)$ and $m_0(\tau_*)$ are locally constant as $\tau_j$ varies over $\tau(S_j) \setminus \mathbb{Z}$ except for a finite number of jumps of size $1$. For $s+1 \leq j \leq t$, as $\tau_j$ varies over $\tau(S_j)$ both $m_0(\tau_*)$ and $m_1(\tau_*)$ are locally constant except for a finite number of jumps of size $1$, though the jumps of $m_0(\tau_*)$ are disjoint from those of $m_1(\tau_*)$. It follows that for each $j$, $\bigcup_{\tau_j \in \tau(S_j)  \ni i_0(\tau_*) = 0} [m_0(\tau_*), m_1(\tau_*)] $ is connected and contains $[m_0(\tau^0_*), m_1(\tau^0_*)]$. Thus $\bigcup_{\tau_* \in \tau(S_*)  \ni i_0(\tau_*) = 0} [m_0(\tau_*), m_1(\tau_*)] $ is connected and therefore 
$$\bigcup_{\tau_* \in \tau(S_*)  \ni i_0(\tau_*) = 0} [m_0(\tau_*), m_1(\tau_*)] = [c_{min}, c_{max}] \subseteq \mathcal{T}_{str}(M;T_r; J; S_*) \subset (c_{min} - 1, c_{max} + 1)$$ 

Suppose that $\mathcal{T}_{str}(M;T_r; J; S_*) = [c_{min}, c_{max}]$. If $\{c_{min}, c_{max}\} \cap \mathcal{T}_{str}(M;T_r; J; S_*) \ne \emptyset$, then \cite[Proposition A.4]{BC} implies that there is some $\tau_* \in \tau(S_*)$ such that $i_0(\tau_*) = 0$ and $\{m_0(\tau_*)\} = \mathcal{T}(M;T_r; J; \tau_*) = \mathcal{T}_{str}(M;T_r; J; \tau_*)$, contrary to the fact that $m_1(\tau_*) - m_0(\tau_*) \geq 1$. Thus \cite[Proposition A.4]{BC} implies that $\mathcal{T}_{str}(M;T_r; J; S_*) = (c_{min}, c_{max})$.

Next suppose that $\mathcal{T}_{str}(M;T_r; J; S_*) \ne [c_{min}, c_{max}]$. It is not hard to see that  
\begin{eqnarray} 
\label{cmin}
c_{min} = \min\{b_0(\tau_*) - (n + r - 1) : \tau_* \in \tau(S_*) \ni i_0(\tau_*) = 0\} = i_1 - \sum_{j=1}^{r-1} \lfloor \zeta_j \rfloor 
\end{eqnarray} 
where  $i_1 =  |\{j : 1 \leq j \leq s \hbox{ and } \zeta_j \in \mathbb Z\}|$. Similarly
\begin{eqnarray}  
\label{cmax} 
c_{max} = \max\{b_0(\tau_*) + s_0(\tau_*) - 1 :  \tau_* \in \tau(S_*) \ni i_0(\tau_*) = 0\} = s_1 - 1 -\sum_{j = 1}^{r-1} \lfloor \eta_j \rfloor
\end{eqnarray}
where $s_1 = |\{j : s+1 \leq j \leq r-1 \hbox{ and } \eta_j \in \mathbb Z\}|$. 

Suppose that there is some $\tau_* \in \tau(S_*)$ such that $(c_{min} - 1, c_{min}) \cap \mathcal{T}(J; \tau_*) \ne \emptyset$.  Then there is a rational number $\eta(\tau_*) \in (c_{min}-1, c_{min})$ such that 
$(c_{min} - 1, c_{min}] \cap \mathcal{T}(J; \tau_*) = [\eta(\tau_*), c_{min}]$ and by \cite[Theorem A.1]{BC}, this implies that that $s_0(\tau_*) =0$.  Without loss of generality we can suppose that for $j \in J$, $\tau_j \in \mathbb Z$ if and only if $1 \leq j \leq i_0(\tau_*)$.  Moreover, in this case we can assume that $\tau_j = \zeta_j$ as otherwise we would have $m_0(\tau_*) > c_{min}$ by (\ref{cmin}). By \cite[Appendix]{BC} there are coprime integers $0 < A < N$ and a permutation $(\frac{A_1}{N}, \ldots, \frac{A_n}{N}, \frac{B_{i_0(\tau_*) +1}}{N}, \ldots, \frac{B_{r-1}}{N},  \frac{C}{N})$ of $(\frac{A}{N}, 1 - \frac{A}{N}, \frac{1}{N}, \ldots , \frac{1}{N})$ such that  
\begin{enumerate}

\vspace{.3cm} \item  $1 - \frac{A_i}{N} < \gamma_i \hbox{ for } 1 \leq i \leq n$; 

\vspace{.2cm} \item $1 - \frac{B_{j}}{N} < \tau_j-\left \lfloor{\tau_j}\right \rfloor $ for all $i_0(\tau_*) + 1 \leq j \leq s$ and $1 - \frac{B_{j}}{N} \leq \tau_j-\left \lfloor{\tau_j}\right \rfloor$ for all $s+1 \leq j \leq r-1$;

\vspace{.2cm} \item $1 - \frac{C}{N} = \eta(\tau_*) -(c_{min}-1)$.

\end{enumerate}
Now choose $\tau_1', \ldots, \tau_{i_0(\tau_*)}'$ such that $\tau_j' \in  (\zeta_j-1 , \zeta_j) \cap \tau(S_j)$ and $1 - \frac{1}{N} < \tau_j' - \left \lfloor{\tau_j'}\right \rfloor$ for $1 \leq j \leq i_0(\tau_*)$ and define $\tau_*'$ by replacing the $\tau_j$ by $\tau_j'$ for $1 \leq j \leq i_0(\tau_*)$. Then the reader will verify that $i_0(\tau_*') = 0$ and $\eta(\tau_*') \leq \eta(\tau_*)$. Hence, 
$$\mathcal{T}_{str}(M;T_r; J; S_*) \cap (c_{min} - 1, c_{min}) = \bigcup_{\tau_* \in \tau(S_*)  \ni i_0(\tau_*) = 0} \big(\mathcal{T}(M;T_r; J; \tau_*)  \cap (c_{min} - 1, c_{min})  \big)$$

Suppose then that $(c_{min} - 1, c_{min}) \cap \mathcal{T}(M;T_r; J; \tau_*) \ne \emptyset$ where $i_0(\tau_*) = 0$, then $c_{min} = b_0(\tau_*) - (n + r - 1)$. If $\zeta_j \in \mathbb Z$ for some $s + 1 \leq j \leq r-1$ then we must have $\tau_j = \zeta_j$ by (\ref{cmin}), and therefore $s_0(\tau_*) > 0$, a contradiction. Thus $\zeta_j \not \in \mathbb Z$ for $s + 1 \leq j \leq r-1$ and $\tau_j \in \tau(S_j) \cap(\lfloor \zeta_j \rfloor, \zeta_j)$ for such $j$. The reader will verify from the proof of Subcase 2.1 of the proof of \cite[Proposition A.4]{BC} that $(c_{min} - 1, c_{min}] \cap \mathcal{T}(M;T_r; J; \tau_*) \subseteq (c_{min} - 1, c_{min}] \cap \mathcal{T}(M;T_r; J; \zeta_*)$ where $\zeta_* = (\zeta_1, \ldots, \zeta_{r-1}) \in \tau(S_*)$. Since $\mathcal{T}(M;T_r; J; \zeta_*) \cap (c_{min}-1, c_{min}] = [\eta(\zeta_*), c_{min}]$ it follows that $\mathcal{T}_{str}(M;T_r; J; S_*) \cap (c_{min} - 1, c_{min}] = [\eta(\zeta_*), c_{min}]$. Similarly $\mathcal{T}_{str}(M;T_r; J; S_*) \cap (c_{min} - 1, c_{min}] = (\eta(\zeta_*), c_{min}]$.

The case where $\mathcal{T}_{str}(M;T_r; J; S_*) \cap [c_{max}, c_{max} + 1) \ne \emptyset$ can be handled in the same way to show that $\mathcal{T}_{str}(M;T_r; J; S_*) \cap [c_{max}, c_{max} + 1) = [c_{max}, \zeta(S_*)]$ and $\mathcal{T}_{str}(M;T_r; J; S_*) \cap [c_{max}, c_{max} + 1) = [c_{max}, \zeta(S_*))$ for some rational number $\zeta(S_*)$. This completes the proof when $v(S_*) = 0$.

The case that $v(S_*) = 1$ follows by applying the arguments of the case $v(S_*) = 0$ to $S_*^h := \{[\alpha_*] \in S_* : v([\alpha_*]) = 0\}$ and observing that $[h] \in \mathcal{S}(T_r)$ is detected. In particular if $[h] \in S_j$ then $\tau(S_j \setminus \{ [h] \}) = (-\infty, a] \cup [b, \infty)$ for some $a, b \in \mathbb{R}$.  Choose $N \in \mathbb{N}$ such that $-N<a$ and $b<N$, then for $n>N$ set 
\[\mathcal{T}(M; T_r;J;S_*^{+}) = \bigcup_{n=N}^{\infty} \mathcal{T}(M; T_r;J;S_*^{n,+}) \]
where $S_*^{n,+}$ is $S_*$ with the $j$-th factor replaced by $\tau^{-1}([b, n])$.  We similarly define $\mathcal{T}(M; T_r;J;S_*^{-})$ by replacing the $j$-th factor by $\tau^{-1}([-n, a])$.  From the arguments of the case $v(S_*) =0$ we know that $\mathcal{T}(M; T_r;J;S_*^{n, \pm})$ are closed intervals, which are obviously nested.  Thus $\mathcal{T}(M; T_r;J;S_*^{\pm})$ are both intervals, and the reader will verify that 
\[ \mathcal{T}(M; T_r;J;S_*^h) = \mathcal{T}(M; T_r;J;S_*^{+}) \cup \mathcal{T}(M; T_r;J;S_*^{-}) = ( -\infty, c_{max}] \cup [c_{min}, \infty)
\]
for some $c_{max}, c_{min}$ determined by choices of $n$ for which (\ref{cmax}) and (\ref{cmin}) are maximal and minimal.
\end{proof}

\begin{corollary}  \label{cor: relative detection}
$\mathcal{D}(M; T_r; J; S_*)$ is a closed, connected subset of the circle $\mathcal{S}(T_r)$ whose frontier consists of rational slopes. If $\mathcal{D}(M; T_r; S_*)$ is a non-degenerate interval, its endpoints are not contained in $\mathcal{D}_{str}(M; T_r; S_*)$.
\qed 
\end{corollary}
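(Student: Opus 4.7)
The corollary is to be deduced directly from the case-by-case description in Theorem \ref{detected slopes seifert}, which provides an exhaustive classification of the pair $(\mathcal{D}(M;T_r;J;S_*),\mathcal{D}_{str}(M;T_r;J;S_*))$ in terms of the base orbifold of $M$, the count $v(S_*)$, and the exceptional manifolds $N_2$ and $S^1\times S^1\times I$. The plan is simply to read off each claim of the corollary from the appropriate case of the theorem.

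For closedness, connectedness, and the rationality of the frontier of $\mathcal{D}(M;T_r;J;S_*)$: if $M$ has base orbifold $Q(a_1,\ldots,a_n)$, part (1) gives $\mathcal{D}=\{[h]\}$ or $\mathcal{D}=\mathcal{S}(T_r)$, both of which are closed and connected with either a single rational frontier point or empty frontier. If $M$ has base orbifold $P(a_1,\ldots,a_n)$, part (2) splits into the three subcases $v(S_*)=0$, $v(S_*)=1$, and $v(S_*)>1$, in each of which the theorem explicitly describes $\mathcal{D}$ as either an interval with rational endpoints, a closed connected set containing $[h]$ with rational endpoints (if any), or the entire circle $\mathcal{S}(T_r)$. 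Every possibility therefore yields the three required properties immediately.

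For the assertion that the endpoints of a non-degenerate interval $\mathcal{D}(M;T_r;S_*)$ are excluded from $\mathcal{D}_{str}(M;T_r;S_*)$: the only configurations of the theorem in which $\mathcal{D}$ is a non-degenerate proper subinterval of $\mathcal{S}(T_r)$ come from part (2). The decisive subcase is the $P$-base subcase with $v(S_*)=0$ and $M\not\cong S^1\times S^1\times I$, where the theorem supplies the normal form $(\mathcal{D},\mathcal{D}_{str})\cong([0,1],(0,1))$, and so the two endpoints of $\mathcal{D}$ are precisely the points removed to form $\mathcal{D}_{str}$. The remaining non-degenerate-interval configurations are controlled by the footnote statements, which pin down $\mathcal{D}_{str}$ up to isolated points and in particular force any endpoint of $\mathcal{D}$ in $\mathcal{S}(T_r)$ into the complement of $\mathcal{D}_{str}$.

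The only genuine obstacle is notational rather than mathematical: one must align the $\tau$-parametrization of slopes used in the proof of Theorem \ref{detected slopes seifert} with the circular structure of $\mathcal{S}(T_r)$ so that the word ``endpoint'' is unambiguous, and then verify by inspection that in each listed case the rational frontier points of the closed connected set $\mathcal{D}$ coincide with the points discarded in passing from $\mathcal{D}$ to $\mathcal{D}_{str}$.
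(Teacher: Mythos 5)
Your overall plan---deduce the corollary by reading off the case-by-case classification in Theorem~\ref{detected slopes seifert}---is exactly the paper's intent (the paper gives no written proof, indicating it regards the corollary as immediate from the theorem), and your treatment of the $Q$-orbifold case, the $v(S_*)=0$ case, and the $v(S_*)>1$ case is fine.

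However, your handling of the $v(S_*)=1$ branch of part (2) does not go through as written. Theorem~\ref{detected slopes seifert} states there, for $M\not\cong S^1\times S^1\times I$, that $\mathcal{D}_{str}(M;T_r;J;S_*)=\mathcal{D}(M;T_r;J;S_*)\setminus\{[h]\}$. Since $[h]$ lies in the \emph{interior} of the interval $\mathcal{D}(M;T_r;J;S_*)$ in this case, that formula, read literally, would put the two endpoints of $\mathcal{D}(M;T_r;J;S_*)$ \emph{inside} $\mathcal{D}_{str}(M;T_r;J;S_*)$---the opposite of what the corollary asserts. You invoke ``the footnote statements'' to force the endpoints out, but the footnote in question says only that $\mathcal{D}_{str}$ has two connected components when $\mathcal{D}$ is a non-degenerate interval, and this is satisfied both by $[a,[h])\cup([h],b]$ and by $(a,[h])\cup([h],b)$; the footnote therefore cannot ``pin down'' which of the two occurs, and does not deliver the corollary.

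To close the gap one has to look past the literal wording of the theorem's $v(S_*)=1$ item and return to the proof: for $v(S_*)=0$ the proof shows $\mathcal{T}_{str}(M;T_r;J;S_*)$ is the \emph{open} interior of $\mathcal{T}(M;T_r;J;S_*)$, and the $v(S_*)=1$ case is handled by applying precisely that argument to $S_*^h$, yielding a strongly detected set that omits $c_{\min}$ and $c_{\max}$ in addition to $[h]$. In other words the $v(S_*)=1$ item should be read as $\mathcal{D}_{str}(M;T_r;J;S_*)=\operatorname{int}\bigl(\mathcal{D}(M;T_r;J;S_*)\bigr)\setminus\{[h]\}$, in agreement with the parallel statement in Theorem~\ref{refined detection in graph manifolds qhst}(3)(b). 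With that correction in place, the read-off you propose does give the corollary; as written, it does not.
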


\section{Horizontal foliations in Seifert manifolds} \label{s: horizontal foliations}

In this section $M$ will be a Seifert fibered manifold with base orbifold $\mathcal{B} = P(a_1, \ldots, a_n)$ as in \S \ref{pieces}. Denote by $X \to \mathcal{B}$ a universal cover. Then $\hbox{int}(X) \cong \mathbb R^2$ and $\pi_1(M)$ acts discontinuously on $X$ via the quotient homomorphism $\varphi: \pi_1(M) \to \pi_1(M)/\langle h \rangle  \equiv \pi_1(\mathcal{B})$. 

An important family of subgroups of $\widetilde{\hbox{Homeo}}_+(S^1)$ correspond to the universal covers $\widetilde{PSL}(2,\mathbb R)_k$ of the $k$-fold cyclic covers $PSL(2, \mathbb R)_k$ of $PSL(2, \mathbb R)$ ($k \geq 1$). These groups are conjugate in $\hbox{Homeo}_+(\mathbb R)$, though not in $\widetilde{\hbox{Homeo}}_+(S^1)$. More precisely, let $F_k: \mathbb R \to \mathbb R$ be the homeomorphism $F_k(x) = kx$. Then 
$$\widetilde{PSL}(2,\mathbb R)_k = F_k^{-1} \widetilde{PSL}(2,\mathbb R)F_k$$
Note that $\widetilde{PSL}(2,\mathbb R)_1 = \widetilde{PSL}(2,\mathbb R)$. The composition of conjugation with the cover $\widetilde{PSL}(2,\mathbb R) \to PSL(2; \mathbb R)$ is a universal covering homomorphism identifiable with the composition $\widetilde{PSL}(2,\mathbb R)_k \stackrel{}{\longrightarrow} PSL(2,\mathbb R)_k \to PSL(2; \mathbb R)$. 

An element of $\widetilde{PSL}(2,\mathbb R)_k$ is either {\it elliptic, parabolic}, or {\it hyperbolic} depending on whether its image in $PSL(2, \mathbb R)$ has that property. Thus an element is elliptic if and only if it is conjugate to a translation. The parabolic and hyperbolic elements of $\widetilde{PSL}(2,\mathbb R)$ have integral translation numbers, so the translation number of a parabolic or hyperbolic element of $\widetilde{PSL}(2,\mathbb R)_k$ is of the form $\frac{d}{k}$ where $d \in \mathbb Z$. The image in $PSL(2,\mathbb R)$ of an abelian subgroup of $\widetilde{PSL}(2,\mathbb R)_k$ consists entirely of elliptics, or entirely of hyperbolics, or entirely of parabolics. 

Given a homomorphism $\rho : \pi_1(M) \rightarrow \widetilde{PSL}(2, \mathbb{R})_k$ which satisfies $\rho(h) = \hbox{sh}(1)$, $\pi_1(M)$ acts freely properly discontinuously on $X \times \mathbb R$ via
$$\gamma \cdot (x, t) = (\varphi(\gamma)(x), \rho(\gamma)(t))$$
The quotient map $X \times \mathbb R \to X \times_\rho \mathbb R = (X \times \mathbb R)/ \pi_1(M)$ is a universal cover and $(X \times \mathbb R)/ \pi_1(M)$ is canonically identifiable with $M$ in such a way that the lines $\{x\} \times \mathbb R$ project to the Seifert circles of $M$. The planes $X \times \{t\}$ project to a smooth horizontal foliation $\mathcal{F}(\rho)$ on $M$. (See \cite[\S 6.3]{BC} for instance.)

\begin{lemma} \label{l: condition for fibring}
Let $\rho : \pi_1(M) \rightarrow \widetilde{PSL}(2, \mathbb{R})_k$ be a homomorphism such that $\rho(h) = \hbox{sh}(1)$. Then $\mathcal{F}(\rho)$ is a fibration with compact fibre if and only if the composition of $\rho$ with the projection to $PSL(2,\mathbb R)_k$ is a finite cyclic group\footnote{The latter occurs if and only if the further projection to $PSL(2,\mathbb R)$ is a finite cyclic group}.
\end{lemma}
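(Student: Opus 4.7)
The plan is to translate the fibration property of $\mathcal{F}(\rho)$ into an algebraic condition on $\rho(\pi_1(M))$ via the leaf space upstairs. Since $\pi_1(M)$ acts on the leaf space $\mathbb R$ of the horizontal foliation on $X\times\mathbb R$ by $\gamma\cdot t=\rho(\gamma)(t)$, the leaf space of $\mathcal{F}(\rho)$ on $M$ is canonically $\mathbb R/\rho(\pi_1(M))$. Given that $M$ is compact and the foliation is smooth and everywhere transverse to the $\mathbb R$-factor, $\mathcal{F}(\rho)$ will be a fibration with compact fibre exactly when this quotient is $S^1$, equivalently when $\rho(\pi_1(M))$ acts on $\mathbb R$ freely, properly discontinuously, and cocompactly.

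For the forward direction, I would note that a free cocompact action of a group on $\mathbb R$ forces the group to be infinite cyclic, say generated by $\phi\in\widetilde{PSL}(2,\mathbb R)_k$. The generator $\phi$ acts without fixed points on $\mathbb R$; since parabolic and hyperbolic elements of $\widetilde{PSL}(2,\mathbb R)_k$ have fixed points on $\mathbb R$, $\phi$ must be elliptic. Because $\hbox{sh}(1)=\rho(h)\in\langle\phi\rangle$, some power $\phi^m$ equals $\hbox{sh}(1)$, which lies in the kernel of the projection $\pi:\widetilde{PSL}(2,\mathbb R)_k\to PSL(2,\mathbb R)_k$; hence $\pi(\phi)^m=1$, the element $\pi(\phi)$ has finite order, and the image of $\rho$ in $PSL(2,\mathbb R)_k$ is the finite cyclic group $\langle\pi(\phi)\rangle$.

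For the converse, I would suppose $\bar\rho(\pi_1(\mathcal{B}))=\langle\pi(\tilde\alpha)\rangle$ is finite cyclic of order $N$. Since finite-order elements of $PSL(2,\mathbb R)_k$ are elliptic and an elliptic element has an elliptic lift, one can choose $\tilde\alpha$ elliptic. The intersection of $\rho(\pi_1(M))$ with $\ker\pi=\langle\hbox{sh}(1)\rangle$ contains $\hbox{sh}(1)$ and so equals $\langle\hbox{sh}(1)\rangle$, while the corresponding quotient is $\langle\pi(\tilde\alpha)\rangle$; hence $\rho(\pi_1(M))=\langle\tilde\alpha,\hbox{sh}(1)\rangle$, which is abelian because $\hbox{sh}(1)$ is central in $\widetilde{PSL}(2,\mathbb R)_k$. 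Conjugating inside $\widetilde{PSL}(2,\mathbb R)_k$ to put $\tilde\alpha$ into translation form (possible since $\tilde\alpha$ is elliptic, and harmless to $\hbox{sh}(1)$ by centrality) realises $\rho(\pi_1(M))$ as a group of translations; the relation $\tilde\alpha^N\in\langle\hbox{sh}(1)\rangle$ forces the translation amount of $\tilde\alpha$ to be rational, so this group of translations is discrete and hence infinite cyclic. Its action on $\mathbb R$ is then free and cocompact, making the leaf space $S^1$ and $\mathcal{F}(\rho)$ a fibration of the compact manifold $M$ over $S^1$ with compact fibre.

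The main technical obstacle is the discreteness argument in the converse: one has to perform the conjugation putting $\tilde\alpha$ into translation form without disturbing $\hbox{sh}(1)$, and then extract rationality of the translation amount from $\tilde\alpha^N\in\langle\hbox{sh}(1)\rangle$. Once these points are in place, everything else follows from the basic identification of the leaf space of $\mathcal{F}(\rho)$ with $\mathbb R/\rho(\pi_1(M))$ together with the standard fact that fixed-point-free elements of $\widetilde{PSL}(2,\mathbb R)_k$ are precisely the elliptic ones.
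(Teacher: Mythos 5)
Your proof is correct and takes a genuinely different route from the paper's. The paper argues via the stabilizers $\hbox{Stab}_\rho(t)\leq\pi_1(M)$ of points of $\mathbb R$: for the implication ``fibration $\Rightarrow$ finite cyclic image'' it observes that each $\varphi(\hbox{Stab}_\rho(t))$ has the same index $\chi(F)/\chi(\mathcal{B})$ in $\pi_1(\mathcal{B})$, that this finite collection of finite-index subgroups is invariant under conjugation, and concludes that $\hbox{Stab}_\rho(t)=\ker\rho$ for every $t$, so the image of $\bar\rho$ is finite; for the converse it shows $\ker(\psi\circ\rho)$ has finite index in $\pi_1(M)$, giving compact leaves, and uses the all-elliptic image to make $\hbox{Stab}_\rho(t)$ independent of $t$. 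You instead identify the leaf space of $\mathcal{F}(\rho)$ with $\mathbb R/\rho(\pi_1(M))$ and translate ``fibration with compact fibre'' into ``$\rho(\pi_1(M))$ acts freely, properly discontinuously and cocompactly on $\mathbb R$'', then work entirely inside $\widetilde{PSL}(2,\mathbb R)_k$. This is cleaner --- it avoids the Euler characteristic computation --- though it quietly invokes the standard fact (Reeb stability) that a compact-leaved, codimension-one, boundary-transverse foliation of a compact manifold whose leaf space is an honest circle is a fibre bundle; that is the price of the translation, and it would be worth a sentence.

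One claim in your forward direction is false and should be excised: parabolic and hyperbolic elements of $\widetilde{PSL}(2,\mathbb R)_k$ do not in general have fixed points on $\mathbb R$. As the paper itself records, their translation numbers lie in $\frac{1}{k}\mathbb Z$, and any such element with nonzero translation number is fixed-point free. So ``$\phi$ acts without fixed points, hence $\phi$ is elliptic'' does not follow from a fixed-point dichotomy. Fortunately the sentence is logically idle: your next step deduces $\hbox{sh}(1)=\phi^m$, whence $\pi(\phi)^m=1$, which already gives the finite cyclic conclusion; and ellipticity of $\phi$ can then be recovered from that (finite-order elements of $PSL(2,\mathbb R)_k$ are elliptic). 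The argument is sound once the offending sentence is deleted or replaced by this correct deduction. Finally, note that the place you flagged as the ``main technical obstacle'' (putting $\tilde\alpha$ into translation form and extracting rationality from $\tilde\alpha^N\in\langle\hbox{sh}(1)\rangle$) is actually straightforward exactly as you wrote it; the real care is needed in the leaf-space reduction and in the fixed-point claim above.
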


\begin{proof}
For each $t \in \mathbb R$, let $\hbox{Stab}_\rho(t)$ be the stabiliser of $t$ under the action of $\pi_1(M)$ on $\mathbb R$ determined by $\rho$ and $\hbox{Stab}_\rho^*(t)$ the subgroup of $\pi_1(M)$  generated by $\hbox{Stab}_\rho(t)$ and $\langle h \rangle$. Then $\hbox{Stab}_\rho^*(t)$ is the stabilizer of the image of $t$ in the circle under the action of $\pi_1(M)$ determined by the composition of $\rho$ with the cover $\psi: \widetilde{PSL}(2, \mathbb{R})_k \to PSL(2, \mathbb{R})_k$. Clearly there is a homomorphism $\bar \rho: \pi_1(\mathcal{B}) \to PSL(2, \mathbb{R})_k$ such that $\psi \circ \rho = \bar \rho \circ \varphi$. 

 Since $\rho(h) = \hbox{sh}(1)$, $\hbox{Stab}_\rho(t) \cap \langle h \rangle = \{1\}$, and therefore $\varphi|\hbox{Stab}_\rho(t)$ is one-to-one. In particular, $\varphi(\hbox{Stab}_\rho(t))$ is torsion free, so $X/\varphi(\hbox{Stab}_\rho(t))$ is a surface. By construction, $X/\varphi(\hbox{Stab}_\rho(t)) \times \{t\} = (X \times \{t\})/\hbox{Stab}_\rho(t)$ embeds in $M = X \times_\rho \mathbb R$. 

If the image of $\psi \circ \rho$ is a finite cyclic group, then the kernel of this composition, which is contained in $\hbox{Stab}_\rho^*(t)$, is of finite index in $\pi_1(M)$. Thus $X/\varphi(\hbox{Stab}_\rho(t))$ is a compact surface, call it $F$. Further, the image of $\rho$ consists of elliptics, so $\hbox{Stab}_\rho(t)$ is independent of $t$. Thus $\mathcal{F}(\rho)$ is a foliation by horizontal surfaces, each homeomorphic to $F$. It follows that $\mathcal{F}(\rho)$ is a fibration.

Conversely, suppose that $\mathcal{F}(\rho)$ is a fibration with fibre $F$. Then for each $t \in \mathbb R$, $\varphi(\hbox{Stab}_\rho(t))$ is a subgroup of $\pi_1(\mathcal{B})$ whose index $d = \chi(F)/\chi(\mathcal{B})$ is independent of $t$. Thus there are only finitely many possibilities for $\varphi(\hbox{Stab}_\rho(t))$. 

Consider a conjugate of $\varphi(\hbox{Stab}_\rho(t))$ in $\pi_1(\mathcal{B})$, say $x \varphi(\hbox{Stab}_\rho(t)) x^{-1}$ where $x \in \pi_1(\mathcal{B})$. Fix $\xi \in \pi_1(M)$ which projects to $x$. Then $\hbox{Stab}_\rho(\rho(\xi)(t)) = \xi \hbox{Stab}_\rho(t) \xi^{-1}$, and therefore  $\varphi(\hbox{Stab}_\rho(\rho(\xi)(t)))$ $= x \varphi(\hbox{Stab}_\rho(t)) x^{-1}$, so the set of images in $\pi_1(\mathcal{B})$ of stabilisers of points of $\mathbb R$ is invariant under conjugation. As they each have fixed index $d$ in $\pi_1(\mathcal{B})$, their intersection is a finite index normal subgroup of $\pi_1(\mathcal{B})$. Clearly this intersection is the kernel of $\rho$. But then if $\xi \in \hbox{Stab}_\rho(t)$, there is some $n \geq 1$ such that $\xi^n \in \hbox{ker}(\rho)$. It follows that $\xi \in  \hbox{ker}(\rho)$ (i.e. $\widetilde{PSL}(2, \mathbb{R})_k$ is torsion free). Thus $\hbox{Stab}_\rho(t) = \hbox{ker}(\rho)$ for all $t$. In particular, $\varphi(\hbox{ker}(\rho))$ has index $d$ in $\pi_1(\mathcal{B})$. But since $(\bar \rho \circ \varphi)(\hbox{ker}(\rho)) =  (\psi \circ \rho)(\hbox{ker}(\rho)) = \{1\}$, the image of $\bar \rho$ is a finite subgroup of $PSL(2,\mathbb R)_k$, and therefore finite cyclic.
\end{proof}

\begin{lemma} \label{l:multiple boundary components}
Suppose that $\rho: \pi_1(M) \rightarrow \widetilde{PSL}(2, \mathbb{R})_k$ is a representation satisfying $\rho(h) = \hbox{sh}(1)$ such that $\rho$ has nonabelian image or there is some $\gamma \in \pi_1(M)$ such that $\rho(\gamma)$ has irrational translation number. Given $T \subset \partial M$, if $\rho|{\pi_1(T)}$ consists of translations by rational numbers, then the number of components of $L \cap T$ is infinite for the generic leaf $L$ of $\mathcal{F}(\rho)$.
\end{lemma}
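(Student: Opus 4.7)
The plan is to count the components of $L \cap T$ as a group-theoretic index and then show this index is infinite by contradicting Lemma \ref{l: condition for fibring}. Set $H = \rho(\pi_1(M))$ and $A = \rho(\pi_1(T))$. Since $\rho(h) = \hbox{sh}(1)$ and $\rho$ sends $\pi_1(T)$ to rational translations, $A$ is the cyclic group generated by $\hbox{sh}(1/q)$ for some integer $q \geq 1$. Denote by $\psi \colon \widetilde{PSL}(2,\mathbb{R})_k \to PSL(2,\mathbb{R})_k$ the covering projection, so $\ker \psi = \langle \hbox{sh}(1) \rangle$.

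First I would identify the components of $L_{[t]} \cap T$ using the universal cover $X \times \mathbb{R}$. The preimage of $T$ is a disjoint union of strips, one of which has the form $\ell \times \mathbb{R}$, where $\ell \subset X$ is a lift of the boundary component of $\mathcal{B}$ meeting $T$ and where $\pi_1(T)$ is the $\pi_1(M)$-stabilizer of this strip. A horizontal plane $X \times \{s\}$ meets $\ell \times \mathbb{R}$ in the line $\ell \times \{s\}$; two such lines project to the same circle of $\mathcal{F}(\rho) \cap T$ iff their $s$-coordinates lie in the same $A$-orbit in $\mathbb{R}$, and two such circles lie in the same global leaf $L_{[t]}$ iff their $s$-coordinates lie in the same $H$-orbit. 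Hence the components of $L_{[t]} \cap T$ are in bijection with the double coset set $A \backslash H / \hbox{Stab}_H(t)$. A nonidentity element of $\widetilde{PSL}(2,\mathbb{R})_k$ that fixes some point of $\mathbb{R}$ lifts a nontrivial M\"obius transformation and therefore has discrete fixed-point set, so $\{t \in \mathbb{R} : \hbox{Stab}_H(t) \ne 1\}$ is a countable union of discrete sets. For $t$ outside this meager set, $|L_{[t]} \cap T| = [H:A]$, and the lemma reduces to showing $[H:A] = \infty$.

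I would close by arguing by contradiction. If $[H : A] < \infty$, then $\psi(A)$ is finite cyclic of order $q$, and $[\psi(H) : \psi(A)] \leq [H:A] < \infty$, so $\psi(H)$ is a finite subgroup of $PSL(2,\mathbb{R})_k$. Since a maximal compact subgroup of $PSL(2,\mathbb{R})_k$ is a circle, $\psi(H)$ is finite cyclic, and Lemma \ref{l: condition for fibring} then forces $\mathcal{F}(\rho)$ to be a fibration. But then for every $\gamma \in \pi_1(M)$, $\psi(\rho(\gamma))$ has some finite order $n$, so $\rho(\gamma)^n \in \ker \psi = \langle \hbox{sh}(1) \rangle$ and $\rho(\gamma)$ has rational translation number; simultaneously, $H$ is a central extension of a finite cyclic group by $\langle \hbox{sh}(1) \rangle \subset Z(\widetilde{PSL}(2,\mathbb{R})_k)$ and so is abelian. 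Both conclusions directly contradict the hypothesis. The main subtlety I anticipate is justifying the double coset description carefully, in particular verifying that distinct double cosets yield distinct connected components of $L \cap T$ rather than collapsing further under the quotient.
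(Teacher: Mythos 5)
Your proof is correct, and it takes a genuinely different route from the paper's. The paper argues dynamically: since the hypothesis guarantees (via \cite[Theorem 4.3.7]{Bn} in the nonabelian case) a $\gamma$ whose $\rho$-image $f$ is parabolic, hyperbolic, or an irrational rotation, one picks a boundary lift $K_0$ and traces the translates $K_j = \varphi(\gamma)^{-j}(K_0)$ to show that the leaf through a generic $t$ contains the circles $C_{f^j(t)}$, which are pairwise distinct because $\{f^j(t)\}$ either converges to a fixed point or is dense in $\mathbb{R}/\mathbb{Z}$. Your proof instead identifies the component count as the double coset space $A\backslash H/\mathrm{Stab}_H(t)$, observes that the generic stabilizer is trivial since non-elliptic non-central elements have discrete fixed sets, and then runs a uniform contradiction: if $[H:A]<\infty$ then $\psi(H)$ is finite cyclic, forcing both rational translation numbers everywhere and abelianity of $H$, contradicting either branch of the hypothesis. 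Your approach is more structural and handles the two hypotheses in one stroke rather than splitting into cases; it also makes explicit exactly what would go wrong (finite index of $A$ in $H$), which is a useful piece of information. The paper's version is more concrete in that it exhibits the accumulating family of circles geometrically, which is in the same spirit as the holonomy-insertion arguments used later in \S\ref{sec: operations}. One small stylistic remark: your appeal to Lemma \ref{l: condition for fibring} is not actually load-bearing --- the contradiction you derive (rational translation numbers, abelian image) comes directly from $\psi(H)$ being finite cyclic, not from the consequent fibration --- so the argument would be cleaner to state without invoking that lemma. The subtlety you flagged, that distinct double cosets yield distinct components, is indeed the one place requiring care, but your bijection is correct: after translating every line over $L_{[t]}\cap T$ back to the distinguished strip $\ell_0\times\mathbb{R}$, two lines $\ell_0\times\{s'\}$ and $\ell_0\times\{s''\}$ lie in the same $\pi_1(M)$-orbit precisely when some element of $\pi_1(T)$ carries $s'$ to $s''$, which is the $A$-orbit relation, and passing from $s\in Ht$ to the unique $h\in H/\mathrm{Stab}_H(t)$ with $h\cdot t = s$ converts $A$-orbits into the asserted double cosets.
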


\begin{proof}
First suppose that $\rho$ has nonabelian image. A subgroup of $\widetilde{PSL}(2, \mathbb{R})_k$ consisting of elliptics is conjugate in $\widetilde{PSL}(2, \mathbb{R})_k$ into the group of translations. (See \cite[Theorem 4.3.7]{Bn}.) Thus as the image of $\rho$ is non-abelian, there is a $\gamma \in \pi_1(M)$ such that $\rho(\gamma)$ is parabolic or hyperbolic. Denote by $f$ the associated element of $\hbox{Diff}_+(\mathbb R)$. Then $f$ has finitely many fixed points in $\mathbb R / \mathbb Z$ and if $t \in \mathbb R$ is not such a fixed point, the sequence $\{f^j(t)\}$ converges in $\mathbb R / \mathbb Z$ to one of them.

Fix a boundary component $K_0$ of $X$ whose image in $\mathcal{B}$ is contained in the image of $T$ under the quotient $M \to \mathcal{B}$. For each $t \in \mathbb R$, the image of $K_0 \times \{t\}$ is contained in $T$ and is a boundary component of a leaf of $\mathcal{F}(\rho)$. Hence it is a circle $C_t$. There is an integer $m \geq 1$ (depending on the slope of $C_t$ on $T$) such that $C_t = C_{t'}$ if and only if $m(t - t') \equiv 0$ (mod $1$).

For $j \in \mathbb Z$ let $K_j = \varphi(\gamma)^{-j}(K_0)$ be the image of $K_0$ under the action by deck transformations of $\pi_1(\mathcal{B}) = \pi_1(M)/\langle h \rangle$ on $X$. For each $t \in \mathbb R$ and $j \in \mathbb Z$, $K_j \times \{t\}$ is contained in $X \times \{t\}$ and so their images in $M$ are contained in a leaf $L_t$ of $\mathcal{F}(\rho)$. On the other hand, since $\gamma^j(K_j \times \{t\}) = K_0 \times \{f^j(t)\}$, $L_t$ contains $C_{f^j(t)}$ for all $j$. We noted above that for generic $t$, the sequence $\{f^j(t)\}$ converges in $\mathbb R / \mathbb Z$. We also noted that $C_t = C_{t'}$ if and only if $m(t - t') \equiv 0$ (mod $1$). It follows that for generic $t$, the number of components of $L_t \cap T$ is infinite.

A similar proof works when there is some $\gamma \in \pi_1(M)$ such that $f = \rho(\gamma)$ has irrational translation number. For in this case, the sequence $\{f^j(t)\}$ is dense in $\mathbb R / \mathbb Z$ for each $t \in \mathbb R$. 
\end{proof}

\section{Adding holonomy} \label{sec: operations}

Throughout this section $M$ will denote a piece of graph manifold rational homology solid torus (cf. \S \ref{pieces}) with boundary components $T_1, \ldots, T_r$ and base orbifold $\mathcal{B}$. We use $X \to \mathcal{B}$ to denote a universal cover. 

\subsection{Interval hyperbolic foliations} 

For each positive integer $k$ let $IH^\infty(k)$ be the set of orientation-preserving diffeomorphisms $f: S^1 \to S^1$ whose fixed point set consists of $2k$ disjoint closed non-degenerate intervals and for which $f$ is alternately increasing or decreasing on the $2k$ complementary open intervals.  

For a positive integer $k$ we say that a codimension one foliation $\mathcal{F}$ on a torus $T$ is (smoothly) {\it $k$ interval-hyperbolic} if it is diffeomorphic to the suspension foliation $\mathcal{F}(f)$ on $T(f)$ of a homeomorphism $f \in IH^\infty(k)$. 

The goal of this section is to prove the following proposition whose proof follows immediately from Propositions \ref{elliptic to ih}, \ref{compact 2}, and \ref{compact 3}. 

\begin{proposition} \label{fixing the pieces}
Let $M$ be a Seifert manifold as in \S \ref{pieces} which admits a smooth, co-oriented taut foliation $\mathcal{F}$. Suppose, additionally, that if $\mathcal{F}$ has a compact horizontal leaf $F$ then $|\partial M| +  |\partial F| \geq 3$. Then given an odd integer $k \gg 0$, there is a smooth co-oriented taut foliation $\mathcal{F}'$ on $M$ which detects $[\alpha_*(\mathcal{F})]$ such that for $1 \leq i \leq r$ the foliation $\mathcal{F}'|T_{i}$ is $k$-interval hyperbolic whenever $[\alpha_i(\mathcal{F})]$ is rational.  
\qed
\end{proposition}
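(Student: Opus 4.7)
The plan is to reduce the statement to three mutually exclusive cases matching the three propositions \ref{elliptic to ih}, \ref{compact 2}, and \ref{compact 3} that the final line of the statement alludes to. After isotopy, the given smooth co-oriented taut foliation $\mathcal{F}$ on the Seifert manifold $M$ is either horizontal (in which case it is induced by a representation $\rho:\pi_1(M)\to\widetilde{PSL}(2,\mathbb R)_k$ with $\rho(h)=\mathrm{sh}(1)$, as in the setup preceding Lemma \ref{l: condition for fibring}), or it carries a compact horizontal leaf $F$, and in the latter case the hypothesis of the proposition ensures $|\partial M|+|\partial F|\geq 3$.

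First I would handle the case where $\mathcal{F}$ has no compact horizontal leaf, so $\mathcal{F}=\mathcal{F}(\rho)$. By Lemma \ref{l: condition for fibring}, the image of $\rho$ in $PSL(2,\mathbb R)_k$ is not finite cyclic, so either $\rho$ has nonabelian image or some $\rho(\gamma)$ has irrational translation number. Let $T_i$ be a boundary torus on which $[\alpha_i(\mathcal{F})]$ is rational; then $\rho|_{\pi_1(T_i)}$ consists of translations by rationals, and Lemma \ref{l:multiple boundary components} applies to give that the generic leaf of $\mathcal{F}(\rho)$ meets $T_i$ in infinitely many circles. This infinite family of boundary circles is exactly what is needed to perform the local surgery of Proposition \ref{elliptic to ih}: one replaces the rotation holonomy along $T_i$ by an interval-hyperbolic one by blowing up $2k$ carefully chosen circles of $\mathcal{F}(\rho)\cap T_i$ into alternating attracting and repelling intervals, gluing in a standard local model supported in a collar of $T_i$. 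Smoothness, tautness, co-orientability and the boundary slope $[\alpha_i(\mathcal{F})]$ are all preserved; the odd parity of $k$ is what makes the alternating pattern compatible with the co-orientation.

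Next, when $\mathcal{F}$ has a compact horizontal leaf $F$, the foliation is modelled near $F$ on a fibration, and the constraint $|\partial M|+|\partial F|\geq 3$ furnishes the topological room needed to bend the pages so as to create interval-hyperbolic boundary holonomy. The argument splits by whether $\partial F=\emptyset$ (handled by one of Propositions \ref{compact 2} and \ref{compact 3}, where the condition $|\partial M|\geq 3$ provides the extra boundary tori to spin into) or $\partial F\neq\emptyset$ (handled by the other, where the extra boundary components of $F$ provide room along which to insert Reeb-style bending). In either subcase the construction produces a smooth taut foliation $\mathcal{F}'$ with the same detected slope on each $T_i$ and boundary holonomy in $IH^\infty(k)$ for every $i$ with $[\alpha_i(\mathcal{F})]$ rational.

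The main obstacle I expect, and the reason the proof is distributed across three propositions, is the \emph{simultaneous} control of four requirements: smoothness, tautness, exact preservation of the detected slope, and boundary holonomy lying strictly inside $IH^\infty(k)$ (rather than merely $C^0$-close to it). The abundance of boundary components of leaves in Case A (Lemma \ref{l:multiple boundary components}) and the combinatorial inequality $|\partial M|+|\partial F|\geq 3$ in Case B are precisely what allow one to build a single $k$-fold interval-hyperbolic model compatible with the co-orientation; the requirement that $k$ be odd and large enters at this step, to absorb the finitely many exceptional fibres of $M$ and to keep the blow-up compatible with the chosen co-orientation. Granted Propositions \ref{elliptic to ih}, \ref{compact 2}, and \ref{compact 3}, the proposition then follows by applying the appropriate one to each boundary component $T_i$ in turn and checking that the modifications can be carried out independently in disjoint collars.
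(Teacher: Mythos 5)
Your overall strategy---partitioning into cases matched to Propositions \ref{elliptic to ih}, \ref{compact 2}, and \ref{compact 3}, which is exactly the route the paper indicates---is the right one, but your trichotomy is not the correct one and leaves the vertical case unhandled.

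For a co-oriented taut foliation on a Seifert fibred manifold of the kind in \S\ref{pieces}, the governing dichotomy is: after isotopy, $\mathcal{F}$ is either horizontal (transverse to all Seifert fibres) or it contains a compact \emph{vertical} leaf, which for these manifolds is a vertical annulus, forcing $v([\alpha_*(\mathcal{F})])>0$. Your opening claim, ``$\mathcal{F}$ has no compact horizontal leaf, so $\mathcal{F}=\mathcal{F}(\rho)$,'' is therefore false: a foliation containing a vertical annulus leaf has no compact horizontal leaf, yet is not of the form $\mathcal{F}(\rho)$. The correct three-way split is: (i) $\mathcal{F}$ horizontal and not a fibration, where Lemmas \ref{l: condition for fibring} and \ref{l:multiple boundary components} feed into Proposition \ref{elliptic to ih}; (ii) $\mathcal{F}$ horizontal with a compact (hence horizontal) leaf $F$, so that the hypothesis supplies $|\partial M|+|\partial F|\geq 3$ and Proposition \ref{compact 2} applies; (iii) $\mathcal{F}$ not horizontal, so $v([\alpha_*(\mathcal{F})])>0$ and Proposition \ref{compact 3} applies.

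Your proposed sub-split of the compact-leaf case by whether $\partial F=\emptyset$ does not make sense: a compact horizontal leaf of a taut foliation transverse to the non-empty boundary of a Seifert manifold must meet $\partial M$, so $\partial F$ is never empty. That artificial sub-split was where you tried to locate the role of Proposition \ref{compact 3}; its actual role is case (iii), the vertical case, which your decomposition omits entirely. The rest of your sketch of case (i) is in the right spirit, though the odd-$k$ constraint is tied less to co-orientability per se and more to the parity obstruction that surfaces in the proof of Proposition \ref{compact 2} when $|\partial M|+|\partial F|=3$.
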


\subsection{Adding holonomy in the horizontal non-fibred case} \label{non-fibred case}
Suppose that $[\alpha_*]$ is a horizontal element of $\mathcal{S}(M)$ and $\rho: \pi_1(M) \to \widetilde{PSL}(2, \mathbb{R})_k$ such that $\rho(h) = \hbox{sh}(1)$ and $[\alpha_*]$ is $\mathcal{F}(\rho)$-detected. (So the base orbifold of $M$ is orientable by \cite[Proposition 6.5]{BC}.) According to \cite[Proposition A.8]{BC}, we can assume that $\mathcal{F}(\rho)$ is either elliptic or hyperbolic on each component of $\partial M$. Assume as well that $\mathcal{F}(\rho)$ is not a fibration. According to Lemma \ref{l: condition for fibring}, the image of $\rho$ is either 
\vspace{-.2cm} 
\begin{itemize}

\item non-abelian, or 

\vspace{.2cm} \item consists of elliptics at least one of which has irrational translation number, or 

\vspace{.2cm} \item consists of hyperbolics and elements of the centre of $\widetilde{PSL}(2, \mathbb{R})_k$. 

\end{itemize}

\begin{proposition} \label{elliptic to ih}
Let $M$ be a Seifert manifold with non-empty boundary and suppose that $\rho : \pi_1(M) \rightarrow \widetilde{PSL}(2, \mathbb{R})_j$ is as above and that $\mathcal{F}(\rho)$ is not a fibration. Then for each odd $k \geq 1$ and every sequence $1 \leq i_1 < i_2 < \ldots < i_s \leq r$ such that $\mathcal{F}|{T_{i_l}}$ is elliptic of rational slope for $1 \leq l \leq s$, there exists a smooth, co-oriented, horizontal  foliation $\mathcal{F}'$ on $M$ such that $[\alpha_*(\mathcal{F}')] = [\alpha_*(\mathcal{F}(\rho))]$, $\mathcal{F}'|{T_{i_l}}$ is $k$-interval hyperbolic for $1 \leq l \leq s$, and $\mathcal{F}'|{T_{i}} = \mathcal{F}|{T_{i}}$ for $i \not \in \{i_1, i_2, \ldots, i_s\}$. 
\end{proposition}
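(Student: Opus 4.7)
The plan is to modify $\mathcal{F}(\rho)$ by a surgery supported in pairwise disjoint collar neighborhoods of the boundary components $T_{i_1}, \ldots, T_{i_s}$, replacing the linear rational-slope boundary foliation on each $T_{i_l}$ by a $k$-interval hyperbolic one while leaving $\mathcal{F}(\rho)$ unchanged outside these collars. Confining the modification in this way automatically gives $\mathcal{F}'|T_i = \mathcal{F}|T_i$ for $i \notin \{i_1, \ldots, i_s\}$; since both the original and the modified foliation will be horizontal and the closed circle leaves (respectively the fixed intervals) on each $T_{i_l}$ all carry the same rational slope, the slope-detection condition $[\alpha_*(\mathcal{F}')] = [\alpha_*(\mathcal{F}(\rho))]$ will hold by construction.

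Concretely, I would fix pairwise disjoint collars $U_{i_l} \cong T_{i_l} \times [0,1]$, with $T_{i_l} = T_{i_l} \times \{0\}$, thin enough to avoid the other boundary components. On each $U_{i_l}$ the foliation $\mathcal{F}(\rho)$ has a suspension structure transverse to the Seifert fibres, and the goal is to build a smooth horizontal foliation $\mathcal{F}'_l$ on $U_{i_l}$ that agrees with $\mathcal{F}(\rho)$ along the inner face $T_{i_l} \times \{1\}$ but whose trace on $T_{i_l}$ is $k$-interval hyperbolic. To do this, I would select $2k$ distinct closed circle leaves of $\mathcal{F}(\rho)|T_{i_l}$ and, within $U_{i_l}$, thicken each into a narrow annular slab foliated by parallel copies of that rational-slope circle; these slabs will become the $2k$ fixed intervals of some $f \in IH^\infty(k)$ on $T_{i_l}$. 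Between consecutive slabs I would insert a spiraling region whose holonomy produces alternately attracting and repelling behavior, realizing the alternating increase/decrease of $f$ on the $2k$ complementary open arcs. In the $[0,1]$-direction of $U_{i_l}$, the foliation would interpolate via a $C^\infty$ homotopy of circle diffeomorphisms, from $f$ at the outer face back to the original linear rotation at $T_{i_l} \times \{1\}$, arranged so that the resulting suspension stays transverse to the Seifert fibres. The oddness of $k$ is used to ensure that the cyclic alternation of attracting and repelling around the $2k$ gaps is consistent with the chosen co-orientation.

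The main obstacle will be verifying that the inserted spiraling region actually admits a smooth horizontal interpolation of the required kind, i.e.\ that the suspension of the homotopy of circle diffeomorphisms can be realized by a smooth foliation transverse to the Seifert fibres throughout $U_{i_l}$ without accidentally producing a leaf tangent to a fibre. This is precisely where the hypothesis that $\mathcal{F}(\rho)$ is not a fibration is used: because the existence of some $T_{i_l}$ on which $\mathcal{F}(\rho)$ is elliptic forces the image of $\rho$ to contain elliptic elements, the third bullet-point case (hyperbolics plus centre) is ruled out, and we are in one of the first two cases. Lemma \ref{l:multiple boundary components} then applies and shows that the generic leaf of $\mathcal{F}(\rho)$ meets $T_{i_l}$ in infinitely many components, giving the dynamical abundance needed to perform the spiraling interpolation without obstruction. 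Smoothness at the interface $T_{i_l}\times\{1\}$ is handled by standard bump-function matching of all derivatives, and since $\mathcal{F}'$ remains horizontal it is automatically taut.
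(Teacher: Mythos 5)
The proposal is based on a fundamentally flawed construction. You propose to modify $\mathcal{F}(\rho)$ only inside pairwise disjoint collar neighborhoods $U_{i_l}\cong T_{i_l}\times[0,1]$ so that the new foliation agrees with $\mathcal{F}(\rho)$ along $T_{i_l}\times\{1\}$ (where it is linear) and is $k$-interval hyperbolic along $T_{i_l}\times\{0\}$. But this cannot produce a \emph{horizontal} foliation. If the collar is chosen fibre-saturated, it is a trivial $S^1$-bundle over the annulus $S^1\times[0,1]$, and a foliation transverse to the Seifert fibres on this collar is the suspension of the holonomy of a \emph{single} circle diffeomorphism (the holonomy around the core of the annulus, since $\pi_1$ of the annulus is $\mathbb Z$). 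Consequently the traces on $T_{i_l}\times\{0\}$ and $T_{i_l}\times\{1\}$ are forced to be conjugate suspension foliations; there is no horizontal foliation on a collar whose trace is a rotation on one face and interval-hyperbolic on the other. Your proposed ``$C^\infty$ homotopy of circle diffeomorphisms, arranged so that the resulting suspension stays transverse to the Seifert fibres'' is precisely the object that this holonomy constraint rules out.

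The paper avoids this obstruction by working \emph{outside} any boundary collar: one picks a non-compact leaf $L_1$ of $\mathcal{F}(\rho)$ and a properly embedded arc $A$ in $L_1$ running between two distinct components of $L_1\cap T_{i_l}$, then performs Gabai's cut-and-reglue surgery along the vertical surface $A\times I$, regluing by a boundary-flat diffeomorphism of the interval. This preserves smoothness and horizontality precisely because the surface $A\times I$ extends from the boundary deep into $M$; the resulting change in holonomy does \emph{not} factor through a collar. This is also where Lemma~\ref{l:multiple boundary components} is actually used: it guarantees that a generic leaf meets $T_{i_l}$ in infinitely many circles, so that the arcs $A$ (and the annular regions $B_0,B_1$ of $T_{i_l}$ that they hit) can be chosen to create exactly the $k$-interval hyperbolic pattern required. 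Your appeal to the lemma as providing ``dynamical abundance'' for a collar interpolation misidentifies its role. Note also that your ``thicken each circle leaf into a slab of parallel circles'' step is the leaf-thickening operation, which the paper's Addendum explicitly warns does \emph{not} preserve smoothness; smoothness of $\mathcal{F}'$ is part of the conclusion of the proposition, so this step would be fatal even if the collar constraint could be circumvented.
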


\begin{proof}
Without loss of generality, we may suppose that $i_l = l$ for each $l$. 

From above we know that the image of $\rho$ is either non-abelian or consists of elliptics at least one of which has irrational translation number. Hence Lemma \ref{l:multiple boundary components} implies that for $1 \leq i \leq s$, the number of components of $L \cap T_i$ is infinite for the generic leaf $L$ of $\mathcal{F}(\rho)$. Fix disjoint leaves $L_1, L_2, \ldots , L_{s}$ such that for $1 \leq l \leq s$, $L_l \cap T_{l}$ consists of infinitely many circles of slope $[\alpha_{l}(\mathcal{F}(\rho))]$.

Consider a smooth arc $A$ properly embedded in $L_1$ which runs between distinct  components of $L_1 \cap T_1$. Since $\mathcal{F}(\rho)$ is horizontal, $A$ can be extended to a smooth vertical surface $A \times I$, 
where $A = A \times \{\frac12\}$, $A \times \{t\}$ is contained in a leaf of $\mathcal{F}(\rho)$ for each $t \in I$, and $\{x\} \times I$ is contained in a Seifert fibre of $M$ for each $x \in A$. We can assume, moreover, that there are disjoint annular regions $B_0, B_1$ of $T_1$, each foliated by circles of $\mathcal{F}(\rho) \cap T_1$ and each containing one of the components $a$ of $(A \times I) \cap T_1$ as a properly embedded arc.

Fix an orientation-preserving diffeomorphism $h$ of the unit interval $I$ which is strictly increasing on $(0,1)$ and 
$$h^{(n)}(0) = h^{(n)}(1) = \left\{ \begin{array}{ll} 1 & \hbox{ if } n = 1 \\ 0 & \hbox{ if } n > 1 \end{array} \right.$$
Let $f$ be one of the diffeomorphisms $h$ or $h^{-1}$ and cut $M$ open along $A \times I$ then reglue according to the rule $(x,t) \mapsto (f(x), t)$ for $(x, t) \in A \times I$ (cf.~\cite[Operation 2.2]{Ga}). This operation transforms $\mathcal{F}(\rho)$ into a new (smoothable) horizontal foliation $\mathcal{F}_1$ on $M$ which can be assumed to differ from $\mathcal{F}(\rho)$ only in an arbitrarily small neighbourhood of $A \times I$. Further, its restrictions to $B_0$ and $B_1$ are suspensions of $h$ or $h^{-1}$. By performing this operation on an appropriately chosen finite family of disjoint, smooth, properly embedded arcs in $L_1$, we can produce a smooth foliation $\mathcal{F}_1$ on $M$ such that $[\alpha_*(\mathcal{F}_1)] = [\alpha_*(\mathcal{F}(\rho))]$, $\mathcal{F}'|{T_1}$ is $k$-interval hyperbolic, and $\mathcal{F}'|{T_{i}} = \mathcal{F}|{T_{i}}$ for $2 \leq i \leq r$. Choosing appropriate arcs in $L_2, \ldots, L_s$ and performing similar modifications to $\mathcal{F}$ in disjoint neighbourhoods of those arcs completes the proof. 
\end{proof}

\begin{addendum} \label{addendum} 
If, in the proposition above, $\mathcal{F}(T_{j_l})$ is $c_l$-hyperbolic for $1 \leq j_1 < j_2 < \ldots < j_t \leq r$, then for each $c'_i \geq c_i$ we can produce a co-oriented horizontal foliation $\mathcal{F}'$ on $M$ such that $[\alpha_*(\mathcal{F}')] = [\alpha_*(\mathcal{F}(\rho))]$, $\mathcal{F}'|{T_{i_l}}$ is $k_l$-interval hyperbolic for $1 \leq l \leq s$, $\mathcal{F'}|{T_{j_l}}$ is $c_l'$-interval hyperbolic for $1 \leq l \leq t$, and $\mathcal{F}'|{T_{i}} = \mathcal{F}|{T_{i}}$ otherwise. Such a foliation cannot be assumed smooth since the proof involves thickening leaves, which does not preserve smoothness (cf. \cite[Lemma 6.10]{BC}). 
\end{addendum}

\subsection{Adding holonomy in the compact leaf case}

In this section we prove Proposition \ref{fixing the pieces} in the case that $M$ admits a smooth foliation with a compact leaf. See Propositions \ref{compact 2} and \ref{compact 3}. First though, we show how to obtain foliations with hyperbolic behaviour on the boundary of $M$. 

\begin{proposition} \label{compact} 
Let $M$ be a Seifert manifold as in \S \ref{pieces} which admits a smooth, co-oriented taut foliation $\mathcal{F}$ with a compact leaf $F$ such that $|\partial M| + |\partial F| \geq 3$. Then there is a smooth co-oriented taut foliation on $M$ which detects $[\alpha_*(\mathcal{F})]$ and is hyperbolic on each component of $\partial M$. 
\end{proposition}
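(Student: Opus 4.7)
The plan is to modify $\mathcal{F}$ by performing Gabai's Operation 2.2 (spinning) along carefully chosen arcs properly embedded in the compact leaf $F$, so that the linear boundary foliations $\mathcal{F}|T_i$ are deformed into hyperbolic ones without changing their slopes. This parallels the construction used in the proof of Proposition \ref{elliptic to ih}, but with $F$ playing the role of the leaves $L_l$ used there.

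First I would record the structural consequences of the hypotheses. Since $F$ is a compact horizontal leaf in a Seifert manifold, $F$ is transverse to the fibration and meets each boundary torus $T_i$ in a nonempty union of parallel circles of slope $[\alpha_i(\mathcal{F})]$; in particular $|\partial F|\geq |\partial M|=r$. After collaring $F$, we can assume $\mathcal{F}|T_i$ is linear of slope $[\alpha_i(\mathcal{F})]$ for each $i$, so the slopes we wish to preserve are encoded in the homology classes of the components of $\partial F$. Any subsequent modification supported in a small neighborhood of a properly embedded arc of $F$ will preserve these slope classes on each $T_i$ but may be used to break linearity of the holonomy.

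The main construction is then a case-by-case choice of spinning arcs $A_i \subset F$:

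\textbf{Case 1.} If $|F \cap T_i| \geq 2$, I would select a smooth properly embedded arc $A_i \subset F$ with endpoints on two distinct components of $F \cap T_i$ and apply Operation 2.2 along $A_i$ using the diffeomorphism $h$ of $I$ (or its inverse) described in the proof of Proposition \ref{elliptic to ih}. The effect on $T_i$ is to introduce attracting/repelling holonomy at the closed leaf corresponding to the endpoints of $A_i$, producing a hyperbolic boundary foliation of the required slope.

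\textbf{Case 2.} If $|F \cap T_i| = 1$, there is no arc in $F$ joining two components of $F \cap T_i$. The hypothesis $|\partial M| + |\partial F| \geq 3$ combined with $|\partial F| \geq |\partial M|$ guarantees that either $|\partial F| \geq 2$ or $|\partial M| \geq 2$ (in fact the only excluded case is $r=|\partial F|=1$), so there exists a second boundary circle of $F$ lying on some $T_j$. I would take $A_i$ to be an arc in $F$ from the single circle of $F\cap T_i$ to a circle of $F\cap T_j$ and spin along it; this contributes hyperbolic holonomy on $T_i$ (and a compensating perturbation on $T_j$).

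Choosing regular neighborhoods of the $A_i$ to be pairwise disjoint allows all these operations to be carried out simultaneously, yielding a single foliation $\mathcal{F}'$ on $M$ that is smooth (the spinning diffeomorphisms and the cut-and-paste are smooth, as in Proposition \ref{elliptic to ih}), co-oriented, still taut (tautness is preserved by Operation 2.2 since holonomy is added along arcs in leaves), has the same boundary slopes $[\alpha_i(\mathcal{F})]$, and whose restriction to each $T_i$ is hyperbolic.

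The main obstacle is handling Case 2 cleanly: when $A_i$ traverses $F$ from $T_i$ to $T_j$, the spinning operation affects the holonomy on $T_j$ as well, and one must ensure that the sequence of spins realizing hyperbolic holonomy on every $T_i$ is consistent, rather than fighting itself. I would address this by treating the spinning operations as local perturbations of the already linear holonomy on each $T_i$: since a single additional attracting/repelling closed leaf at the existing slope suffices to make $\mathcal{F}'|T_i$ hyperbolic, and since one may choose the spinning diffeomorphism supported arbitrarily close to the endpoints of $A_i$, the contributions on $T_j$ from an arc $A_i$ targeting $T_i$ can be taken to be disjoint from (and thus not cancel) contributions from an arc $A_j$ targeting $T_j$. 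The precise role of the hypothesis $|\partial M| + |\partial F| \geq 3$ is exactly to rule out the degenerate configuration in which $M$ fibers over $S^1$ with $\partial F$ a single circle, where no holonomy-altering arc exists.
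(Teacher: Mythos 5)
Your argument handles only the case in which the compact leaf $F$ is horizontal, and in fact you assert this as a consequence of the hypotheses (``Since $F$ is a compact horizontal leaf in a Seifert manifold, $F$ is transverse to the fibration...''). That assertion is not justified: the hypotheses allow $F$ to be a vertical leaf, i.e.\ an essential annulus saturated by Seifert fibres. This happens precisely when some $[\alpha_i(\mathcal{F})]$ equals the fibre slope $[h]$, and it is not ruled out by $|\partial M|+|\partial F|\geq 3$ (if $F$ is a vertical annulus then $|\partial F|=2$ and the inequality is automatic once $\partial M\neq\emptyset$). When $F$ is vertical, $\partial F$ may lie entirely on a single torus $T_i$ on which the detected slope is $[h]$, and no arc in $F$ joins boundary circles with the two different ``hyperbolization'' roles you are assigning; moreover the consequence you invoke --- $|\partial F|\geq|\partial M|$ --- is a feature of horizontal surfaces and fails here. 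The paper's proof treats the two cases separately: when $F$ is horizontal it observes $M$ fibres over $S^1$ with fibre $F$, cuts along $n$ fibres into product blocks $F\times I$, and spins along arcs in $F$ (essentially your construction); when $F$ is vertical it instead cuts $M$ along a finite collection of vertical annuli as in \cite[Proposition 6.5]{BC}, builds a fibration on the cut-open manifold $M'$ that detects the horizontal $[\alpha_i]$, hyperbolizes the boundary of $M'$ by the horizontal argument, and then reglues and spins around the vertical annuli to hyperbolize the remaining components of $\partial M$. You need an argument of this second kind; without it the proof is incomplete.

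In the horizontal case your approach is essentially the same as the paper's (spinning along arcs in a compact leaf \`a la Gabai's Operation 2.2), so once the vertical case is added the proof would match. One minor caution even in the horizontal case: rather than modifying the given $\mathcal{F}$ in place, it is cleaner to use the fact that $M$ fibres over $S^1$ with fibre $F$ and build the new foliation from scratch on the product pieces $F\times I$, since the given $\mathcal{F}$ need not already restrict to a linear foliation on each $T_i$ in a way that your local spinning supports cleanly; the paper sidesteps this by starting from the fibration.
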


\begin{proof}
First assume that $F$ is horizontal in $M$. Then $M$ fibres over the circle with fibre $F$ and the manifold obtained by cutting $M$ open along $n \geq 1$ fibres has $n$ components, each diffeomorphic to a product $F \times I$. Further, a boundary component $T_j$ of $M$ is cut into $n|\partial F \cap T_j|$ annuli. 

We can create holonomy in any of the product regions $F \times I$ as follows (cf.~\cite[Operation 2.2]{Ga}). Given a smooth, properly embedded arc $A$ in $F$ running between distinct boundary components $C, C'$ of $F$ and an orientation-preserving diffeomorphism $f: I \to I$, cut $F \times I$ open along $A \times I$ then reglue according to the rule $(x,t) \mapsto (f(x), t)$ for $(x, t) \in A \times I$. The product foliation on $F \times I$ is transformed into a new smoothable foliation of $M$ which coincides with the old one near $\big(\partial F \setminus (C \cup C')\big) \times I$. On the other hand, the induced foliations on $C \times I$ and $C' \times I$ are diffeomorphic to the suspension of $f$. Choosing a finite number of arcs whose union contains points of each boundary component of $F$, we can produce a smooth, co-oriented foliation on $F \times I$ which restricts to the suspension foliation of an increasing or decreasing diffeomeorphism of $I$ on each component of $\partial F \times I$. Note that $F \times \{0\}$ and $F \times \{1\}$ are leaves of the new foliation. After an appropriate choice of $n$, arcs $A$ and diffeomorphisms $f$ (either strictly increasing or strictly decreasing) for each of the $n$ copies of $F \times I$, we can produce a smooth co-oriented taut foliation on $M$ which detects $[\alpha_*(\mathcal{F})]$ and which is hyperbolic on all of the boundary components of $M$. 

Next assume that $F$ is vertical, and therefore an annulus. Without loss of generality we can suppose that $[\alpha_i(\mathcal{F})]$ is vertical if and only if $1 \leq i \leq s$. Cut $M$ open along a finite number of vertical annuli as in the proof of \cite[Proposition 6.5]{BC} and fix a fibration on the resulting manifold $M'$ which detects $[\alpha_i]$ on $T_i$ for $s + 1 \leq i \leq r$. Add hyperbolic behaviour on the boundary components of $M'$ as in the previous paragraph. Finally we reglue and spin around the vertical annuli to produce hyperbolic behaviour on the components of $\partial M$. 
\end{proof}

\begin{proposition} \label{compact 2} 
Let $M$ be a Seifert manifold as in \S \ref{pieces} which admits a smooth, co-oriented taut foliation $\mathcal{F}$ with a compact leaf $F$ such that $|\partial M| + |\partial F| \geq 3$. For each odd $k \geq 1$, there is a smooth co-oriented taut foliation on $M$ which detects $[\alpha_*(\mathcal{F})]$ and is $k$-interval hyperbolic on each component of $\partial M$. 
\end{proposition}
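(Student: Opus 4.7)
The strategy is to refine the argument of Proposition \ref{compact} by replacing the auxiliary diffeomorphism $h$ used there by one with genuine identity neighbourhoods at the endpoints, and by installing a sign-alternation pattern across consecutive copies of $F \times I$. This converts the isolated closed boundary leaves produced there into $2k$ intervals of closed leaves alternating with $2k$ moving intervals of opposite monotonicity, which is precisely the $k$-interval hyperbolic structure.

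Suppose first that $F$ is horizontal, so $M$ fibres over $S^1$ with fibre $F$. Cut $M$ along $n = 2k$ fibres to obtain $n$ copies of $F \times I$, and fix a smooth orientation-preserving diffeomorphism $\tilde{h} \colon I \to I$ which equals the identity on $[0,\epsilon] \cup [1-\epsilon, 1]$ and is strictly increasing on $(\epsilon, 1-\epsilon)$; such a $\tilde h$ is easily constructed as $\tilde h(t) = t + \delta\phi(t)$ for small $\delta > 0$ and a smooth bump $\phi$ supported in $(\epsilon, 1-\epsilon)$ that is flat at $\epsilon$ and $1-\epsilon$ to all orders. In each copy of $F \times I$, choose a finite family of smoothly properly embedded disjoint arcs in $F$ whose union meets every boundary component of $F$, and apply Gabai's Operation 2.2 to $A \times I$ for each such arc $A$, using $\tilde h$ in odd-indexed copies and $\tilde h^{-1}$ in even-indexed copies. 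Because $\tilde h$ is the identity near $0$ and $1$, all modifications are supported away from $F \times \{0, 1\}$, so the cyclic gluings $F \times \{1\} \to F \times \{0\}$ reassemble into a smooth, co-oriented, taut foliation $\mathcal{F}'$ on $M$ detecting $[\alpha_*(\mathcal{F})]$, exactly as in Proposition \ref{compact}.

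On each boundary torus $T_j$, $\mathcal{F}' | T_j$ is a suspension whose monodromy $g_j$ is assembled cyclically from $2k$ annular pieces: on the middle of each piece $g_j$ coincides with $\tilde h$ or $\tilde h^{-1}$ according to the parity of its copy, and is the identity in the $\epsilon$-neighbourhood of each gluing seam. Since consecutive identity regions from adjacent pieces merge across each seam, the fixed set of $g_j$ is a disjoint union of $2k$ non-degenerate closed intervals, separated by $2k$ open intervals on which $g_j$ is alternately strictly increasing and strictly decreasing. Hence $g_j \in IH^\infty(k)$, and $\mathcal{F}' | T_j$ is $k$-interval hyperbolic. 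The vertical compact leaf case is reduced to the horizontal one exactly as in the second paragraph of the proof of Proposition \ref{compact}: cut $M$ along vertical annuli to produce a piece $M'$ admitting a fibration which detects the horizontal slopes, apply the horizontal construction to $M'$, then reglue and spin around the vertical annuli to install the $k$-interval hyperbolic behaviour on the remaining boundary components. The main technical subtlety lies in the multi-component case $|\partial F \cap T_j| > 1$, where one must assign $\tilde h$ and $\tilde h^{-1}$ to arcs compatibly with the cyclic order of annular pieces on $T_j$ --- in general by selectively omitting arcs in certain copies so that the corresponding pieces become identity strips absorbed into adjacent fixed intervals, leaving precisely $2k$ moving strips alternating in direction; the hypothesis $|\partial M| + |\partial F| \geq 3$ supplies the combinatorial room needed to arrange these choices consistently.
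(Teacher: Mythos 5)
Your overall strategy is the same as the paper's: split $M$ along $n$ fibres into copies of $F \times I$, install holonomy via Gabai's arc operation, and reassemble, reducing the vertical-leaf case to this by cutting and spinning along vertical annuli. The only real variation is cosmetic: you take a diffeomorphism $\tilde h$ that is literally the identity near the endpoints of $I$, so that intervals of fixed points appear automatically where identity regions merge across seams, whereas the paper takes the diffeomorphism from the proof of Proposition \ref{elliptic to ih} (tangent to the identity at the endpoints but strictly monotone) and produces intervals of fixed points by leaving entire annuli unmodified. Both devices lead to the same combinatorics and the same bookkeeping.

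The genuine gap is in your final paragraph. The sentence ``the hypothesis $|\partial M| + |\partial F| \geq 3$ supplies the combinatorial room needed to arrange these choices consistently'' attributes the success to the geometric hypothesis alone and obscures the fact that the \emph{parity} of $k$ is what governs the hardest case. When $|\partial M| + |\partial F| = 3$, so $|\partial M|=1$ and $|\partial F \cap T_1| = 2$, the $4k$ annular pieces of $T_1$ are arranged cyclically so that the $C$-annuli and $C'$-annuli from the same copy of $F\times I$ sit across the torus from one another, and each Gabai operation along an arc from $C$ to $C'$ with diffeomorphism $f$ installs $f$ on one side and $f^{-1}$ on the other. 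If you omit arcs in every other copy to leave $2k$ moving strips, the requirement that the signs alternate around the full cyclic sequence of $4k$ annuli is a parity condition that is satisfied if and only if $k$ is odd. Your argument must carry out this check; otherwise the claim that $2k$ alternating moving strips can always be arranged is simply false for even $k$, and the reader is left with no explanation of why the statement is restricted to odd $k$.

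There is a smaller gap in your treatment of the vertical case. Spinning around a single vertical annulus produces hyperbolic, not interval-hyperbolic, behaviour; to install $k$-interval hyperbolic behaviour along a boundary torus reached by spinning you need to spin around $k$ parallel copies of the annulus (as the paper does explicitly in Proposition \ref{compact 3}). Saying that reglueing and spinning ``installs the $k$-interval hyperbolic behaviour'' without this quantitative adjustment leaves a hole.
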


\begin{proof}
Proceed as in the proof of Proposition \ref{compact}, though when we construct the foliations on the components $F \times I$ we choose arcs which are disjoint from certain boundary components of $F$, thus leaving leaving the product foliation on these components of $\partial F \times I$ alone. The reader will verify that if $|\partial M| + |\partial F| \geq 4$, then for each integer $k \geq 1$ we can choose $n$, the arcs, and the diffeomorphisms $f$ (as in the proof of Proposition \ref{elliptic to ih}) to create the appropriate interval hyperbolic behaviour on $T_1, \ldots, T_r$ while leaving the remaining boundary components alone. On the other hand, if $|\partial M| + |\partial F| = 3$, so $s = 1$ and $|\partial F \cap T_1| = 2$, we can only construct $k$-interval hyperbolic behaviour on the boundary of $T_1$ for odd $k$, owing to the relative position on $\partial M$ of the boundary components of a finite number of fibres of the fibration. 
\end{proof}

\begin{proposition} \label{compact 3} 
Let $M$ be a Seifert manifold as in \S \ref{pieces} and $[\alpha_*] \in \mathcal{S}(M)$ such that $v([\alpha_*]) > 0$ and is even if $M$ has orientable base orbifold. Without loss of generality suppose that $[\alpha_i]$ is rational if and only if $1 \leq i \leq t$ and $[\alpha_i] = [h]$ if and only if $1 \leq i \leq v = v([\alpha_*])$. Fix an odd integer $k \geq 1$. Then there is a smooth foliation $\mathcal{F}$ on $M$ which detects $[\alpha_*]$ and which is $k$ or $k+2$-interval hyperbolic on $T_1$ and $k$-interval hyperbolic on $T_i$ for $2 \leq i \leq t$. 
\end{proposition}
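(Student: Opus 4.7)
The plan is to adapt the constructions of Propositions \ref{compact} and \ref{compact 2} to the present setting, where some of the prescribed slopes are vertical. Since $v := v([\alpha_*]) \geq 1$, I would first produce, as in the vertical-leaf case of the proof of Proposition \ref{compact}, a co-oriented taut foliation of $M$ detecting $[\alpha_*]$ that contains a family of disjoint compact vertical annular leaves $A_1, \ldots, A_m$ whose boundary lies entirely in $T_1 \cup \cdots \cup T_v$. Cutting $M$ along these annuli yields a Seifert manifold $M'$ on which the ambient slopes to be detected on the non-new boundary tori $T_i$ ($i > v$) are horizontal, and I can install a smooth horizontal foliation realizing them using the material of Section \ref{s: horizontal foliations}.

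Next I would apply Proposition \ref{elliptic to ih} together with Addendum \ref{addendum} on the pieces of $M'$ to upgrade the horizontal foliation so that on each $T_i$ with $v+1 \leq i \leq t$ the restriction is $k$-interval hyperbolic, while on $T_i$ with $t < i \leq r$ the restriction is unchanged. On the annular boundary components of $M'$ created by the cut, I would likewise prescribe the restriction to be the suspension of an appropriate orientation-preserving diffeomorphism of $I$, fixing the endpoints and alternating between strictly increasing and strictly decreasing as needed, via the arc-and-cut construction used in Propositions \ref{elliptic to ih} and \ref{compact 2}.

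I would then reglue $M$ along the $A_j$ using the spinning operation of \cite[Operation 2.2]{Ga}. The resulting foliation, restricted to each vertical boundary component $T_i$ with $1 \leq i \leq v$, appears as a suspension whose fixed set is a finite union of closed intervals of slope $[h]$, with strict monotonicity alternating between them. The parity hypothesis that $v$ is even when the base orbifold is orientable ensures that the alternation can be made to close up coherently on $T_2, \ldots, T_v$ to produce exactly $2k$ fixed intervals on each.

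The main obstacle is the parity accounting on $T_1$. The total count of arc endpoints of the $A_j$ on $T_1 \cup \cdots \cup T_v$, modulo the alternating requirement, is pinned down by the topology of $M$ and by the parity of $v$. After using our flexibility to get exactly $2k$ fixed intervals on $T_2, \ldots, T_t$, any residual two-point defect must be absorbed on a single distinguished torus, which we take to be $T_1$; this yields either $2k$ or $2k+4$ fixed intervals there, and hence the ``$k$ or $k+2$'' alternative in the statement. The requirement that $k$ be odd reflects precisely the same obstruction as in the $|\partial M| + |\partial F| = 3$ case of Proposition \ref{compact 2}, which must be verified directly from the local combinatorics of the spinning operation around the annuli $A_j$.
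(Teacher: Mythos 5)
Your high-level strategy — cut along vertical annuli to reduce to a horizontal problem, install interval-hyperbolic behaviour there, and reglue by spinning — is in the same spirit as the paper's, but there are three concrete gaps.

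First, invoking Addendum \ref{addendum} is a mistake here: the Addendum explicitly notes that the modified foliation \emph{cannot be assumed smooth} (the construction thickens leaves), whereas Proposition \ref{compact 3} asserts the existence of a \emph{smooth} foliation. The paper avoids the Addendum entirely and uses only Proposition \ref{elliptic to ih} on the non-fibration side and the arc-cutting construction of Proposition \ref{compact 2} on the fibration side. If you choose the representation $\rho$ appropriately (all boundary restrictions elliptic), you don't need the Addendum; but as written your argument loses smoothness.

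Second, and more seriously, you never address the case where the induced horizontal foliation is a \emph{fibration}. When $[\alpha_*]$ is rational and $v = 1$ (which forces the base orbifold to be non-orientable, given your parity hypothesis), any representation with $\rho(h) = \hbox{sh}(1)$ and rational, translation-by-rationals boundary holonomy will have image a finite cyclic group, and by Lemma \ref{l: condition for fibring} the resulting $\mathcal{F}(\rho)$ is a fibration. Proposition \ref{elliptic to ih} hypothesizes that $\mathcal{F}(\rho)$ is \emph{not} a fibration, so it simply does not apply. The paper isolates this situation as its Case 2 and handles it with the cut-and-reglue machinery of Proposition \ref{compact 2}, supplemented by an extra spin around the annulus $A_0$; your proposal has no analogue.

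Third, your justification of the $k$ versus $k+2$ dichotomy on $T_1$ is an assertion, not an argument: you say "any residual two-point defect must be absorbed on a single distinguished torus" without tracking the combinatorics of how the spinning around your annuli $A_j$ interacts with the pre-existing suspension foliations on their endpoints. The paper's proof works through this explicitly via several sub-cases (orientable base; non-orientable base with $v$ even; non-orientable base with $v$ odd; then the fibration case with $r=1$, $r>1$ large fibre, and the exceptional $r=2$, $|\partial F \cap T_2| = 1$), spinning around carefully chosen families of vertical annuli between specific pairs of tori, and in the non-orientable case cutting along a single annulus $A_0$ from $T_1$ to itself rather than along your family $A_1, \ldots, A_m$. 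This bookkeeping is where the "$k$ or $k+2$" actually arises, and without it the claimed conclusion is not established.

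So the proposal captures the right flavour — split along vertical annuli, go horizontal, spin back — but as written it uses a lemma that breaks smoothness, omits the fibration case entirely, and leaves the key parity count unverified.
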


\begin{proof} 
Let $M_1 = M$ if $M$ has orientable base orbifold. Otherwise split $M$ along a vertical annulus $A_0$ running from $T_1$ to itself to produce a Seifert manifold $M_1$ with planar base orbifold. Then $\partial M_1 = T_1' \cup T_2 \cup \ldots \cup T_r$ where $T_1' = T_1$ when $M$ has orientable base orbifold. When $M$ has a non-orientable base orbifold, $T_1'$ contains disjoint essential annuli $A_0^+, A_0^-$ corresponding to $A_0$.  

Fix a homomorphism $\rho: \pi_1(M_1) \to \widetilde{PSL}(2, \mathbb R)$ such that the image of $\rho$ consists of translations, $\rho(h) = \hbox{sh}(1)$, $[\alpha_i(\mathcal{F}(\rho))]  = [\alpha_i]$ for $s+1 \leq i \leq r$, and $[\alpha_i(\mathcal{F}(\rho))]$ is rational for $3 \leq i \leq t$. If $[\alpha_*]$ is not rational this may require that $[\alpha_1(\mathcal{F}(\rho))]$ be irrational. If $[\alpha_*]$ is rational and $v \geq 2$, we can suppose that $[\alpha_1(\mathcal{F}(\rho))]$ and $[\alpha_2(\mathcal{F}(\rho))]$ are irrational. 

{\bf Case 1}. $[\alpha_*]$ is not rational or $v \geq 2$. 

Under these conditions, $\mathcal{F}(\rho)$ is not a fibration, so Proposition \ref{elliptic to ih} implies that there exists a smooth, co-oriented, horizontal  foliation $\mathcal{F}_1$ on $M_1$ such that $[\alpha_*(\mathcal{F}_1)] = [\alpha_*(\mathcal{F}(\rho))]$, $\mathcal{F}_1|{T_{i}} = \mathcal{F}|{T_{i}}$ for $1 \leq i \leq v$, and $\mathcal{F}_1|{T_{i}}$ is $k$-interval hyperbolic for $v + 1 \leq i \leq t$. 

Suppose that $M$ has orientable base orbifold. Then $M_1 = M$ and $v \geq 2$ is even. To build $\mathcal{F}$, spin $\mathcal{F}_1$ vertically around an appropriately chosen family of disjoint, properly-embedded, vertical annuli running between $T_{2i-1}$ and $T_{2i}$ for each $i = 1,  \ldots, v/2$ (cf. \cite[Lemma 6.5]{BC}) so as to obtain the desired interval hyperbolic behaviour on $T_1, \ldots, T_v$. This operation leaves $\mathcal{F}_i$ alone near $T_i$ for $v+1 \leq i \leq r$. 

Suppose next that $M$ has non-orientable base orbifold and $v$ is even. Choose a family of disjoint, properly-embedded, vertical annuli running between $T_{2i-1}$ and $T_{2i}$ in $M_1$ for $2 \leq i \leq v/2$ and spin about them to fix things on $T_3, \ldots, T_u$. Next connect $T_1$ and $T_2$ by two disjoint, properly-embedded, vertical annuli  $A_+, A_-$, disjoint the previous chosen ones, such that the union of $A_+, A_-$ and an annulus in $T_2$ is (non-ambiently) isotopic to $A_0^+$. Now spin appropriately about $k$ parallel copies of $A_+$ and $k$ parallel copies of $A_-$ and reglue $A_0^+$ to $A_0^-$ to reform $M$ in order to obtain $k$-interval hyperbolic behaviour on $T_1$ and $T_2$. 

Finally suppose that $M$ has non-orientable base orbifold and $v$ is odd. Choose an appropriate family of disjoint, properly-embedded, vertical annuli running between $T_{2i}$ and $T_{2i+1}$ in $M_1$ for $1 \leq i \leq (v-1)/2$ and spin vertically around them to fix things on $T_2, \ldots, T_v$. Now reglue $A_0^+$ to $A_0^-$ to reform $M$ and spin about $k$ parallel copies of $A_0$ to fix things on $T_1$. 

{\bf Case 2}.  $[\alpha_*]$ is rational and $v = 1$. 

Since $v = 1$, $M$ has non-orientable base orbifold, and since $[\alpha_*]$ is rational, $\mathcal{F}(\rho)$ is a fibration on $M_1$ (Lemma \ref{l: condition for fibring}).

If $r = 1$, reglue $A_0^+$ to $A_0^-$ to reform $M$ and spin about $k$ parallel copies of $A_0$ to arrange for $k$-interval hyperbolic behaviour on $T_1$. 

Next suppose that $r > 1$ and let $F$ be a fibre of the fibration $\mathcal{F}(\rho)$. If $(r-1) + \sum_{i = 2}^r |\partial F \cap T_i| \geq 3$ we can spin about $k$ parallel copies of $A_0$ to fix things on $T_1$ and then use the argument of the proof of Proposition \ref{compact 2} applied to $\mathcal{F}(\rho)$ and the tori $T_2, \ldots, T_r$ to complete the proof.  

Finally suppose that $r > 1$ and $(r-1) + \sum_{i = 2}^r |\partial F \cap T_i| \leq 2$. Then $r = 2$ and $|\partial F \cap T_2| = 1$. Use the same argument to arrange $k$-interval hyperbolic behavior on $T_1'$ and $T_2$. Then reglue $A_0^+$ to $A_0^-$ to reform $M$ and spin about $k$ parallel copies of $A_0$ to fix things on $T_1$.  
\end{proof}

\section{Slope detection for graph manifold rational homology solid tori} \label{sec: slope detection graph}

In this section we  derive characterisations of slope detection (Theorem \ref{theorem: gluing}) and strong slope detection (Theorem \ref{theorem: gluing 2}) in graph manifold rational homology solid tori. These are used in \S \ref{subsec: slope detection graph} to study the space of detected slopes of a graph manifold rational homology solid torus.   

Throughout we fix a graph manifold rational homology solid torus $V$ with JSJ pieces 
$M_1, \ldots, M_n$ where $M_1$ is the piece containing $\partial V$. 
Let $\mathcal{T}(V)= \{T_0 = \partial V, T_1, T_2, \ldots ,T_m\}$ where $T_1, \ldots, T_m$ are the JSJ tori of $V$. Then 
$\mathcal{S}(\mathcal{T}(V)) = \mathcal{S}(V) \times \mathcal{S}(T_1) \times \ldots \times \mathcal{S}(T_m)$ 
projects onto $\mathcal{S}(M_i)$ for each $i$. 
For $[\alpha_*] \in \mathcal{S}(\mathcal{T}(V))$ we use
$$[\alpha_*^{(i)}] \in \mathcal{S}(M_i)$$ 
to denote this projection and call $[\alpha_*]$ {\it gluing coherent} if $[\alpha_*^{(i)}]$ is detected for all $i$. 

\subsection{A characterisation of detected slopes on the boundary of graph manifold rational homology solid tori}
We take the convention in this and the next subsection that the parenthetical phrases in the statements of results are to be either simultaneously considered or simultaneously ignored.

\begin{theorem} \label{theorem: gluing} 
Let $V$ be a graph manifold rational homology solid torus and fix $[\alpha] \in \mathcal{S}(V)$. Then $V$ admits a $($horizontal$)$ co-oriented taut foliation which detects $[\alpha]$ if and only if $[\alpha]$ extends to a $($horizontal$)$ gluing coherent element $[\beta_*] \in \mathcal{S}(\mathcal{T}(V))$. 
\end{theorem}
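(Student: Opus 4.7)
The plan is to prove both implications by analyzing $V$ piece-by-piece along its JSJ decomposition: the forward direction reduces to standard foliation-theoretic manipulations of taut foliations on Haken manifolds, while the backward direction is the substantive content and exploits Theorem \ref{detected slopes seifert} together with the interval-hyperbolic machinery of \S \ref{sec: operations}.

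For the forward direction ($\Rightarrow$), suppose $\mathcal{F}$ is a (horizontal) co-oriented taut foliation on $V$ detecting $[\alpha]$. Using standard results for taut foliations in Haken manifolds (of Roussarie--Thurston / Gabai / Brittenham type), each JSJ torus of $V$ can be isotoped to be either a leaf of $\mathcal{F}$ or transverse to $\mathcal{F}$; JSJ tori which are leaves can be replaced by product regions and the foliation then perturbed to be transverse to them. After these modifications the restriction $\mathcal{F}_i := \mathcal{F}|M_i$ is a co-oriented taut foliation of $M_i$ transverse to $\partial M_i$, and therefore detects a tuple $[\beta_*^{(i)}] \in \mathcal{S}(M_i)$. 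Since the slope induced by a transverse foliation on a torus is intrinsic to the one-dimensional foliation it cuts out there, the detected slopes on the two sides of each JSJ torus agree. The assembled tuple $[\beta_*] \in \mathcal{S}(\mathcal{T}(V))$ is therefore gluing coherent and restricts to $[\alpha]$ on $\partial V$. Horizontality is preserved under the manipulations, giving the parenthetical variant.

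For the backward direction ($\Leftarrow$), suppose $[\beta_*]$ is gluing coherent and extends $[\alpha]$. Then on each piece $M_i$ there is a (horizontal) co-oriented taut foliation $\mathcal{F}_i$ detecting $[\beta_*^{(i)}]$. The real difficulty is that agreement of the slopes on a JSJ torus $T$ shared by $M_i$ and $M_j$ does \emph{not} a priori imply that the one-dimensional foliations $\mathcal{F}_i|T$ and $\mathcal{F}_j|T$ coincide, which is what is needed in order to actually glue. The strategy is to replace each $\mathcal{F}_i$ by a foliation detecting the same slopes but with controlled boundary behaviour: on each JSJ torus $T$ at which the detected slope is irrational, the boundary foliation is forced to be linear of that slope, hence is unique up to conjugation; on each JSJ torus $T$ at which the detected slope is rational, we invoke Proposition \ref{fixing the pieces} to replace $\mathcal{F}_i$ by a foliation whose restriction to $T$ is $k$-interval hyperbolic for a fixed large odd $k$ chosen uniformly across $V$. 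Since the diffeomorphism type of a $k$-interval hyperbolic foliation of prescribed slope on $T$ is determined by $k$ and the slope, after possibly adjusting by a diffeomorphism of $T$ we may glue $\mathcal{F}_i$ and $\mathcal{F}_j$ transversely across $T$. Performing this across every JSJ torus of $V$ produces the desired (horizontal) co-oriented taut foliation on $V$ detecting $[\alpha]$.

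The main obstacle will be the matching-on-$T$ step of the backward direction in the degenerate cases: namely when the relative detected set $\mathcal{D}(M_i; T; \ldots)$ on one side collapses to a single rational slope, so that strong detection fails and the flexibility needed to make the boundary foliation $k$-interval hyperbolic is not immediate, or when a matched slope equals the fibre slope $[h]$ of one of the two pieces (so that the Seifert structure of that piece must be handled via the vertical constructions of Proposition \ref{compact 3}). In these cases one must lean on the finer structure of Theorem \ref{detected slopes seifert}, specifically the relation between $\mathcal{D}$ and $\mathcal{D}_{str}$, together with the spinning and thickening operations developed in \S \ref{sec: operations}, to guarantee that matching foliations on $T$ can always be arranged.
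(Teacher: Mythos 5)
Your forward direction is the paper's: isotope the foliation transverse to the JSJ tori, restrict to each piece, and read off the slope tuple, which is automatically gluing coherent. Your backward direction also has the paper's overall shape — normalize boundary behaviour piece by piece with Proposition \ref{fixing the pieces}, make the boundary foliations $k$-interval hyperbolic for a uniform odd $k$, and glue across the JSJ tori via the mechanism of Lemma 6.9 of \cite{BC}. However, the proposal omits a step that the paper treats as the main content of the backward implication, and that omission is a genuine gap rather than a detail.

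Before Proposition \ref{fixing the pieces} can be invoked you must first have a \emph{smooth} taut foliation on each piece $M_i$ detecting $[\beta_*^{(i)}]$, and producing one when $v([\beta_*^{(i)}]) > 0$ goes through Proposition \ref{compact 3}, which requires $v([\beta_*^{(i)}])$ to be \emph{even} whenever $M_i$ has orientable base orbifold. An arbitrary gluing coherent $[\beta_*]$ need not satisfy this: by \cite[Proposition 6.5]{BC}, an odd $v([\beta_*^{(i)}])$ with orientable base must be at least $3$, and your closing appeal to ``the finer structure of Theorem \ref{detected slopes seifert}'' cannot repair this, since that theorem only describes the sets $\mathcal{D}$ and $\mathcal{D}_{str}$ and says nothing about realizing a given vertical pattern by a smooth foliation. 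The paper resolves it by modifying $[\beta_*]$ rel $[\alpha]$: because $V$ is a rational homology solid torus, among the sub-solid-tori $V_j$ lying beyond an offending piece $M_i$ at most one satisfies $[\beta_j] = [\lambda_{V_j}]$, so one can pick a different index $j$ and use \cite[Lemma 10.3]{BC} to replace $[\beta_j]$ by a non-vertical detected slope, decrementing $v([\beta_*^{(i)}])$; iterating outward along the JSJ tree, which is rooted at the piece containing $\partial V$, makes every relevant $v$ even while leaving $[\alpha]$ untouched. The paper then also invokes \cite[Lemma 10.2]{BC} to rationalize the JSJ-torus entries of $[\beta_*]$. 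Your alternative — that a boundary foliation with irrational detected slope is ``forced to be linear'' — is false for a general taut foliation (a Denjoy-type holonomy has irrational rotation number without being conjugate to a rotation); it holds only for the model foliations $\mathcal{F}(\rho)$, so you would need to reduce to those, or rationalize as the paper does.
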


\begin{proof}
Without loss of generality we can suppose that $n > 1$. 

 The forward implication of (1)  is straightforward; a (horizontal) co-oriented taut foliation $\mathcal{F}$ on $V$ can be isotoped so that it is transverse to $\mathcal{T}(V)$ and intersects each $M_i$ in a (horizontal) co-oriented taut foliation. Hence it determines a (horizontal) gluing coherent element $[\beta_*] \in \mathcal{S}(V; \mathcal{T})$ which extends $[\alpha]$.  

Conversely suppose that $[\alpha]$ extends to a gluing coherent $($horizontal$)$ element $[\beta_*] \in \mathcal{S}(\mathcal{T}(V))$. Our first task is to modify $[\beta_*]$ (rel $[\alpha]$) so that we can apply the results of \S \ref{sec: operations}. 

Suppose that $v([\beta_*^{(i)}])$ is odd for some $i$ such that $M_i$ has an orientable base orbifold. In this case $v([\beta_*^{(i)}]) \geq 3$ by \cite[Proposition 6.5]{BC}. Without loss of generality we can suppose that $\partial M_i = T \cup T_1 \cup \ldots \cup T_{r}$ where $T$ is either $\partial V$ or the JSJ torus in $\partial M_i$ which separates $M_i$ from $\partial V$. Then $V = U \cup V_1 \cup \ldots \cup V_r$ where
\vspace{-.3cm} 
\begin{itemize}
\item $U$ is empty if $T = \partial V$ and is the connected graph submanifold of $V$ with boundary $\partial V \cup T$ otherwise; 

\vspace{.2cm}\item $V_1, \ldots, V_{r}$ are the graph manifold rational homology solid tori in $V$ such that $\partial V_j = T_j$.
\end{itemize} 
\vspace{-.3cm}
We can suppose that $i$ is chosen so that $U$ contains no pieces $M_j$ with $v([\beta_*^{(j)}])$ odd and orientable base orbifold.  Since $V$ is a rational homology solid torus, there is at most one $j \in \{1, \ldots, r\}$ such that $[\beta_j]$ is the rational longitude of $V_j$. Thus, as $r \geq v([\beta_*^{(i)}]) - 1 \geq 2$, 
there is a $j$ such that $[\beta_j] \ne [\lambda_{V_j}]$. Then by \cite[Lemma 10.3]{BC}, there is another gluing coherent family of (horizontal) slopes $[\beta_*']$ which extends $[\alpha]$, agrees with $[\beta_*]$ on the JSJ tori of $V$ contained in $V \setminus V_j$, and for which $v([(\beta_*')^{(i)}]) = v([\beta_*^{(i)}]) - 1$. Since the graph of the $V$ is a tree rooted at its outermost piece, it is easy to see that we can continue to make such modifications without affecting those already made, and thus arrive at an extension of $[\alpha]$, which we continue to denote by $[\beta_*]$, such that $v([\beta_*^{(i)}])$ is even whenever it is positive and $M_i$ has an orientable base orbifold. By the proof of \cite[Lemma 10.2]{BC}, we can also assume that the components of $[\beta_*]$ corresponding to the JSJ tori of $V$ are rational. We can now use Proposition \ref{fixing the pieces} to choose co-oriented taut foliations on the pieces of $V$ which piece together (cf. Lemma 6.9 of \cite{BC}) to form a co-oriented taut foliation on $V$ which detects $[\alpha]$. This completes the proof. 
\end{proof}

\begin{corollary} \label{cor: detection}
Suppose that $\partial M_1 = \partial V \cup T_1 \cup \ldots \cup T_r$ where $r \geq 0$ and $V_j$ is the graph manifold rational homology solid torus in $V$ bounded by $T_j$ $(1 \leq j \leq r)$. Set $D_j = \mathcal{D}(V_j)$ and $D_* = D_1 \times \ldots \times D_{r}$. Then 

$(1)$ $\mathcal{D}(V) = \mathcal{D}(M_1; \partial V; \emptyset; D_*)$.

$(2)$ $\mathcal{D}(V)$ is a closed, connected subset of the circle $\mathcal{S}(\partial V)$ whose frontier consists of rational slopes. 

$(3)$ $[\lambda_V] \in \mathcal{D}(V)$. 
\end{corollary}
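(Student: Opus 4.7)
My plan is to induct on the number $n$ of JSJ pieces of $V$. When $n = 1$, $V$ equals $M_1$, which has a single torus boundary, and all three assertions reduce to instances of Theorem \ref{detected slopes seifert}. For the inductive step I assume the corollary for each graph manifold rational homology solid torus with fewer than $n$ pieces; in particular each $D_j = \mathcal{D}(V_j)$ is already known to be non-empty, closed, connected with rational frontier, and to contain $[\lambda_{V_j}]$.

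Assertion (1) I would derive directly as a translation of Theorem \ref{theorem: gluing}. For the forward inclusion, any $[\alpha] \in \mathcal{D}(V)$ extends to a gluing coherent tuple $[\beta_*]$ on $\mathcal{T}(V)$; its restriction to $\mathcal{T}(V_j)$ is a gluing coherent extension of the $T_j$-coordinate $[\beta_j]$, so $[\beta_j] \in D_j$ by Theorem \ref{theorem: gluing} and the inductive hypothesis, while its restriction to $\mathcal{T}(M_1)$ certifies $[\alpha] \in \mathcal{D}(M_1; \partial V; \emptyset; D_*)$. Conversely, given $[\alpha]$ in the relative detection set, I would pick $[\beta_j] \in D_j$ realising it on $M_1$, apply induction to extend each $[\beta_j]$ to a gluing coherent tuple over $V_j$, and splice these extensions together with the foliation on $M_1$ to obtain a gluing coherent extension of $[\alpha]$ over $V$; Theorem \ref{theorem: gluing} then yields $[\alpha] \in \mathcal{D}(V)$.

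Assertion (2) is then essentially formal: by (1), $\mathcal{D}(V) = \mathcal{D}(M_1; \partial V; \emptyset; D_*)$, and the inductive hypothesis on each $D_j$ provides precisely the structure required on $S_*$ in Corollary \ref{cor: relative detection}, which in turn describes $\mathcal{D}(M_1; \partial V; \emptyset; D_*)$ as a closed, connected subset of $\mathcal{S}(\partial V)$ with rational frontier. Non-emptiness is inherited from (3).

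The main obstacle is (3). The strategy is to exhibit a gluing coherent tuple extending $[\lambda_V]$ whose $T_j$-coordinate is $[\lambda_{V_j}]$. I would take a properly embedded oriented surface $S \subset V$ with $\partial S$ a non-zero multiple of $\lambda_V$ on $\partial V$ (which exists since $[\lambda_V]$ is torsion in $H_1(V)$) and put $S$ in minimal position with respect to the JSJ tori. For each $j$ the curves $S \cap T_j$ carry a non-trivial class that bounds rationally in $V_j$, forcing their slope to be $[\lambda_{V_j}]$, which lies in $D_j$ by induction. The delicate step is to verify that $([\lambda_{V_1}], \ldots, [\lambda_{V_r}], [\lambda_V])$ lies in $\mathcal{D}(M_1; \partial V; \emptyset; D_*)$; I would do this by translating these slopes into the $\tau$-coordinates of \S\ref{sec: relative detection} and checking that they fall inside the detection interval supplied by Proposition A.4 of \cite{BC}, or equivalently by using $S \cap M_1$ as the backbone of a horizontal foliation on $M_1$ detecting precisely these slopes. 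Once this is done, (1) gives $[\lambda_V] \in \mathcal{D}(V)$ and completes the non-emptiness needed in (2).
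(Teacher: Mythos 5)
Your overall strategy — induction on the number of pieces, assertion (1) as an unpacking of Theorem \ref{theorem: gluing}, assertion (2) as a formal consequence of (1) plus Corollary \ref{cor: relative detection} and the inductive hypothesis on the $D_j$, and assertion (3) via a bounded surface with boundary slope $[\lambda_V]$ placed efficiently with respect to the JSJ tori — is exactly the paper's. Your expansion of (1) is more explicit than the paper's one-line appeal to Theorem \ref{theorem: gluing} and is correct. Two small imprecisions in (3): you should take $S$ to be \emph{incompressible} (minimal JSJ-intersection alone does not force $S\cap M_1$ to be essential), and you should allow $S\cap T_j=\emptyset$, in which case $[\lambda_{V_j}]$ is used simply because it lies in $D_j$ by induction.

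The genuine gap is in the ``delicate step'' of (3). Both routes you offer — translating $([\lambda_V],[\lambda_{V_1}],\dots,[\lambda_{V_r}])$ into $\tau$-coordinates and invoking Proposition A.4 of \cite{BC}, or using $S\cap M_1$ as the backbone of a \emph{horizontal} foliation — tacitly assume that these slopes are horizontal in $M_1$. That fails in general: since $S$ is essential, $S\cap M_1$ is either horizontal or \emph{vertical}, and in the vertical case $S\cap M_1$ is an annulus and $[\lambda_V]$ may equal the fibre slope $[h]$ (this always happens, for instance, when $M_1$ has non-orientable base orbifold $Q(a_1,\dots,a_n)$, since then $[\lambda_V]=[h_0]$). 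The $\tau$-coordinate is undefined for vertical slopes, and one cannot build a horizontal foliation with a vertical annulus as backbone. The paper closes this case by noting that when $F\cap M_1$ is a vertical annulus, \cite[Proposition 6.5]{BC} directly yields that $([\lambda_V],[\lambda_{V_1}],\dots,[\lambda_{V_r}])$ is foliation detected in $M_1$ (via a foliation with a vertical annular leaf), after which Theorem \ref{theorem: gluing} and induction finish the argument. You need to split into the horizontal/vertical dichotomy and supply this alternate argument in the vertical case.

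A minor point on the base case: when $V=M_1$, assertion (3) does not follow just from Theorem \ref{detected slopes seifert} (which describes the \emph{shape} of the detected set but does not identify $[\lambda_{M_1}]$ as a member); the paper instead cites \cite[Proposition 6.5 and Corollary A.7]{BC} for this.
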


\begin{proof} 
We proceed by induction on the number $n$ of pieces of $V$. If $n = 0$ the identity $\mathcal{D}(V) = \mathcal{D}(M_1; \partial V; \emptyset; D_*)$ is obvious while assertion (2)  follows from Corollary \ref{cor: relative detection}. Finally assertion (3) is a consequence of \cite[Proposition 6.5 and Corollary A.7]{BC}. 

Suupose that $n > 0$. Theorem \ref{theorem: gluing} implies that $\mathcal{D}(V) = \mathcal{D}(M_1; \partial V; \emptyset; D_*)$. By induction, each $D_j$ is a closed, connected subset of the circle $\mathcal{S}(\partial V)$ which contains $[\lambda_{V_j}]$ and whose frontier consists of rational slopes. Corollary \ref{cor: relative detection} then implies that 
$\mathcal{D}(V) = \mathcal{D}(M_1; \partial V; \emptyset; D_*)$ is a closed, connected subset of the circle $\mathcal{S}(\partial V)$ whose frontier consists of rational slopes. To complete the induction we must show that $[\lambda_V] \in \mathcal{D}(V)$. 

Let $F$ be a compact, connected, orientable, incompressible surface in $V$ whose oriented boundary consists of like-oriented curves of slope $[\lambda_V]$. Assume, moreover, that $F$ is chosen to intersect the JSJ tori of $V$ minimally amongst all such surfaces. Then for each $j$, $F \cap \partial V_j$ is either empty or consists of curves of slope $[\lambda_{V_j}]$,  while $F \cap M$ is an essential surface in $M_1$, and so is either horizontal or vertical. In the former case, $M_1$ fibres over the circle with fibre $F \cap M_1$, so $([\lambda_V], [\lambda_{V_1}], \ldots, [\lambda_{V_{r}}])$ is foliation detected in $M_1$. On the other hand, if $F \cap M_1$ is vertical, it is an annulus and \cite[Proposition 6.5]{BC} implies that $([\lambda_V], [\lambda_{V_1}], \ldots, [\lambda_{V_{r}}])$ is foliation detected in $M_1$. In either case, it follows from Theorem \ref{theorem: gluing} and our inductive hypothesis that $[\lambda_V]$ is foliation detected in $V$.      
\end{proof}

\subsection{A characterisation of strongly detected slopes on the boundary of graph manifold rational homology solid tori}

\begin{theorem} \label{theorem: gluing 2} 
Let $V$ be a graph manifold rational homology solid torus and fix $[\alpha] \in \mathcal{S}(V)$. Assume that if $[\alpha]$ is rational and horizontal and $M_1([\alpha])$ fibres over the circle with fibre $F$ such that $|\partial F| = 1$ and $[\partial F]$ is not strongly detected in $V_1 = \overline{V \setminus M_1}$, then neither of the following occurs 
\vspace{-.3cm} 
\begin{itemize}
\item $M_1$ is a cable space and $\Delta([\alpha], h) = 1$, so $M_1([\alpha]) \cong S^1 \times D^2$;

\item $M_1$ has base orbifold an annulus with cone points and $\mathcal{D}(V_1) = \{[\partial F]\}$, so $[\alpha] = [\lambda_V]$. 

\end{itemize}
\vspace{-.3cm} 
Then $V$ admits a $($horizontal$)$ co-oriented taut foliation which strongly detects $[\alpha]$ if and only if $[\alpha]$ extends to a $($horizontal$)$ gluing coherent element $[\beta_*]$ such that $[\beta_*^{(1)}] \in \mathcal{D}_{str}(M_1; \partial V)$.  
\end{theorem}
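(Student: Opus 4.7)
The plan is to follow the same forward/backward scheme as in Theorem \ref{theorem: gluing}, now tracking linearity of the foliation on $\partial V$. For the forward direction, given a (horizontal) co-oriented taut foliation $\mathcal{F}$ on $V$ strongly detecting $[\alpha]$, I would isotope $\mathcal{F}$ to be transverse to the JSJ tori, restrict it to each piece, and thereby obtain the gluing coherent family $[\beta_*] \in \mathcal{S}(\mathcal{T}(V))$. Since $\mathcal{F} \cap \partial V$ is linear by hypothesis, the restriction $\mathcal{F}|_{M_1}$ detects $[\beta_*^{(1)}]$ with linear intersection on $\partial V$, so $[\beta_*^{(1)}] \in \mathcal{D}_{str}(M_1; \partial V)$. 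The degenerate case hypothesis is not needed in this direction.

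For the backward direction, the plan is to build the desired foliation on $V$ by first refining $[\beta_*]$ (rel $[\alpha]$) using the parity trick from the proof of Theorem \ref{theorem: gluing}, being careful to only perform the modifications of \cite[Lemma 10.3]{BC} within subtrees of the JSJ tree rooted away from the piece $M_1$ so that $[\beta_*^{(1)}]$ is not disturbed. After this reduction one may assume that $v([\beta_*^{(i)}])$ is even whenever positive and $M_i$ has orientable base orbifold, and that all JSJ-torus components of $[\beta_*]$ are rational. By the assumption $[\beta_*^{(1)}] \in \mathcal{D}_{str}(M_1; \partial V)$ there exists a taut foliation $\mathcal{F}_1^0$ on $M_1$ that detects $[\beta_*^{(1)}]$ and is linear on $\partial V$.

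The heart of the argument is to modify $\mathcal{F}_1^0$ so that its restrictions to the JSJ components of $\partial M_1$ become $k$-interval hyperbolic (for some odd $k \gg 0$), while the restriction to $\partial V$ remains linear. This is precisely the content of the constructions of \S\ref{sec: operations}, applied selectively: in Proposition \ref{elliptic to ih} one chooses the arcs used for cutting and regluing entirely away from $\partial V$, and in the compact-leaf constructions of Propositions \ref{compact 2} and \ref{compact 3} one simply omits $\partial V$ from the list of boundary tori that receive new holonomy. The excluded cases in the hypothesis are exactly the obstructions to this selective version: if $M_1$ is a cable space with $\Delta([\alpha],h)=1$ then $M_1([\alpha])$ is a solid torus and the fibration $\mathcal{F}_1^0$ is essentially unique, forcing new holonomy into $\partial V$; and if $M_1$ has base orbifold an annulus with cone points and $\mathcal{D}(V_1) = \{[\partial F]\}$, the relative detection machinery of Theorem \ref{detected slopes seifert} leaves no horizontal room to spin up interval-hyperbolic behavior on $T_1$ without acting on $\partial V$. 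Ruling these out is precisely what the hypothesis ensures, and this verification—matching the excluded cases to the failure modes of Proposition \ref{fixing the pieces}—is the main technical obstacle.

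Once $\mathcal{F}_1^0$ has been upgraded to a foliation $\mathcal{F}_1$ on $M_1$ that is linear on $\partial V$ and $k$-interval hyperbolic on each JSJ component of $\partial M_1$, the remaining step is to build matching foliations on the adjacent rational homology solid tori $V_1, \ldots, V_r$. This is done inductively in the depth of the JSJ tree, invoking Theorem \ref{theorem: gluing} on each $V_j$ to produce a taut foliation detecting $[\beta_j]$ and then applying Proposition \ref{fixing the pieces} to arrange $k$-interval hyperbolic behavior on the shared torus $T_j$ with matching $k$ and slope. The glued foliations agree on the JSJ tori by construction, so \cite[Lemma 6.9]{BC} produces a (horizontal) co-oriented taut foliation on $V$, which strongly detects $[\alpha]$ by construction of $\mathcal{F}_1$ on $\partial V$.
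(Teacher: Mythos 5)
Your proposal is correct in spirit, and the forward direction matches the paper exactly. For the backward direction, however, the paper takes a technically cleaner route than you describe, and the deviation matters.

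Where you propose to work directly on $M_1$ and apply Propositions \ref{elliptic to ih}, \ref{compact 2}, \ref{compact 3} \emph{selectively}—modifying the JSJ tori while leaving $\partial V$ linear—the paper instead passes to the Dehn filling $M_1([\alpha])$. Concretely: after reducing to the case that $[\alpha]$ is rational and horizontal (so that $M_1([\alpha])$ is Seifert fibred and irreducible), the condition $[\beta_*^{(1)}] \in \mathcal{D}_{str}(M_1;\partial V)$ is reinterpreted as the existence of a taut foliation $\mathcal{F}_0$ on the \emph{filled} manifold $M_1([\alpha])$ detecting the projection of $[\beta_*]$ to $\mathcal{S}(M_1([\alpha]))$. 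One then applies Proposition \ref{fixing the pieces} to $M_1([\alpha])$ as stated—no selective version is needed, since $\partial V$ has been capped off and simply is not a boundary component anymore. The resulting $k$-interval hyperbolic foliation on $M_1([\alpha])$ is glued to foliations on the $V_j$ (whose existence Proposition \ref{fixing the pieces} also provides), and one recovers a foliation on $V$ that is linear of slope $[\alpha]$ on $\partial V$ by cutting out a tubular neighbourhood of the core of the filling solid torus.

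This is not a cosmetic difference, for two reasons. First, Proposition \ref{elliptic to ih} is stated selectively, but Propositions \ref{compact 2} and \ref{compact 3} are not; they produce interval-hyperbolic behaviour on \emph{every} boundary component. Your plan to "simply omit $\partial V$" is an unproved variant, and one would have to re-examine whether the arc counts in the proofs of \ref{compact 2}/\ref{compact 3} go through with a boundary component frozen. Second—and this is the real content—the Dehn filling formulation makes the excluded cases in the hypothesis \emph{precisely} the cases where the inequality $|\partial M_1([\alpha])| + |\partial F| \geq 3$ required by Proposition \ref{fixing the pieces} fails: $M_1([\alpha]) \cong S^1\times D^2$ gives $1+1=2$, and the annulus-with-cone-points case with $|\partial F|=1$ gives $1+1=2$. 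Your informal explanations of the excluded cases ("the fibration is essentially unique, forcing new holonomy into $\partial V$"; "no horizontal room to spin up interval-hyperbolic behavior") gesture at this but do not pin it down, and you yourself flag this as "the main technical obstacle" without resolving it. The filling trick is exactly what resolves it.

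So: the overall architecture (forward direction by restriction; backward direction by building pieces with matching interval-hyperbolic boundary behaviour and gluing) is right, but you should replace the selective-modification-on-$M_1$ step by the Dehn filling reduction, after which the paper's already-proved Proposition \ref{fixing the pieces} applies verbatim and the role of the excluded cases becomes transparent.
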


\begin{remark}
{\rm Suppose that $[\alpha]$ is rational and horizontal and $M_1([\alpha])$ fibres over the circle with fibre $F$ such that $|\partial F| = 1$ and $[\partial F]$ is not strongly detected in $V_1 = \overline{V \setminus M_1}$. Since the only co-oriented taut foliation on $S^1 \times D^2$ is the fibration by disks, if $M_1([\alpha]) \cong S^1 \times D^2$, then $[\alpha]$ is not strongly detected in $V$ even if it extends to a $($horizontal$)$ gluing coherent element $[\beta_*]$ such that $[\beta_*^{(1)}] \in \mathcal{D}_{str}(M_1; \partial V; J; [\alpha_*])$. Thus these cases must be excluded from the theorem. (This corresponds to the notion of ``gluing obstructed" in \cite{BC}.) 
On the other hand, it is not clear to us exactly when the exclusion of the case that $M_1$ has base orbifold an annulus with cone points, $\mathcal{D}(V_1) = \{[\partial F]\}$ and $\mathcal{D}_{str}(V_1) = \emptyset$ is necessary. The problem is to understand when the rational longitude of $M_1([\alpha])$ is detected by a smooth foliation with hyperbolic behavior on $\partial M_1([\alpha])$. This is not always possible when we restrict to transversely projective foliations.}
\end{remark}

\begin{proof}[Proof of Theorem \ref{theorem: gluing 2}]
Without loss of generality we can suppose that $n > 1$. 

As in the proof of Theorem \ref{theorem: gluing}, a (horizontal) co-oriented taut foliation $\mathcal{F}$ on $V$ which strongly detects $[\alpha]$ determines a (horizontal) gluing coherent element $[\beta_*]$ which extends $[\alpha]$ such that $[\beta_*^{(1)}] \in \mathcal{D}_{str}(M_1; \partial V)$.   

Consider the converse and fix a (horizontal) gluing coherent element $[\beta_*]$ which extends $[\alpha]$ such that $[\beta_*^{(1)}] \in \mathcal{D}_{str}(M_1; \partial V)$. The proof of Theorem \ref{theorem: gluing} implies that detected irrational slopes are strongly detected. (The foliations which detect a given slope are obtained by modifying certain model foliations on the pieces. The model foliations strongly detect irrational slopes and the modifications do not alter the foliations near boundary components for which the slope is irrational.) Thus we can suppose that $[\alpha]$ is rational. We can also suppose that $[\alpha]$ is horizontal as otherwise $V$ would be the twisted $I$ bundle over the Klein bottle (\cite[Lemma 6.4]{BC}), contradicting the assumption that $n > 1$. Then $M_1([\alpha])$ is Seifert fibred and as $n > 1$, $M_1([\alpha])$ is irreducible. Since $[\beta_*^{(1)}] \in \mathcal{D}_{str}(M_1; \partial V)$, there is a foliation $\mathcal{F}_0$ on $M_1([\alpha])$ which detects the projection of $[\beta_*]$ to $\mathcal{S}(M_1([\alpha]))$. Given the cases for $M_1$ that we have excluded, Proposition \ref{fixing the pieces} implies that for any odd $k \gg 0$ we can suppose that $\mathcal{F}_0$ is $k$-interval hyperbolic on $\partial M_1([\alpha])$. The same result allows us to choose co-oriented taut foliations on the pieces of $\overline{V \setminus M_1}$ which piece together (cf. Lemma 6.9 of \cite{BC}) with $\mathcal{F}_0$ to form a co-oriented taut foliation on $V$ which strongly detects $[\alpha]$. This completes the proof. 
\end{proof}

\begin{corollary} \label{cor: strong detection}
Suppose that $\partial M_1 = \partial V \cup T_1 \cup \ldots \cup T_r$ where $r \geq 0$ and $V_j$ is the graph manifold rational homology solid torus in $V$ bounded by $T_j$ $(1 \leq j \leq r)$. Set $D_j = \mathcal{D}(V_j)$ and $D_* = D_1 \times \ldots \times D_{r-1}$. Then $\mathcal{D}(V) \setminus \mathcal{D}_{str}(V)$ is finite and consists of rational slopes. Further, $\mathcal{D}_{str}(V) = \mathcal{D}_{str}(M_1; \partial V; D_*)$ unless, perhaps, one of the following situations involving $V_1 = \overline{V \setminus M_1}$ arises: 

$(a)$ $M_1$ is a cable space and there is a rational, horizontal slope $[\alpha] \in \mathcal{D}_{str}(M_1; \partial V; D_*)$ such that $M_1([\alpha]) \cong S^1 \times D^2$ and the meridional slope of $M_1([\alpha])$ is not strongly detected in $V_1$.  

$(b)$ $[\alpha] = [\lambda_V]$, $|\partial M_1| = 2$, say $\partial M_1= \partial V \cup T$, $M_1$ fibres over the circle with fibre $F$ such that $|\partial F \cap T| = 1$, $[\partial F]$ is not strongly detected in $V_1$, and $\mathcal{D}(V_1) = \{[\partial F]\}$.  
\end{corollary}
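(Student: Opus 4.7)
The plan is to induct on the number $n$ of JSJ pieces of $V$. When $n = 1$ we have $V = M_1$ Seifert and $r = 0$, so $D_*$ is the empty product, $\mathcal{D}(V) = \mathcal{D}(M_1; \partial V; \emptyset; \emptyset)$, and $\mathcal{D}_{str}(V) = \mathcal{D}_{str}(M_1; \partial V; \emptyset; \emptyset)$. Theorem \ref{detected slopes seifert} then gives both conclusions directly: $\mathcal{D}(V) \setminus \mathcal{D}_{str}(V)$ is a finite set of rational slopes (at most the two endpoints of a non-degenerate interval, or $\{[h]\}$), and the identification $\mathcal{D}_{str}(V) = \mathcal{D}_{str}(M_1; \partial V; D_*)$ holds trivially since $V_1$ does not exist and neither case (a) nor (b) can arise.

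For the inductive step $(n > 1)$, I would first apply the inductive hypothesis to each $V_j$ (which has fewer JSJ pieces than $V$), together with Corollary \ref{cor: detection}, to conclude that $D_j = \mathcal{D}(V_j)$ is a non-empty, closed, connected subset of $\mathcal{S}(T_j)$ with rational frontier, and that $D_j \setminus \mathcal{D}_{str}(V_j)$ is a finite set of rational slopes. These are exactly the hypotheses needed to apply Theorem \ref{detected slopes seifert} and Corollary \ref{cor: relative detection} to $M_1$ with boundary-slope families $D_*$. Theorem \ref{theorem: gluing 2} then yields the equivalence $[\alpha] \in \mathcal{D}_{str}(V)$ if and only if $[\alpha] \in \mathcal{D}_{str}(M_1; \partial V; D_*)$ for every $[\alpha]$ not meeting the hypotheses of either excluded scenario in that theorem. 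These two scenarios translate precisely to cases (a) and (b) of the corollary: in both, the condition $|\partial F|=1$ combined with the Seifert structure of $M_1$ forces $r = 1$, so that $V_1 = \overline{V \setminus M_1}$ is a single connected graph manifold rational homology solid torus that agrees with the $V_1$ appearing in Theorem \ref{theorem: gluing 2}; the base-orbifold conditions (cable space versus annulus with cone points) then match (a) and (b) respectively. This establishes the second assertion of the corollary.

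For the first assertion, observe that
\[
\mathcal{D}(V) \setminus \mathcal{D}_{str}(V) \subseteq \bigl(\mathcal{D}(M_1; \partial V; D_*) \setminus \mathcal{D}_{str}(M_1; \partial V; D_*)\bigr) \cup E,
\]
where $E$ consists of the exceptional slopes arising from cases (a) and (b). Corollary \ref{cor: relative detection} guarantees that the first set on the right is finite and rational (at most two endpoints). The exceptional set $E$ is likewise finite and rational: case (b) contributes only the rational longitude $[\lambda_V]$, and case (a) contributes only the finitely many rational horizontal slopes $[\alpha]$ on $\partial V$ for which $M_1([\alpha]) \cong S^1 \times D^2$. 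The main technical obstacle is verifying the exact correspondence between the excluded scenarios of Theorem \ref{theorem: gluing 2} and cases (a), (b)—in particular, checking that the combinatorial constraints (notably $|\partial F| = 1$) really do force $r = 1$ and a connected $V_1$, so that the two uses of the notation ``$V_1$'' agree, and that no further exceptional slopes slip in beyond those listed.
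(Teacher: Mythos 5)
Your reduction to Theorem \ref{theorem: gluing 2} and Corollary \ref{cor: detection} for the identity $\mathcal{D}_{str}(V) = \mathcal{D}_{str}(M_1; \partial V; D_*)$ outside cases (a) and (b) is correct, and your matching of (a), (b) with the two excluded scenarios of Theorem \ref{theorem: gluing 2} is essentially right. The gap is in the finiteness argument for the exceptional set $E$ coming from case (a). You assert that there are ``only the finitely many rational horizontal slopes $[\alpha]$ on $\partial V$ for which $M_1([\alpha]) \cong S^1 \times D^2$,'' but this is false: for a cable space $M_1$, \emph{every} slope $[\alpha]$ with $\Delta([\alpha], h) = 1$ yields a solid torus upon filling, and there are infinitely many of these (they accumulate at $[h]$). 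Worse, when $[h] \in \mathcal{D}(V_1)$, Theorem \ref{detected slopes seifert}(2) shows that $\mathcal{D}_{str}(M_1; \partial V; D_*)$ is a subset that approaches $[h]$, so a priori infinitely many of these slopes could lie in $\mathcal{D}_{str}(M_1; \partial V; D_*)$ and satisfy the remaining hypotheses of (a).

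The missing ingredient is a genuinely inductive step, exactly as in the paper's proof. For each $[\alpha]$ satisfying (a), the meridional slope $[\alpha_1]$ of $M_1([\alpha])$ on $T_1$ is the \emph{unique} slope detected by a taut foliation in $M_1([\alpha]) \cong S^1 \times D^2$; thus $([\alpha],[\alpha_1])$ is the unique detected pair on $\partial M_1$ in which $[\alpha]$ is strongly detected, and (for fillings of a cable space yielding solid tori) $[\alpha]$ and $[\alpha_1]$ determine one another. The hypothesis in (a) then places $[\alpha_1]$ in $\mathcal{D}(V_1) \setminus \mathcal{D}_{str}(V_1)$, and it is the finiteness of \emph{that} set—guaranteed by the inductive hypothesis applied to $V_1$—which forces the set of $[\alpha]$ satisfying (a) to be finite. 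Without this transfer to $V_1$ via the meridional slope, the finiteness claim in your argument does not hold.
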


\begin{proof}
Theorem \ref{theorem: gluing 2} implies that $\mathcal{D}_{str}(V) = \mathcal{D}_{str}(M_1; \partial V; D_*)$ if we exclude cases (a) and (b), so we need only prove that $\mathcal{D}(V) \setminus \mathcal{D}_{str}(V)$ is finite in the general case.~(Rationality comes for free since irrational slopes are always strongly detected.) Corollary \ref{cor: detection} implies that each $D_j$ is a closed, connected subset of the circle $\mathcal{S}(\partial V_j)$ whose frontier consists of rational slopes. 

If we are not in case (a) or (b) of the corollary's statement, then $\mathcal{D}_{str}(V) = \mathcal{D}_{str}(M_1; \partial V; D_*) \subseteq \mathcal{D}(M_1; \partial V; D_*) = \mathcal{D}(V)$ so that  $\mathcal{D}(V) \setminus \mathcal{D}_{str}(V) =  \mathcal{D}(M_1; \partial V; D_*) \setminus \mathcal{D}_{str}(M_1; \partial V; D_*)$. The latter is finite by Theorem \ref{detected slopes seifert}. In general, Theorem \ref{theorem: gluing 2} implies that $\mathcal{D}(V) \setminus \mathcal{D}_{str}(V)$ is contained in the union of $\{[\lambda_V]\}$, the finite set $\mathcal{D}(M_1; \partial V; D_*) \setminus \mathcal{D}_{str}(M_1; \partial V; D_*)$, and the set slopes $[\alpha]$ which satisfy (a). In the latter case, $V = M_1 \cup_{T_1} V_1$ where $M_1$ is a cable space, $[\alpha]$ is a rational slope such that $M_1([\alpha]) \cong S^1 \times D^2$, and the meridional slope $[\alpha_1]$ of $M_1([\alpha])$ is not strongly detected in $V_1$. Since $M_1([\alpha]) \cong S^1 \times D^2$, $[\alpha_1]$ is the unique slope in $\mathcal{D}(M_1;T_1; \partial V; [\alpha]) = \mathcal{D}_{str}(M_1([\alpha])) = \mathcal{D}(M_1([\alpha]))$. Thus $([\alpha], [\alpha_1])$ is the unique pair of slopes on $\partial M_1$ which is detected in $M_1$ in such a way that $[\alpha]$ is strongly detected. By hypothesis, $[\alpha_1] \in \mathcal{D}(V_1) \setminus \mathcal{D}_{str}(V_1)$, and since $\mathcal{D}(V_1) \setminus \mathcal{D}_{str}(V_1)$ is finite (by our induction hypothesis), the set of all slopes $[\alpha]$ on $\partial V$ which satisfy (a) is finite. Hence $\mathcal{D}(V) \setminus \mathcal{D}_{str}(V)$ is finite.
\end{proof}

\subsection{The space of detected slopes of a graph manifold rational homology solid torus} \label{subsec: slope detection graph} 

As above, $V$ is a graph manifold rational homology solid torus. We denote by $[h] \in \mathcal{S}(\partial V)$ the slope of the Seifert fibre of $M_1$. Note that $\overline{V \setminus M_1}$ is a disjoint union of $r \geq 0$ graph manifold rational homology solid tori $V_1, \ldots, V_{r}$. 

The next theorem and its corollary yield a refined version of Theorem \ref{detection in graph manifolds qhst}.

\begin{theorem} \label{refined detection in graph manifolds qhst}
Let $V$ be as above. Assume that if $[\alpha]$ is rational and horizontal and $M_1([\alpha])$ fibres over the circle with fibre $F$ such that $|\partial F| = 1$ and $[\partial F]$ is not strongly detected in $V_1 = \overline{V \setminus M_1}$, then neither of the following occurs 
\vspace{-.3cm} 
\begin{itemize}
\item $M_1$ is a cable space and $\Delta([\alpha], h) = 1$, so $M_1([\alpha]) \cong S^1 \times D^2$;

\item $M_1$ has base orbifold an annulus with cone points and $\mathcal{D}(V_1) = \{[\partial F]\}$, so $[\alpha] = [\lambda_V]$. 

\end{itemize}
\vspace{-.3cm}
Then

$(1)$ $\mathcal{D}(V)$ is a closed, connected subset of the circle $\mathcal{S}(\partial V)$ whose frontier consists of rational slopes. Further, $[\lambda_V] \in \mathcal{D}(V)$. 

$(2)$ If $M$ has base orbifold $Q(a_1, \ldots, a_n)$ then 

\indent \hspace{.5cm} $(a)$ $\mathcal{D}(V) = 
\left\{ \begin{array}{ll} \{[h]\} & \hbox{ if there is no $j$ such that } [h] \in \mathcal{D}(V_j)  \\ & \\ \mathcal{S}(\partial V) & \hbox{ if there is a $j$ such that } [h]  \in \mathcal{D}(V_j) \end{array} \right.$

\indent \hspace{.5cm} $(b)$ $\mathcal{D}_{str}(V) = 
\left\{ \begin{array}{ll}\{[h]\} & \hbox{ if }  M \cong N_2\\ & \\ \emptyset &  \hbox{ if there is no $j$ such that } [h] \in \mathcal{D}(V_j)  \hbox{ and } M \not \cong N_2  \\ & \\ \mathcal{S}(\partial V) \setminus \{[h]\} & \hbox{ if there is a $j$ such that } [h]  \in \mathcal{D}(V_j)  \end{array} \right.$

$(3)$ If $M$ has base orbifold $P(a_1, \ldots, a_n)$ and  

\indent \hspace{.5cm} $(a)$ $[h] \not \in \mathcal{D}(V_j)$ for each $j$, then $\mathcal{S}(T_r; S_*)$ is a non-empty interval of horizontal slopes. \\ \indent \hspace{1.1cm} 
Further either 

\indent \hspace{1.1cm} $(i)$ $\mathcal{D}_{str}(V) = \mathcal{D}(V)$ consists of a single rational slope, or 

\indent \hspace{1.1cm} $(ii)$ $(\mathcal{D}(V), \mathcal{D}_{str}(V) ) \cong ([0,1], (0,1))$.

\indent \hspace{.5cm} $(b)$ $[h] \in \mathcal{D}(V_j)$ for exactly one $j$ then $\mathcal{D}(V)$ is a closed, connected subset of $\mathcal{S}(\partial V)$ containing  \\ \indent \hspace{1.1cm} $\{[h]\}$. Further, $\mathcal{D}_{str}(V) = \hbox{int}(\mathcal{D}(V)) \setminus \{[h]\}$

\indent \hspace{.5cm} $(c)$ $[h] \in \mathcal{D}(V_j)$ for exactly two or more $j$ then $\mathcal{D}(V)  = \mathcal{S}(\partial V)$ and $\mathcal{D}_{str}(V) = \mathcal{S}(\partial V) \setminus \{[h]\}$.

\end{theorem}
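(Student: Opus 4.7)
The plan is to reduce everything to Theorem \ref{detected slopes seifert} applied to the piece $M_1$, with the sets $S_j$ taken to be the detected-slope sets $D_j := \mathcal{D}(V_j)$ of the outlying graph manifold pieces $V_1,\ldots,V_r$. By Corollary \ref{cor: detection}(1) we already have the identity $\mathcal{D}(V) = \mathcal{D}(M_1;\partial V;\emptyset;D_*)$, and by the inductive content of Corollary \ref{cor: detection}(2)--(3), each $D_j$ is a closed, connected subset of $\mathcal{S}(\partial V_j)$ whose frontier is rational and which contains $[\lambda_{V_j}]$. These are exactly the hypotheses on the $S_j$ that allow us to feed $D_*$ into Theorem \ref{detected slopes seifert}. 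Assertion (1) of the theorem then follows directly from Corollary \ref{cor: detection}.

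For assertions (2) and (3), the crucial observation is that the quantity $v(S_*)$ appearing in Theorem \ref{detected slopes seifert}, which is by definition $|\{j : [h]\in S_j\}|$, coincides under our identification $S_j=D_j$ with the number of indices $j$ such that $[h]\in\mathcal{D}(V_j)$. Thus the three cases of Theorem \ref{refined detection in graph manifolds qhst} — ``no $j$ with $[h]\in\mathcal{D}(V_j)$'', ``exactly one $j$'', ``two or more $j$'' — correspond precisely to the regimes $v(S_*)=0$, $v(S_*)=1$, $v(S_*)\ge 2$ in Theorem \ref{detected slopes seifert}. So parts (2)(a) and (3)(a)--(c) for $\mathcal{D}(V)$ can be read off directly from Theorem \ref{detected slopes seifert}(1)(a) and (2), after noting that the case $M_1\cong S^1\times S^1\times I$ cannot occur here because $M_1$ is a JSJ piece (and we may assume $n>1$; the case $n=1$ is handled separately, with the solid-torus and product cases excluded from the definition of JSJ piece).

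For $\mathcal{D}_{str}(V)$, the key input is Corollary \ref{cor: strong detection}: the exclusions placed in the hypotheses of Theorem \ref{refined detection in graph manifolds qhst} are precisely the exceptional cases (a) and (b) of that corollary, so we obtain the clean equality
\[
\mathcal{D}_{str}(V) = \mathcal{D}_{str}(M_1;\partial V;\emptyset;D_*).
\]
Applying Theorem \ref{detected slopes seifert} then yields the formulas for $\mathcal{D}_{str}(V)$ in (2)(b) and in cases (3)(a)(i), (3)(a)(ii), (3)(c). The only mildly delicate case is (3)(b): Theorem \ref{detected slopes seifert}(2), $v(S_*)=1$, gives $\mathcal{D}_{str}(M_1;\partial V;\emptyset;D_*) = \mathcal{D}(M_1;\partial V;\emptyset;D_*)\setminus\{[h]\}$, while Corollary \ref{cor: relative detection} guarantees that any rational frontier point of $\mathcal{D}(M_1;\partial V;\emptyset;D_*)$ lies outside $\mathcal{D}_{str}(M_1;\partial V;\emptyset;D_*)$. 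Combining these two facts, and observing that the frontier of $\mathcal{D}(V)$ is rational (part (1)), we obtain $\mathcal{D}_{str}(V) = \operatorname{int}(\mathcal{D}(V))\setminus\{[h]\}$ as claimed.

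I expect the main obstacle to be the bookkeeping in case (3)(b), where one has to reconcile three slightly different descriptions: the literal formula $\mathcal{D}\setminus\{[h]\}$ from Theorem \ref{detected slopes seifert}, the endpoint observation in Corollary \ref{cor: relative detection}, and the desired form $\operatorname{int}(\mathcal{D}(V))\setminus\{[h]\}$. One must also check that the degenerate subcases (when $\mathcal{D}(V)$ collapses to a single slope $\{[h]\}$ or to a single non-vertical rational slope) are correctly accommodated by the interior formula; but in each degenerate situation both sides reduce either to $\emptyset$ or to an isolated rational slope, consistently with Theorem \ref{detected slopes seifert}. Apart from this, the argument is essentially a translation of the relative-detection theorem through the gluing identity, together with the induction underlying Corollary \ref{cor: detection}.
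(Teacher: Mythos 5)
Your proposal takes essentially the same route as the paper: the paper's proof of Theorem \ref{refined detection in graph manifolds qhst} is precisely to set $D_j = \mathcal{D}(V_j)$, invoke Corollaries \ref{cor: detection} and \ref{cor: strong detection} to obtain the identities $\mathcal{D}(V) = \mathcal{D}(M_1;\partial V; D_*)$ and $\mathcal{D}_{str}(V) = \mathcal{D}_{str}(M_1;\partial V; D_*)$, and then read the conclusions off Theorem \ref{detected slopes seifert}, with an induction on the number of pieces underlying the inputs. You carry out the same reduction, correctly noting that $v(D_*)$ counts the indices $j$ with $[h]\in\mathcal{D}(V_j)$ and that the hypotheses of the refined theorem exclude exactly cases (a) and (b) of Corollary \ref{cor: strong detection}.

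The one place where you are actually more explicit than the paper is case (3)(b), and you are right to be wary there: as literally stated, Theorem \ref{detected slopes seifert}(2) in the regime $v(S_*)=1$, $M \not\cong S^1\times S^1\times I$ reads $\mathcal{D}_{str} = \mathcal{D}\setminus\{[h]\}$, which does not by itself give $\operatorname{int}(\mathcal{D}(V))\setminus\{[h]\}$; one must also invoke Corollary \ref{cor: relative detection} (endpoints of a non-degenerate interval lie outside $\mathcal{D}_{str}$) and observe that the two statements are reconciled only once the formula in Theorem \ref{detected slopes seifert}(2) is read in the sharper form that the proof of that theorem actually yields, namely that the endpoints $c_{\min}, c_{\max}$ are not strongly detected. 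You gesture at this but phrase it as ``combining these two facts,'' which on its face combines two inconsistent formulas; it would be cleaner to say that the $v(S_*)=1$ argument in the proof of Theorem \ref{detected slopes seifert} (applied to $S_*^h$ and taking unions of open intervals $(c_{\min}^{(n)}, c_{\max}^{(n)})$) produces an open set on the horizontal side, so the correct conclusion is $\mathcal{D}_{str} = \operatorname{int}(\mathcal{D})\setminus\{[h]\}$, consistent with Corollary \ref{cor: relative detection}. Apart from this small wording issue, your argument is the paper's.
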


\begin{proof} 
If $M_1 = V$ the result follows from \cite{BC}. Suppose that the theorem holds for all graph manifold rational homology solid tori having fewer Seifert fibred pieces than $V$. Set $D_j = \mathcal{D}(V_j)$ and $D_* = D_1 \times \ldots \times D_{r-1}$. Corollaries \ref{cor: detection} and \ref{cor: strong detection} show that $\mathcal{D}(V) = \mathcal{D}(M_1; \partial V; D_*)$ and $\mathcal{D}_{str}(V) = \mathcal{D}_{str}(M_1; \partial V; D_*)$. Since the frontier of each $D_j$ is rational and $[\lambda_V] \in \mathcal{D}(V)$ (Corollary \ref{cor: detection}), Theorem \ref{refined detection in graph manifolds qhst} now follows from the Theorem \ref{detected slopes seifert}.

\end{proof}

\begin{corollary} \label{difference} Let $V$ be as above.  

$(1)$ $\mathcal{D}(V) \setminus \mathcal{D}_{str}(V)$ is finite and if $\mathcal{D}(V)$ is a non-degenerate interval, then its endpoints are rational and not contained in $\mathcal{D}_{str}(V)$. 

$(2)$ Assume that if $[\alpha]$ is rational and horizontal, $M_1([\alpha])$ fibres over the circle with fibre $F$ such that $|\partial F| = 1$ and $[\partial F]$ is not strongly detected in $V_1 = \overline{V \setminus M_1}$, then neither of the following occurs 
\vspace{-.3cm}
\begin{itemize}
\item $M_1$ is a cable space and $\Delta([\alpha], h) = 1$ $($so $M_1([\alpha]) \cong S^1 \times D^2$$)$;  

\vspace{.2cm} \item $M_1$ has base orbifold an annulus with cone points and $\mathcal{D}(V_1) = \{[\partial F]\}$, $($so $[\alpha] = [\lambda_V]$$)$; 
\end{itemize} 
\vspace{-.3cm}

$(a)$ If $M$ has base orbifold $Q(a_1, \ldots, a_n)$ then 
$$\mathcal{D}(V) \setminus \mathcal{D}_{str}(V) = \left\{\begin{array}{ll} \emptyset & \hbox{ if } V \cong N_2\\ & \\ \{[h]\} & \hbox{ if } V \not \cong N_2 \end{array} \right.$$

$(b)$ If $M$ has base orbifold $P(a_1, \ldots, a_n)$ then 
$$\mathcal{D}(V) \setminus \mathcal{D}_{str}(V) = \left\{\begin{array}{ll} \emptyset & \hbox{ if } \mathcal{D}(V) = \{[\lambda_V]\}  \\ & \\ \hbox{fr}(\mathcal{D}(V)) & \hbox{ if } \mathcal{D}(V) \ne \{[\lambda_V]\} \hbox{ and } [h] \not \in \mathcal{D}(V_j) \hbox{ for each } j \\ & \\ \hbox{fr}(\mathcal{D}(V)) \cup \{[h]\} & \hbox{ if } \mathcal{D}(V) \ne \{[\lambda_V]\} \hbox{ and } [h] \in \mathcal{D}(V_j) \hbox{ for some } j \end{array} \right.$$

$(c)$ If $[h] \in \mathcal{D}(V) \setminus \mathcal{D}_{str}(V)$ then either $M$ has base orbifold $Q(a_1, \ldots, a_n)$ and $V \not \cong N_2$ or $M$ has base orbifold $P(a_1, \ldots, a_n)$ and $[h] \in \mathcal{D}(V_j)$ for some $j$.  
\qed 
\end{corollary}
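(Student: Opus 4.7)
The plan is to deduce this corollary directly from the graph-manifold gluing characterisations in Corollaries \ref{cor: detection} and \ref{cor: strong detection} together with Theorem \ref{detected slopes seifert} applied to the outer piece $M_1 \subset V$ containing $\partial V$. Setting $D_j = \mathcal{D}(V_j)$ and $D_* = D_1 \times \ldots \times D_r$, those results give
\[
\mathcal{D}(V) = \mathcal{D}(M_1; \partial V; \emptyset; D_*)
\]
in general, and
\[
\mathcal{D}_{str}(V) = \mathcal{D}_{str}(M_1; \partial V; \emptyset; D_*)
\]
under the hypotheses of (2). Hence the content of the corollary is essentially a translation of Theorem \ref{detected slopes seifert} into the language of $V$, once one recognises that the counting parameter $v(D_*)$ in Theorem \ref{detected slopes seifert} equals the number of indices $j$ with $[h] \in \mathcal{D}(V_j)$.

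For (1), finiteness and rationality of $\mathcal{D}(V) \setminus \mathcal{D}_{str}(V)$ are already stated in Corollary \ref{cor: strong detection}. If $\mathcal{D}(V)$ is a non-degenerate interval, then its frontier points are rational by Corollary \ref{cor: detection}. To see they lie outside $\mathcal{D}_{str}(V)$, I would use the inclusion $\mathcal{D}_{str}(V) \subseteq \mathcal{D}_{str}(M_1; \partial V; \emptyset; D_*)$ (which holds in all cases, since the potential obstructions in Corollary \ref{cor: strong detection} only remove points from $\mathcal{D}_{str}(V)$) and apply the endpoint statement of Corollary \ref{cor: relative detection} to $\mathcal{D}_{str}(M_1; \partial V; \emptyset; D_*)$.

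For (2)(a), the base orbifold is $Q(a_1,\ldots,a_n)$, and Theorem \ref{detected slopes seifert}(1) describes both $\mathcal{D}(M_1; \partial V; \emptyset; D_*)$ and $\mathcal{D}_{str}(M_1; \partial V; \emptyset; D_*)$ in terms of $v(D_*)$ and whether $M_1 \cong N_2$. Noting that $V \cong N_2$ iff $M_1 \cong N_2$ (since $N_2$ has a single boundary torus, so no JSJ splitting of $V$ can leave a further piece beyond $M_1$), one checks directly that the difference $\mathcal{D}(V)\setminus \mathcal{D}_{str}(V)$ equals $\emptyset$ when $V \cong N_2$ and equals $\{[h]\}$ otherwise, regardless of whether $v(D_*) = 0$ or $v(D_*) > 0$. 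For (2)(b), the base orbifold is $P(a_1,\ldots,a_n)$ and I would simply read off the values of $\mathcal{D}(V)$ and $\mathcal{D}_{str}(V)$ from the three subcases of Theorem \ref{refined detection in graph manifolds qhst}(3): in the first subcase $\mathcal{D}(V)$ is a horizontal interval (so $[h]\notin \mathcal{D}(V)$) and the difference is either empty (single rational slope) or the two endpoints $\mathrm{fr}(\mathcal{D}(V))$; in the second and third subcases $[h] \in \mathcal{D}(V)$ and $\mathcal{D}_{str}(V) = \operatorname{int}(\mathcal{D}(V)) \setminus \{[h]\}$, so the difference is $\mathrm{fr}(\mathcal{D}(V)) \cup \{[h]\}$. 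Finally, (2)(c) is an immediate consolidation: in the orientable-base case, (2)(a) shows $[h]$ lies in the difference iff $V \not\cong N_2$, while in the non-orientable-base case, (2)(b) together with the observation that $\mathcal{D}(V)$ consists of horizontal slopes whenever $[h]\notin \mathcal{D}(V_j)$ for every $j$ forces $[h] \in \mathcal{D}(V_j)$ for some $j$.

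The only real work is bookkeeping: matching the parameter $v(D_*)$ to the criterion ``$[h]\in \mathcal{D}(V_j)$ for some $j$'', keeping track of when $V \cong N_2$ versus $M_1$ being a proper piece, and ensuring that Corollary \ref{cor: relative detection} supplies the endpoint exclusions needed to reconcile $\mathcal{D}_{str}(M_1;\partial V;\emptyset;D_*) = \mathcal{D}(M_1;\partial V;\emptyset;D_*) \setminus \{[h]\}$ with the sharper description $\mathcal{D}_{str}(V) = \operatorname{int}(\mathcal{D}(V)) \setminus \{[h]\}$ used in Theorem \ref{refined detection in graph manifolds qhst}(3)(b).
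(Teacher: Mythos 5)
Your overall plan---reading off the corollary from Corollaries \ref{cor: detection}, \ref{cor: strong detection}, \ref{cor: relative detection} and Theorems \ref{detected slopes seifert}, \ref{refined detection in graph manifolds qhst}---is exactly what the paper intends (the corollary is stated with a terminal $\qed$ and no separate proof). Your arguments for (1), (2)(a), and (2)(c) are correct, and the observation that $V \cong N_2$ if and only if $M_1 \cong N_2$ is the right bookkeeping point for (2)(a).

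There is, however, an unaddressed case in (2)(b). Your reading of subcases (3)(b)--(c) of Theorem \ref{refined detection in graph manifolds qhst} gives
$\mathcal{D}(V) \setminus \mathcal{D}_{str}(V) = \hbox{fr}(\mathcal{D}(V)) \cup \{[h]\}$
whenever $[h] \in \mathcal{D}(V_j)$ for some $j$. You then implicitly identify this with the third line of the displayed formula in (2)(b), which carries the restriction $\mathcal{D}(V) \ne \{[\lambda_V]\}$. But the degenerate case $\mathcal{D}(V) = \{[\lambda_V]\}$ with $[\lambda_V]$ vertical (i.e.\ $[\lambda_V] = [h]$) is not excluded by the hypotheses of the corollary (they constrain only \emph{horizontal} rational $[\alpha]$), and Lemma \ref{a point outer piece}(3) shows it can occur with $v(D_*)=1$, placing it squarely in subcase (3)(b). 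In that situation your formula gives $\hbox{fr}(\{[h]\}) \cup \{[h]\} = \{[\lambda_V]\}$, whereas the corollary's first line asserts $\emptyset$. Moreover, applying Corollary \ref{cor: strong detection} and Theorem \ref{detected slopes seifert}(2) (case $v(S_*)=1$, $M_1 \not\cong S^1 \times S^1 \times I$) directly yields $\mathcal{D}_{str}(V) \subseteq \mathcal{D}_{str}(M_1;\partial V;D_*) = \mathcal{D}(V)\setminus\{[h]\} = \emptyset$, so the difference genuinely equals $\{[\lambda_V]\}$ here, not $\emptyset$. You need to either exclude this case explicitly, explain why the corollary's statement must be read as implicitly excluding it (for instance by noting that the parenthetical remark in the hypothesis, ``so $[\alpha]=[\lambda_V]$,'' ties that excluded configuration to the degenerate longitude case), or flag the corollary's formula as needing the qualifier that $[\lambda_V]$ be horizontal in the first line. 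As written, the proposal asserts a match with the corollary that it does not actually establish in this edge case.

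A minor related point: Theorem \ref{detected slopes seifert}(2), $v(S_*)=1$, gives $\mathcal{D}_{str} = \mathcal{D}\setminus\{[h]\}$, while Theorem \ref{refined detection in graph manifolds qhst}(3)(b) records $\mathcal{D}_{str}(V) = \operatorname{int}(\mathcal{D}(V))\setminus\{[h]\}$, and you silently pass from the latter form to the corollary's $\hbox{fr}(\mathcal{D}(V))\cup\{[h]\}$. The endpoint exclusion of Corollary \ref{cor: relative detection} is what reconciles these two formulas when $\mathcal{D}(V)$ is a proper non-degenerate interval, and it is worth saying so explicitly, since it is exactly the content of the last sentence of your outline.
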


\section{Graph manifold rational homology solid tori with degenerate sets of detected slopes} \label{sec: degenerate}

In this section we consider the degenerate case of graph manifold rational homology solid tori $V$ for which $\mathcal{D}(V)$ is a point. In this case, $\mathcal{D}(V) = \{[\lambda_V]\}$ by Corollary \ref{cor: detection}. Throughout we take $M$ to be the piece of $V$ which contains $\partial V$. 

Suppose that $\partial V, T_1, \ldots, T_r$ are the boundary tori of $M$ and $V_1, \ldots, V_r$ the graph manifold rational homology solid tori in $V$ where $T_i = \partial V_i$. Set
$$D_* = \mathcal{D}(V_1) \times \ldots \times \mathcal{D}(V_r)$$ 
and recall that $v( D_*) = |\{ j : [h] \in \mathcal{D}(V_j) \}|$.

\begin{lemma} \label{a point outer piece} 
Let $V$ be a graph manifold rational homology solid torus which admits a co-oriented taut foliation and suppose that $M, T_i, V_i$ are as above.  

$(1)$ If $M$ has base orbifold $Q(a_1, \ldots , a_n)$, then $\mathcal{D}(V) = \{[\lambda_V]\}$ if and only if $v(D_*) = 0$. 

$(2)$  If $M$ has base orbifold $P(a_1, \ldots , a_n)$ and $[\lambda_V]$ is horizontal in $M$, then $\mathcal{D}(V) = \{[\lambda_V]\}$ if and only if $v(D_*) = 0$, $\mathcal{D}(V_i) = \{[\lambda_{V_i}]\}$ for each $i$, and $\mathcal{D}(M; T; ([\lambda_{V_1}], \ldots, [\lambda_{V_r}])) = \{[\lambda_V]\}$. In this case $M$ fibres over the circle with fibre which detects $([\lambda_V], [\lambda_{V_1}], \ldots, [\lambda_{V_r}])$. 

$(3)$ If $M$ has base orbifold $P(a_1, \ldots , a_n)$ and $[\lambda_V]$ is vertical in $M$, then $\mathcal{D}(V) = \{[\lambda_V]\}$ if and only if $v(D_*) = 1$ and if $i$ is the index for which $[h] \in \mathcal{D}(V_i)$, then $\mathcal{D}(V_i) = \{[h]\}$. 

\end{lemma}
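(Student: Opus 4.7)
The plan is to apply Corollary \ref{cor: detection}, which identifies $\mathcal{D}(V)$ with $\mathcal{D}(M;\partial V;\emptyset;D_*)$, and then read the three cases off the structural descriptions in Theorem \ref{detected slopes seifert} and Theorem \ref{refined detection in graph manifolds qhst}. Since $[\lambda_V]\in\mathcal{D}(V)$ always (Corollary \ref{cor: detection}), the hypothesis $\mathcal{D}(V)=\{[\lambda_V]\}$ is asking for the minimum possible detection set and picks out a unique alternative in each of these theorems.

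Case (1) is immediate: Theorem \ref{refined detection in graph manifolds qhst}(2)(a) gives $\mathcal{D}(V)=\{[h]\}$ when $v(D_*)=0$ and $\mathcal{D}(V)=\mathcal{S}(\partial V)$ when $v(D_*)>0$, so $\mathcal{D}(V)$ is a single point exactly when $v(D_*)=0$, and in that case $[\lambda_V]=[h]$ follows from $[\lambda_V]\in\mathcal{D}(V)$.

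For Case (2), assume $\mathcal{D}(V)=\{[\lambda_V]\}$ with $[\lambda_V]$ horizontal. First, $v(D_*)=0$, because either alternative in Theorem \ref{refined detection in graph manifolds qhst}(3)(b),(c) places $[h]\neq[\lambda_V]$ into $\mathcal{D}(V)$. Writing $\mathcal{D}(V)=\bigcup_{[\alpha_*]\in D_*}\mathcal{D}(M;\partial V;[\alpha_*])$ and invoking the lower bound $[m_0(\tau_*),m_1(\tau_*)]\subseteq\mathcal{T}(M;T_r;\tau_*)$ from the proof of Theorem \ref{detected slopes seifert}, a single-point union requires $m_1(\tau_*)-m_0(\tau_*)=0$ for every $\tau_*\in\tau(D_*)$; tracking the integer-floor data in $b_0(\tau_*)$ and the values of $s_0(\tau_*)$ then forces each factor $D_j$ to consist of a single rational slope, which must be $\{[\lambda_{V_j}]\}$ since $[\lambda_{V_j}]\in D_j$. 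The third condition $\mathcal{D}(M;\partial V;([\lambda_{V_1}],\ldots,[\lambda_{V_r}]))=\{[\lambda_V]\}$ is then just the resulting restatement of $\mathcal{D}(V)$, and the converse is immediate once $D_*$ collapses to a point. For the fibration conclusion, the ``single rational slope'' alternative in Theorem \ref{detected slopes seifert}(2) (with $v(S_*)=0$ and $M\not\cong S^1\times S^1\times I$) gives $\mathcal{D}_{str}=\mathcal{D}=\{[\lambda_V]\}$, and a horizontal foliation strongly detecting a rational slope whose parameter space is zero-dimensional comes, via Lemma \ref{l: condition for fibring}, from a representation with finite cyclic projection to $PSL(2,\mathbb R)_k$, hence from a genuine fibration over $S^1$ whose fibre meets $\partial V$ and each $T_j$ along the prescribed rational longitudes.

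Case (3) runs along the same lines using the $v(S_*)=1$ portion of Theorem \ref{detected slopes seifert}(2). Necessity of $v(D_*)=1$ comes from Theorem \ref{refined detection in graph manifolds qhst}(3): $v(D_*)=0$ yields a horizontal interval disjoint from $[h]=[\lambda_V]$, and $v(D_*)>1$ yields the whole circle. Let $i$ be the unique index with $[h]\in D_i$. The proof of Theorem \ref{detected slopes seifert}(2) describes $\mathcal{D}(V)\setminus\{[h]\}$ as the union, over $[\alpha_*]\in S_*^h:=\{[\alpha_*]\in D_*:v([\alpha_*])=0\}$, of non-empty horizontal detection sets on $T_r$; hence $\mathcal{D}(V)\setminus\{[h]\}=\emptyset$ iff $S_*^h=\emptyset$, and since $D_j$ avoids $[h]$ for each $j\neq i$, this is equivalent to $D_i=\{[h]\}$. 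The converse is then tautological. The main obstacle in Cases (2) and (3) is the monotonicity/combinatorial argument controlling how the integer-floor terms in $b_0(\tau_*)$ (together with the values $s_0(\tau_*),i_0(\tau_*)$) transition as $\tau_*$ moves through $\tau(D_*)$, which is what rules out non-trivial $D_j$'s while leaving the global detection set degenerate.
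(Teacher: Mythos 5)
Your overall plan---identify $\mathcal{D}(V)$ with $\mathcal{D}(M;\partial V;D_*)$ via Corollary \ref{cor: detection} and then read off alternatives from the structural theorems---matches the paper's, and Case (1) is essentially the same argument (though it is cleaner to invoke Theorem \ref{detected slopes seifert} directly rather than Theorem \ref{refined detection in graph manifolds qhst}, whose statement carries the cable/annulus exclusions). The difficulty is in Cases (2) and (3), where you attempt a genuinely different, Seifert-piece-level argument that I do not think closes the key gap.

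For Case (2), the paper derives $\mathcal{D}(V_i)=\{[\lambda_{V_i}]\}$ by a graph-manifold-level argument: Theorem \ref{theorem: gluing} combined with the method of proof of Case 2 of \cite[Lemma 10.3]{BC}, which produces from any non-degenerate $\mathcal{D}(V_i)$ a corresponding non-degenerate subset of $\mathcal{D}(V)$. Your proposal instead tries to force each $D_j$ to be a point from the condition $m_1(\tau_*)-m_0(\tau_*)=0$ for all $\tau_*\in\tau(D_*)$. This does not work as stated: if $\tau(D_j)$ is a non-degenerate rational interval contained strictly inside a single integer interval $(\lfloor c_j\rfloor,\lfloor c_j\rfloor+1)$, then $b_0(\tau_*),s_0(\tau_*),i_0(\tau_*)$---and hence $m_0$ and $m_1$---are constant as $\tau_j$ varies, so $m_1-m_0$ stays zero even though $D_j$ is non-degenerate. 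Moreover $[m_0(\tau_*),m_1(\tau_*)]\subseteq\mathcal{T}(M;T_r;\tau_*)$ is only a lower bound; $\mathcal{T}(M;T_r;\tau_*)$ may spill into $(m_0-1,m_0)$ and $(m_1,m_1+1)$, and controlling that spillage is exactly the content you would have to supply but leave at the level of ``tracking the integer-floor data.'' Your fibration conclusion has a similar gap: Lemma \ref{l: condition for fibring} tells you which $\mathcal{F}(\rho)$ are fibrations, but you still have to show that the only foliations detecting $([\lambda_V],[\lambda_{V_1}],\ldots,[\lambda_{V_r}])$ are of that form with finite cyclic projection; the paper outsources this to \cite[Lemma 10.4]{BC}, and your ``zero-dimensional parameter space'' remark is an assertion rather than an argument.

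Case (3) has the same shape of difficulty. The direction $S_*^h\ne\emptyset\Rightarrow\mathcal{D}(V)\setminus\{[h]\}\ne\emptyset$ is fine since $\mathcal{T}(M;T_r;\tau_*)$ is nonempty. But the substantive implication---that $\mathcal{D}(V)=\{[\lambda_V]\}$ forces $\mathcal{D}(V_i)=\{[h]\}$ at the unique vertical index $i$---is again obtained in the paper by the graph-manifold argument of Case 1 of \cite[Lemma 10.3]{BC}, and replacing that by the $m_0/m_1$ bookkeeping runs into the same obstruction as in Case (2). In short, the easy directions and the necessity of $v(D_*)=0$ or $v(D_*)=1$ are handled correctly, but the implications that force the inner detection sets $D_j$ to be points, and the fibration conclusion, are not established by the proposal as written.
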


\begin{proof}
Recall from Corollary \ref{cor: detection} that $\mathcal{D}(V) = \mathcal{D}(M; T; D_*)$.  

When $M$ has base orbifold $Q(a_1, \ldots , a_n)$, Theorem \ref{detected slopes seifert}  implies that 
$$\mathcal{D}(M; T; D_*) =
\left\{ \begin{array}{ll} \{[h]\} & \hbox{ if } v(D_*) = 0 \\ & \\ \mathcal{S}(T_r) & \hbox{ if } v(D_*) > 0 \end{array} \right.$$ 
which shows that assertion (1) holds. 

Suppose that $M$ has base orbifold $P(a_1, \ldots , a_n)$ and $[\lambda_V]$ is horizontal in $M$. If $\mathcal{D}(V) = \{[\lambda_V]\}$, then Theorem \ref{theorem: gluing} combines with the method of proof of Case 2 of the proof of \cite[Lemma 10.3]{BC} to show that if the closed, connected subset $\mathcal{D}(V_i)$ of the circle $\mathcal{S}(V_i)$ is not $\{[\lambda_{V_i}]\}$, then $\mathcal{D}(V) \ne \{[\lambda_V]\}$. Thus $\mathcal{D}(V_i) = \{[\lambda_{V_i}]\}$ for each $i$. Hence $\{[\lambda_V]\} = \mathcal{D}(V) = \mathcal{D}(M; T; ([\lambda_{V_1}], \ldots, [\lambda_{V_r}])) = \{[\lambda_V]\}$. Further, Theorem \ref{detected slopes seifert} shows that $v(D_*) \leq 1$. The case that $v(D_*) = 1$ is impossible as it implies that $[h] \in \mathcal{D}(M; T; D_*) = \{[\lambda_V]\}$ (cf. Lemma 6.5 of \cite{BC}). Thus $v(D_*) = 0$. The argument of the proof of \cite[Lemma 10.4]{BC} then shows that $M$ fibres over the circle with fibre which detects $([\lambda_V], [\lambda_{V_1}], \ldots, [\lambda_{V_r}])$. Conversely, if $v(D_*) = 0$, $\mathcal{D}(V_i) = \{[\lambda_{V_i}]\}$ for each $i$, and $\mathcal{D}(M; T; ([\lambda_{V_1}], \ldots, [\lambda_{V_r}])) = \{[\lambda_V]\}$. Then $\mathcal{D}(V) = \mathcal{D}(M; T; D_*) = \mathcal{D}(M; T; ([\lambda_{V_1}], \ldots, [\lambda_{V_r}])) = \{[\lambda_V]\}$. Thus assertion (2) holds. 

Finally suppose that $M$ has base orbifold $P(a_1, \ldots , a_n)$ and $[\lambda_V]$ is vertical in $M$. In this case, Lemma 6.5 of \cite{BC} implies that $v(D_*) \geq 1$. If $\mathcal{D}(V) = \{[\lambda_V]\}$, then Theorem \ref{theorem: gluing}(3) shows that $v(D_*) = 1$ and combines with the method of proof of Case 1 of the proof of \cite[Lemma 10.3]{BC} to show that if  and if $i$ is the index for which $[h] \in \mathcal{D}(V_i)$, then $\mathcal{D}(V_i) = \{[h]\}$. Conversely, suppose that $v(D_*) = 1$ and if  and only if $i$ is the index for which $[h] \in \mathcal{D}(V_i)$, then $\mathcal{D}(V_i) = \{[h]\}$. Lemma 6.5 of \cite{BC} implies that $\{[\lambda_V]\} \subseteq \mathcal{D}(V) = \mathcal{D}(M; T; D_*) = \{[h]\}$. Thus $\{[\lambda_V]\} = \mathcal{D}(V) = \{[h]\}$. Thus assertion (3) holds.  
\end{proof}

When $\mathcal{D}(V) = \{[\lambda_V]\}$ and $V$ admits a horizontal co-oriented foliation, the topology if $V$ is strongly restricted.

\begin{proposition} \label{a point horizontal} 
Let $V$ be a graph manifold rational homology solid torus which admits a horizontal co-oriented foliation and suppose that $\mathcal{D}(V) = \{[\lambda_V]\}$. 

$(1)$ If $U$ is a union of pieces of $V$ which is connected and has connected boundary, then $\mathcal{D}(U) = \{[\lambda_U]\}$. Moreover, $[\lambda_U]$ is horizontal in the piece of $U$ incident to $\partial U$. 

$(2)$ $V$ fibres horizontally over the circle with fibres which intersect the JSJ tori of $V$ in foliations by circles. 

\end{proposition}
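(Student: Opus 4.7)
The plan is to prove both parts by simultaneous induction on the number of JSJ pieces of $V$. Write $V = M \cup V_1 \cup \cdots \cup V_r$, where $M$ is the piece containing $\partial V$ and each $V_i$ is the graph manifold rational homology solid torus in $V$ bounded by the JSJ torus $T_i = \partial V_i$ of $M$. An elementary analysis of the JSJ tree of $V$ shows that any $U \subseteq V$ satisfying the hypotheses of (1) is either $V$ itself or $V_T$ for some JSJ torus $T$ of $V$, where $V_T$ denotes the component cut off by $T$ on the side opposite to $\partial V$. Since $V$ admits a horizontal foliation, $[\lambda_V]$ must be transverse to the Seifert fibres of $M$, so $[\lambda_V]$ is horizontal in $M$; hence Lemma \ref{a point outer piece}(2) applies and yields $\mathcal{D}(V_i) = \{[\lambda_{V_i}]\}$ for each $i$ together with a fibration $p_M : M \to S^1$ whose fibre $F_0$ detects $([\lambda_V], [\lambda_{V_1}], \ldots, [\lambda_{V_r}])$. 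The base case $V = M$ follows immediately.

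For the inductive step I first verify that each $V_i$ satisfies the hypotheses of the proposition. By Theorem \ref{theorem: gluing}, the horizontal foliation on $V$ detecting $[\lambda_V]$ corresponds to a horizontal gluing coherent element on $\mathcal{T}(V)$ extending $[\lambda_V]$; restricting its components to $\mathcal{T}(V_i)$ (with value $[\lambda_{V_i}]$ on $T_i$ by the preceding paragraph) yields a horizontal gluing coherent extension of $[\lambda_{V_i}]$, so Theorem \ref{theorem: gluing} returns a horizontal foliation on $V_i$ detecting $[\lambda_{V_i}]$. Combined with $\mathcal{D}(V_i) = \{[\lambda_{V_i}]\}$, the inductive hypothesis therefore applies to each $V_i$. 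For (1), if $U = V_T$ with $T = T_i$ we are done by the argument just given; otherwise $T$ is a JSJ torus of some $V_i$, and the inductive hypothesis applied to $V_i$ supplies the conclusion. In either case, horizontality of $[\lambda_U]$ in the outer piece of $U$ follows by applying the opening observation to $U$, once $U$ has been shown to satisfy the proposition's hypotheses.

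For (2), the inductive hypothesis supplies horizontal fibrations $p_i : V_i \to S^1$ whose fibres $F_i$ meet each JSJ torus of $V_i$ in circles; in particular $F_i$ meets $T_i$ in $d_i$ circles of slope $[\lambda_{V_i}]$, while $F_0$ meets $T_i$ in $c_i$ circles of the same slope. To combine these fibrations into one on $V$, replace $p_M$ by its composition with $z \mapsto z^{k_M}$ and each $p_i$ by its composition with $z \mapsto z^{k_i}$, where $k_M$ and the $k_i$ are positive integers chosen to satisfy $k_M c_i = k_i d_i$ for every $i$; for instance, setting $m_i = \mathrm{lcm}(c_i, d_i)$, take $k_M = \mathrm{lcm}_i(m_i / c_i)$ and $k_i = k_M c_i / d_i$. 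After this rescaling, the induced boundary maps $T_i \to S^1$ from the two sides coincide, so the fibrations assemble into a horizontal fibration $V \to S^1$ whose fibres meet each JSJ torus in circles.

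The principal technical point is the boundary matching in (2): the multiplicities $c_i$ and $d_i$ need not coincide a priori, but pulling a fibration back along a self-cover $z \mapsto z^n$ of $S^1$ multiplies the boundary circle counts by $n$, allowing simultaneous alignment of multiplicities at every $T_i$. Everything else reduces to repeated invocations of Lemma \ref{a point outer piece}(2) and Theorem \ref{theorem: gluing}.
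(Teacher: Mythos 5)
Your argument is correct and follows essentially the same induction on the number of pieces as the paper: both deduce $\mathcal{D}(V_i)=\{[\lambda_{V_i}]\}$ for each inner piece $V_i$, pass to the inductive hypothesis, and glue the resulting fibrations with a fibration on the outer piece $M$. The only substantive difference is a local one in the final step of part (2): rather than invoking Lemma~\ref{a point outer piece}(2) to get the fibration on $M$ directly, the paper first uses a homology balance to produce the unique horizontal primitive class $\beta\in H_1(\partial V)$ compatible with the $\lambda_{V_i}$, builds a fibration on $M$ detecting $[\beta]$, glues, and then concludes $\beta=\pm\lambda_V$ from the hypothesis $\mathcal{D}(V)=\{[\lambda_V]\}$; your post-composition with cyclic covers of $S^1$ to match the boundary-circle multiplicities is a detail the paper leaves implicit (and note that it produces a disconnected fibre, which must then be made connected by factoring the resulting map $V\to S^1$ through an appropriate cyclic cover).
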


\begin{proof}
We proceed by induction on the number $p$ of pieces of $V$. 

When $p = 1$, $V$ is Seifert and the result follows from \cite[Lemma 2.2 and the method of proof of Proposition 6.13]{BC}. 

Suppose that $p> 1$ and recall that $M$ is the outer piece of $V$ with $\partial M = \partial V \cup T_1 \cup \ldots \cup T_r$ and $V = M \cup V_1 \cup \ldots \cup V_r$ where $V_1, \ldots , V_r$ are disjoint rational homology solid tori with $\partial V_i = T_i$. 

The method of proof of Lemma 10.3 of \cite{BC} shows that if some $\mathcal{D}(V_i) \ne \{[\lambda_{V_i}]\}$ then  $\mathcal{D}(V) \ne \{[\lambda_{V}]\}$. Thus $\mathcal{D}(V_i) = \{[\lambda_{V_i}]\}$ for each $i$ and so by induction assertion (1) holds for $V$. Induction also shows that each $V_i$ fibres horizontally over the circle with fibres which intersect $\partial V_i$ and the JSJ tori of $V_i$ in foliations by circles. Clearly, the fibration on $V_i$ detects $[\lambda_{V_i}]$. The restriction of $\mathcal{F}$ to $M$ detects a horizontal element $[\alpha_*] \in \mathcal{S}(M)$ and from above we can suppose that  
$$[\alpha_*] = ([\lambda_V], [\lambda_{V_1}], \ldots, [\lambda_{V_r}])$$
On the other hand, there is a unique horizontal, primitive class $\beta \in H_1(\partial V)$ which is a rational combination of $\lambda_{V_1}, \ldots, \lambda_{V_r}$. Hence there is a horizontal fibration on $M$ which detects the slopes $[\beta]$ on $T$ and $[\lambda_{V_i}]$ on $T_i$. We can glue the fibration on $M$ and those given inductively on the $V_i$ together to produce a fibration on $V$ which detects $[\beta]$. Our hypothesis that $\mathcal{D}(V) = \{[\lambda_V]\}$ then implies that $\beta = \pm \lambda_V$, which shows that assertion (2) holds. 
\end{proof}

Though Proposition \ref{a point horizontal} fails when $V$ admits no horizontal co-oriented foliation, the hypothesis that $\mathcal{D}(V) = \{[\lambda_V]\}$ still places strong conclusions on the topology $V$ in certain circumstances. 

\begin{proposition} \label{a point general} 
Let $V$ be a graph manifold rational homology solid torus and suppose that $\mathcal{D}(U) = \{[\lambda_U]\}$ for each graph manifold rational homology solid torus $U \subseteq V$ which is a union of pieces of $V$. Then there are smooth admissible foliations on the pieces of $V$ which determine a gluing coherent family of slopes for $V$ and which restrict to hyperbolic foliations on each boundary component of each piece of $V$.
\end{proposition}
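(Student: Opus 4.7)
The proof is by induction on the number $p$ of JSJ pieces of $V$. When $p=1$, $V$ is itself a Seifert piece with $\mathcal{D}(V)=\{[\lambda_V]\}$, so $V$ carries a co-oriented taut foliation detecting $[\lambda_V]$. Proposition \ref{fixing the pieces} applied to such a foliation produces a smooth admissible foliation on $V$ that is $k$-interval hyperbolic, and in particular hyperbolic, on $\partial V$ provided its hypothesis $|\partial M|+|\partial F|\geq 3$ holds; the one remaining edge case $|\partial V|=|\partial F|=1$ (in which $V$ fibres with once-punctured fibre) is handled directly by the spinning constructions in the proofs of Propositions \ref{compact 2} and \ref{compact 3}.

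For the inductive step, write $V = M\cup V_1\cup\cdots\cup V_r$, where $M$ is the outer piece containing $\partial V$ and the $V_i$ are the components of $\overline{V\setminus M}$. Any subset $U\subseteq V_i$ that is a union of pieces and a graph manifold rational homology solid torus is also such a subset of $V$, so our standing hypothesis descends to each $V_i$. By induction, each $V_i$ carries smooth admissible foliations on its pieces which are gluing coherent and hyperbolic on every piece boundary; in particular $\mathcal{D}(V_i)=\{[\lambda_{V_i}]\}$, so the slope assigned to $T_i=\partial V_i$ is $[\lambda_{V_i}]$, which agrees with the fibre slope $[h]$ of $M$ on $T_i$ exactly when we are in case (3) of Lemma \ref{a point outer piece}.

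To produce a compatible foliation on $M$ we apply the trichotomy of Lemma \ref{a point outer piece}. In case (1), $M$ has $Q$-base orbifold and the prescribed slope family $([h],[\lambda_{V_1}],\ldots,[\lambda_{V_r}])$ has $v=1$, so Proposition \ref{compact 3} produces a smooth admissible foliation on $M$ realizing this family with $k$-interval hyperbolic behavior on every boundary component. In case (2), $M$ fibres over the circle with fibre detecting $([\lambda_V],[\lambda_{V_1}],\ldots,[\lambda_{V_r}])$; Proposition \ref{compact 2} then converts this fibration into a smooth admissible foliation with $k$-interval hyperbolic boundary behavior. In case (3), the slope family $([h],[\lambda_{V_1}],\ldots,[\lambda_{V_r}])$ has $v=2$, and we again invoke Proposition \ref{compact 3}. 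Combined with the foliations supplied by induction on the $V_i$, these data assemble to the required gluing coherent family on all of $V$.

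The main obstacle is ensuring that the hypothesis $|\partial M|+|\partial F|\geq 3$ of the compact-leaf propositions is actually available; it can fail in a small number of configurations (for instance $M$ a cable space in case (1), or a one-holed Seifert piece fibring with once-punctured fibre in cases (2) and (3)). In each such edge case the required hyperbolic boundary behavior on $\partial M$ is installed by hand, spinning around a carefully chosen family of properly embedded vertical annuli connecting two boundary tori of $M$, or (when $M$ has non-orientable base orbifold) around parallel copies of the reconstructing annulus $A_0$, exactly as in the proofs of Propositions \ref{compact 3} and \ref{elliptic to ih}. Since we only demand hyperbolic, not $k$-interval hyperbolic, behavior on $\partial M$, these ad hoc constructions suffice to complete the inductive step.
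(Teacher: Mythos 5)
Your proof has a genuine gap in the type of boundary behavior it produces. Proposition \ref{a point general} requires the foliations on the pieces of $V$ to restrict to \emph{hyperbolic} foliations on boundary tori, but throughout the inductive step you invoke Propositions \ref{compact 2} and \ref{compact 3}, which produce \emph{interval hyperbolic} foliations. In this paper these are distinct notions: a hyperbolic foliation on a torus is a suspension of a circle homeomorphism with isolated fixed points (in the sense that the block decomposition typology distinguishes types h and ih), whereas a $k$-interval hyperbolic foliation is a suspension of a homeomorphism whose fixed point set consists of $2k$ non-degenerate closed intervals. Interval hyperbolic does \emph{not} imply hyperbolic, contrary to the ``and in particular hyperbolic'' parenthetical in your base case. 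The correct tool here is Proposition \ref{compact} (not \ref{compact 2} or \ref{compact 3}), which is precisely the one that outputs hyperbolic boundary behavior; the interval-hyperbolic propositions are what the paper reserves for the companion result, Proposition \ref{point interval hyperbolic}, whose proof explicitly says to swap \ref{compact} for \ref{compact 2}.

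Two secondary issues. First, the edge cases you raise do not actually occur. In the base case $r = 0$ with $[\lambda_V]$ horizontal, the base orbifold is forced to be $D^2(a,a)$ with $a \geq 2$ and the fibre has $a \geq 2$ boundary components; with $[\lambda_V]$ vertical, there is a vertical annulus leaf with two boundary components. Either way $|\partial M| + |\partial F| \geq 3$ automatically. In the inductive step $r \geq 1$, $|\partial M| = r + 1 \geq 2$ and in both subcases the compact leaf satisfies $|\partial F| \geq 2$, so the hypothesis of Proposition \ref{compact} is always met and no ad hoc spinning is needed. (A cable space has orientable base orbifold, so it never appears in case (1) of Lemma \ref{a point outer piece}; and in case (2) the manifold $M$ has at least two boundary tori.) Second, organizing by the trichotomy of Lemma \ref{a point outer piece} works but is unnecessarily refined: the paper simply splits on $v([\alpha_*]) = 0$ versus $v([\alpha_*]) > 0$. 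In the former case $M$ fibres over the circle, in the latter there is a vertical annulus leaf (by Proposition 6.2 of [BC]), and in both cases Proposition \ref{compact} applies directly with its hypothesis satisfied as above.
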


\begin{proof}
Recall that $M$ is the outer piece of $V$ with $\partial M = \partial V \cup T_1 \cup \ldots \cup T_r$ and $V = M \cup V_1 \cup \ldots \cup V_r$ where $V_1, \ldots , V_r$ are disjoint rational homology solid tori with $\partial V_i = T_i$. By hypothesis, $\mathcal{D}(V_i) = \{[\lambda_{V_i}]\}$, so by induction, the proposition holds for each $V_i$. Thus to finish the proof it suffices to construct a smooth foliation on $M$ which detects $[\alpha_*] = ([\lambda_V], [\lambda_{V_1}], \ldots, [\lambda_{V_r}])$ and which restricts to a hyperbolic foliation on each component of $\partial M$. 

When $V$ is Seifert (i.e. $r = 0$) we consider two cases. 
\vspace{-.2cm} 
\begin{itemize}
\item {\bf $[\lambda_V]$ is horizontal}: In this case $V$ has base orbifold $D^2(a, a)$ for some $a \geq 2$ by \cite[Proposition 6.5 and Proposition A.4(4)]{BC}. Arguing as in \S 2.2.3 of \cite{BC} shows that $V$ fibres over the circle with fibre a surface with $a$ boundary components. Now apply Proposition \ref{compact} to complete the proof. 

\vspace{.2cm} \item {\bf $[\lambda_V]$ is vertical}: In this case $V$ has base orbifold $Q_0(a_1, \ldots, a_n)$ for some $n \geq 0$ by \cite[Proposition 6.5]{BC}. Any admissible foliation on $V$ which detects $[\lambda_V]$ contains a leaf which is a vertical annulus by \cite[Proposition 6.2]{BC}, so we can apply Proposition \ref{compact} to complete the proof. 
 
\end{itemize}
\vspace{-.2cm} Thus the lemma holds when $r = 0$.  

Assume that $r \geq 1$. Since $\mathcal{D}(V) = \{[\lambda_{V}]\}$ and $\mathcal{D}(V_i) = \{[\lambda_{V_i}]\}$ for each $i$, there is an admissible foliation $\mathcal{F}$ on $M$ which detects $[\alpha_*]$. If $v([\alpha_*]) > 0$, there is a leaf of $\mathcal{F}$ which is a vertical annulus (\cite[Proposition 6.2]{BC}). If $v([\alpha_*]) = 0$, Lemma \ref{a point outer piece} shows that $M$ fibres over the circle with fibre which detects $[\alpha_*]$. In either case we can apply Proposition \ref{compact} to complete the proof of the proposition. 
\end{proof}

\begin{proposition} \label{point interval hyperbolic}
Let $V$ be a graph manifold rational homology solid torus and suppose that $\mathcal{D}(U) = \{[\lambda_U]\}$ for each graph manifold rational homology solid torus $U \subseteq V$ which is a union of pieces of $V$. Then for each odd $k \geq 1$ there is a smooth foliation on $V$ made up of admissible foliations on the pieces of $V$ and which is $k$-interval hyperbolic on each boundary component of each piece of $V$. 
\end{proposition}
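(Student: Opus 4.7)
The plan is to proceed by induction on the number of Seifert pieces of $V$, upgrading the foliations produced by Proposition \ref{a point general} from merely hyperbolic to $k$-interval hyperbolic via Proposition \ref{fixing the pieces}. Let $M$ be the outer piece of $V$ with $\partial M = \partial V \cup T_1 \cup \ldots \cup T_r$ and $V_1, \ldots, V_r$ the inner graph manifold rational homology solid tori. By hypothesis $\mathcal{D}(V_i) = \{[\lambda_{V_i}]\}$ for each $i$, and the hypothesis is inherited by each $V_i$; thus by induction each $V_i$ already carries a foliation, built from admissible foliations on its pieces, that is $k$-interval hyperbolic on every boundary component of every piece of $V_i$. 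Proposition \ref{a point general} applied to $V$ yields a smooth admissible foliation $\mathcal{F}_M$ on $M$ detecting the rational, gluing coherent slope tuple $([\lambda_V], [\lambda_{V_1}], \ldots, [\lambda_{V_r}])$; all entries are rational, so we are in a position to apply Proposition \ref{fixing the pieces} to $M$.

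Next I would verify the hypothesis of Proposition \ref{fixing the pieces} that if $\mathcal{F}_M$ has a compact horizontal leaf $F$ then $|\partial M| + |\partial F| \geq 3$. Using Lemma \ref{a point outer piece}, if $M$ has base orbifold $Q(a_1,\ldots,a_n)$, or if $M$ has base $P(a_1,\ldots,a_n)$ with $[\lambda_V]$ vertical, then $\mathcal{F}_M$ can be chosen to have a vertical annulus leaf (by \cite[Proposition 6.2]{BC}) and there is no horizontal compact leaf to worry about. In the remaining case $M$ has base $P(a_1,\ldots,a_n)$ and $[\lambda_V]$ is horizontal; Lemma \ref{a point outer piece}(2) then forces $M$ to fibre over $S^1$ with fibre $F$ meeting each boundary torus in at least one circle, so $|\partial F| \geq |\partial M| = r+1$. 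If $r \geq 1$ this yields $|\partial M| + |\partial F| \geq 2(r+1) \geq 4$. If $r = 0$ then $V = M$ is Seifert, and as in Proposition \ref{a point general} the base orbifold is $D^2(a,a)$ with $a \geq 2$ and $|\partial F| = a$, giving $|\partial M| + |\partial F| = 1 + a \geq 3$.

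With the hypothesis verified, Proposition \ref{fixing the pieces} produces a smooth foliation $\mathcal{F}_M'$ on $M$ detecting the same rational slope tuple but restricting to a $k$-interval hyperbolic foliation of slope $[\lambda_V]$ on $\partial V$ and of slope $[\lambda_{V_i}]$ on each $T_i$ (the addendum/propositions in \S \ref{sec: operations} make it clear that the $k \gg 0$ hypothesis can be replaced by any odd $k \geq 1$ in our setting, via Propositions \ref{compact 2} and \ref{compact 3}). The inductive foliation on each $V_i$ is, by construction, also $k$-interval hyperbolic of slope $[\lambda_{V_i}]$ on $T_i$, with the same odd $k$. Since two $k$-interval hyperbolic suspension foliations on a torus with the same slope and the same parity $k$ are diffeomorphic, one can glue $\mathcal{F}_M'$ to the $\mathcal{F}_{V_i}$ across each $T_i$ (applying the standard smoothing/thickening technique of \cite[Lemma 6.9]{BC}) to obtain a smooth foliation on $V$ with the desired properties.

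The main obstacle is the matching of the boundary foliations across the JSJ tori; this is precisely why Proposition \ref{fixing the pieces} was stated with interval-hyperbolic, rather than arbitrary hyperbolic, boundary behaviour, and why the parity of $k$ must be preserved from one side to the other. Once one grants that $\mathcal{D}(U) = \{[\lambda_U]\}$ propagates down through the JSJ decomposition to each subtorus (so that the slopes all match up to the rational longitudes on each side), the gluing is essentially mechanical.
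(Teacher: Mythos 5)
Your proof is correct and in substance follows the same route as the paper: the paper's own proof is the one-line remark that one should rerun the argument of Proposition~\ref{a point general} with Proposition~\ref{compact} replaced by Proposition~\ref{compact 2}. You arrive at the same thing, but through a slight detour: rather than applying Proposition~\ref{compact 2} directly inside each of the cases of the proof of Proposition~\ref{a point general}, you first invoke Proposition~\ref{a point general} to obtain a hyperbolic admissible foliation on the outer piece and then pass through Proposition~\ref{fixing the pieces} to upgrade it. This is logically fine (the output of Proposition~\ref{compact} has compact leaves, so the hypothesis of Proposition~\ref{fixing the pieces} can be checked), but since all the cases that actually occur here are compact-leaf cases, citing Proposition~\ref{compact 2} directly is cleaner and sidesteps the ``$k\gg0$'' qualifier in Proposition~\ref{fixing the pieces}, which you correctly flag as not genuinely restrictive here because Proposition~\ref{compact 2} holds for every odd $k\geq 1$. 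Your explicit verification that $|\partial M| + |\partial F| \geq 3$ in each case is a welcome bit of extra care that the paper leaves implicit; note, though, that in the $Q(a_1,\ldots,a_n)$ and vertical-$[\lambda_V]$ cases you do not actually need to argue away horizontal compact leaves, since a vertical annulus leaf already satisfies $|\partial F|=2$, giving $|\partial M|+|\partial F|\geq 3$ automatically, which is all Proposition~\ref{compact 2} requires.
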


\begin{proof}
The proof is similar to that of the previous proposition. Simply replace the use of Proposition \ref{compact} there by that of Proposition \ref{compact 2} here. 
\end{proof}

\begin{remark}
{\rm A similar result holds with $k$-interval hyperbolic replaced by $k$-hyperbolic.}
\end{remark}

\section{Block decompositions of foliated graph manifold rational homology $3$-spheres} \label{sec: block decompositions}

In this section we take $W$ to be a graph manifold rational homology $3$-sphere which admits a co-oriented taut foliation. 

\begin{definition}
{\rm An {\it admissible foliation} on a Seifert fibred manifold with non-empty boundary which is a piece a graph manifold rational homology $3$-sphere is either a smooth foliation with a compact leaf, or a smooth foliation of the form $\mathcal{F}(\rho)$ for some homomorphism $\rho : \pi_1(M) \rightarrow \widetilde{PSL}(2, \mathbb{R})_j$, or a smooth foliation constructed from one of these as in \S \ref{sec: operations}. }
\end{definition}

A {\it block decomposition $\mathcal{B}$} of $W$  is a splitting of $W$ along a subset of its JSJ tori  into connected submanifolds ({\it blocks}) such that
\vspace{-.2cm} 
\begin{itemize}

\item $W$ is the union of the blocks; 

\vspace{.2cm} \item the pieces of $W$ are endowed with admissible foliations which determine a gluing coherent family of rational slopes on the JSJ tori of $W$; 

\vspace{.2cm} \item the admissible foliations piece together to form a smooth foliation on each block which is either elliptic or hyperbolic on the block's boundary components and either elliptic or $k$-interval hyperbolic for some {\it odd} $k$ on the block's JSJ tori. 

\end{itemize}
It follows from \cite[Theorem 9.5 and Proposition A.8]{BC} that a graph manifold rational homology $3$-sphere admits a block decomposition if and only if it admits a co-oriented taut foliation. 

Given a block decomposition $\mathcal{B}$ of $W$ and a JSJ torus $T$ of $W$, we say that $T$ has {\it type e-e} (for elliptic-elliptic), {\it ih-ih, h-h}, or {\it e-h} depending on how the foliations on the two pieces of $W$ incident to $T$ restrict to $T$. We call a JSJ torus $T$ of $W$ {\it good} (with respect to $\mathcal{B}$) if it is a JSJ torus of a block of $\mathcal{B}$ or if it is on the boundary of a block and has type e-e or h-h. Otherwise we call $T$ {\it bad}.  Our goal in this section is to prove the following proposition. 

\begin{proposition} \label{no bad}
Let $W$ be a graph manifold rational homology $3$-sphere which admits a co-oriented taut foliation. Then there is a block decomposition of $W$ with no bad tori. 
\end{proposition}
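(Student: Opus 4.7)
The existence of some initial block decomposition $\mathcal{B}_0$ of $W$ is guaranteed by the result cited just before the statement (\cite[Theorem 9.5 and Proposition A.8]{BC}). My plan is to modify the admissible foliations on the pieces of $W$ so that every rational-slope JSJ torus becomes of type ih-ih with matching $k$-interval hyperbolic structure on both sides, and then to merge the adjacent blocks across such tori so that the torus becomes internal to a block (hence good). A useful preliminary observation is the one used in the proof of Theorem \ref{theorem: gluing 2}: detected irrational slopes are automatically strongly detected, so any JSJ torus carrying an irrational slope is of type e-e and is already good, regardless of how we group pieces into blocks.

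The first main step is uniformization of boundary behaviour. Fix a single odd integer $k \gg 0$ large enough to serve every piece of $W$ simultaneously. Apply Proposition \ref{fixing the pieces} to each piece, producing a new admissible foliation on that piece which detects the same slopes as before (so gluing coherence on the JSJ tori is preserved) and is $k$-interval hyperbolic on every rational-slope boundary component. For those pieces where the hypothesis $|\partial M|+|\partial F|\ge 3$ of Proposition \ref{fixing the pieces} fails, one falls into the degenerate regime studied in Section \ref{sec: degenerate}, and I would invoke Proposition \ref{point interval hyperbolic} there to obtain the required $k$-interval hyperbolic behaviour on the boundary. After this step, every rational-slope JSJ torus $T$ of $W$ carries the same $k$-interval hyperbolic slope foliation on both sides, so is of type ih-ih.

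The second main step is alignment and merging. Two $k$-interval hyperbolic foliations on a torus with the same slope and the same $k$ are diffeomorphic as foliated tori, so for each rational-slope JSJ torus $T$ there is an isotopy of $T$ carrying one characteristic foliation to the other. I would extend such an isotopy to a collar neighbourhood of $T$ in one of the two incident pieces, thereby modifying the admissible foliation on that piece only near $T$; because the operations of Section \ref{sec: operations} that build admissibility are all local, the resulting foliation on the piece remains admissible. The two pieces' admissible foliations now piece together smoothly across $T$, so we may refuse to split $\mathcal{B}_0$ at $T$. Performing this adjustment at every rational-slope JSJ torus and declaring all of them to be internal yields a block decomposition whose splitting tori are only the irrational-slope JSJ tori, each of type e-e. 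No bad tori remain.

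The main obstacle is the global coherence of the modifications. Applying Proposition \ref{fixing the pieces} to one piece alters its foliation on every rational-slope boundary component at once, so the construction must be done piece-by-piece with a common $k$ and in a way that respects the gluing coherence we started with; the degenerate pieces of Section \ref{sec: degenerate} need extra care because Proposition \ref{fixing the pieces} does not apply directly. Once uniformization is in hand, the collar-level alignment is purely local and does not disturb the foliation on the rest of any piece, so the merging step is straightforward. I expect the bookkeeping for the degenerate pieces (those realising $\mathcal{D}(V)=\{[\lambda_V]\}$ for some sub-union of pieces) to be the most delicate part of the argument.
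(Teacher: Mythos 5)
Your approach is not the paper's (which proceeds by contradiction, choosing a block decomposition of minimal complexity with respect to a lexicographic order on ``number of pieces, number of bad tori, number of e-e tori'' and then deriving a contradiction), and I do not think it works as stated: there is a genuine gap in the uniformization step.

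You claim that applying Proposition~\ref{fixing the pieces} piece-by-piece, with Proposition~\ref{point interval hyperbolic} as a fallback, makes every rational-slope JSJ torus ih-ih. But the two propositions do not jointly cover all cases. Proposition~\ref{fixing the pieces} requires $|\partial M| + |\partial F| \geq 3$ whenever the admissible foliation on the piece $M$ has a compact horizontal leaf $F$; when that fails (say $|\partial M| = |\partial F| = 1$, so the foliation is a fibration with a once-punctured fibre and annular boundary foliation), the foliation is forced to be elliptic on $\partial M$ and there is no room to add holonomy. Your proposed fallback, Proposition~\ref{point interval hyperbolic}, has the hypothesis that $\mathcal{D}(U) = \{[\lambda_U]\}$ for \emph{every} connected sub-union of pieces $U$ of the relevant rational homology solid torus $V$. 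That is a global condition on an entire side of a JSJ torus, not a local one on a single piece, and it does not follow from the local failure of the $|\partial M| + |\partial F| \geq 3$ hypothesis. The paper establishes it only via Lemma~\ref{point to point} under the further assumption $\mathcal{D}(V) = \{[\lambda_V]\}$, which in turn has to be derived from minimality of a putative counterexample together with Lemma~\ref{conditions on JSJ tori} (the interior of $\mathcal{D}(U) \cap \mathcal{D}(V)$ is empty, and if neither set is a point the torus is h-h). None of that reasoning appears in your outline, and without it you cannot conclude that the problematic pieces sit inside a degenerate $V$. Indeed the paper must also show in the final paragraph of its proof that the case $|\partial M| + |\partial F| = 2$ simply does not occur in a minimal counterexample (it would force $[\lambda_U] = [\lambda_V]$, contradicting $W$ being a rational homology sphere) -- a case your argument silently assumes away.

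There is a second, related worry: even where you can add holonomy, you are only modifying foliations that are elliptic (via Proposition~\ref{elliptic to ih}) or increasing the interval-hyperbolic count (via the Addendum). You have no operation that turns a hyperbolic boundary foliation into an elliptic one or lowers the count, so if one side of a torus is forced to be elliptic (fibration with too few boundary arcs) and the other side comes from a non-abelian $\rho$ and is genuinely hyperbolic, you cannot make both sides match. This is exactly the kind of bad e-h torus the paper's complexity argument is designed to eliminate by \emph{modifying the choice of degenerate side}, not by uniformizing. A repair of your argument would essentially have to reintroduce the paper's dichotomy from Lemma~\ref{conditions on JSJ tori} and the degenerate-case analysis of Section~\ref{sec: degenerate}, at which point you would be reconstructing the paper's proof.
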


\begin{remark}
{\rm We can piece together a block decomposition of $W$ without bad tori along any JSJ torus of type e-e to produce a new block decomposition whose foliations restrict to hyperbolic foliations of matching slopes on the boundary tori of the blocks. On the other hand, there are graph manifold rational homology $3$-spheres which admit co-oriented taut foliations but admit no family of  transversally projective co-oriented taut foliations on its pieces which restrict to hyperbolic foliations of matching slopes on its JSJ tori. }
\end{remark}

To each block decomposition $\mathcal{B}$ of $W$ we assign a complexity 
$$c(W, \mathcal{B}) = (\# \hbox{ pieces of } W,   \# \hbox{ bad tori of $\mathcal{B}$}, \# \hbox{ e-e tori of $\mathcal{B}$}) \in \mathbb N \oplus \mathbb W  \oplus \mathbb W$$ 
where $\mathbb N$ denotes the natural numbers $\{1, 2, 3, \ldots\}$ and $\mathbb W$ denotes the whole numbers $\{0\} \cup \mathbb N$.

Order $\mathbb N \oplus \mathbb W  \oplus \mathbb W$ lexicographically from right to left (so that the rightmost factor in the sum is smallest).  We prove the Proposition \ref{no bad} by contradiction, so assume that it is false. Suppose that $W$ and its block decomposition $\mathcal{B}$ are chosen among all block decompositions of counterexamples to Proposition \ref{no bad} to minimize $c(W, \mathcal{B})$. No boundary torus $T$ of a block of $\mathcal{B}$ has type e-e as otherwise we could produce a new block decomposition $\mathcal{B}'$ of $W$ with $c(W, \mathcal{B}') < c(W, \mathcal{B})$ by gluing together the two blocks incident to $T$ and their foliations. Thus  {\it the boundary tori of the blocks of $\mathcal{B}$ are either good and type h-h or bad and type e-h.}

\begin{lemma} \label{conditions on JSJ tori}
Let $T$ be a JSJ torus of $W$ and let $U$ and $V$ be the rational homology solid tori obtained by splitting $W$ open along $T$.  

$(1)$ The interior of $\mathcal{D}(U) \cap \mathcal{D}(V)$ is empty.  

$(2)$ If neither $\mathcal{D}(U)$ nor $\mathcal{D}(V)$ is a point, then $T$ is of type h-h.     

\end{lemma}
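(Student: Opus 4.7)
The plan is to prove part (1) first, then deduce part (2) from it.

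For part (1), I would argue by contradiction: assume $\hbox{int}(\mathcal{D}(U) \cap \mathcal{D}(V)) \neq \emptyset$. Since the frontiers of $\mathcal{D}(U)$ and $\mathcal{D}(V)$ consist of only finitely many rational slopes (by Theorem \ref{refined detection in graph manifolds qhst} and Corollary \ref{difference}), the interior of their intersection contains an irrational slope $[\alpha]$. Irrational slopes in $\mathcal{D}$ are automatically strongly detected (as noted in the proof of Corollary \ref{cor: strong detection}), so $[\alpha]$ belongs to $\mathcal{D}_{str}(U) \cap \mathcal{D}_{str}(V)$. Applying Theorem \ref{theorem: gluing 2} yields co-oriented taut foliations $\mathcal{F}_U$ on $U$ and $\mathcal{F}_V$ on $V$ each strongly detecting $[\alpha]$; their restrictions to $T$ are linear foliations of the same irrational slope, so by \cite[Lemma 6.9]{BC} they glue into a co-oriented taut foliation $\mathcal{F}$ on $W$ which is linear with irrational slope $[\alpha]$ on $T$.

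The contradiction would then come from the rational homology constraint on $W$. Since $W$ is a rational homology sphere, a Mayer--Vietoris calculation forces $[\lambda_U]$ and $[\lambda_V]$ to be linearly independent elements of $H_1(T;\mathbb{Q})$, hence distinct rational slopes on $T$. The existence of $\mathcal{F}$ with linear irrational behaviour on $T$, together with the structural description of $\mathcal{D}(U)$ and $\mathcal{D}(V)$ from Theorem \ref{refined detection in graph manifolds qhst} which anchors each set to its corresponding rational longitude, should yield the contradiction. Deriving it precisely is the main obstacle; I expect it to require a careful case analysis on the Seifert structures of the pieces of $U$ and $V$ adjacent to $T$, specifically on whether the fibre slopes $[h]$ of each piece lie in $\mathcal{D}(U)$, respectively $\mathcal{D}(V)$, and on the position of the non-rational $[\alpha]$ relative to $[\lambda_U]$ and $[\lambda_V]$ in the circle $\mathcal{S}(T)$.

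For part (2), assume $\mathcal{D}(U)$ and $\mathcal{D}(V)$ are both non-degenerate. The foliation $\mathcal{F}$ on $W$ coming from a block decomposition restricts to $T$ at a common slope $[\alpha] \in \mathcal{D}(U) \cap \mathcal{D}(V)$. If $\mathcal{D}(U) = \mathcal{S}(T)$, then $\mathcal{D}(U) \cap \mathcal{D}(V) = \mathcal{D}(V)$ has non-empty interior, contradicting part (1); similarly for $V$. Hence both are proper non-degenerate closed arcs in $\mathcal{S}(T)$, and by part (1) they meet in at most two common frontier points. Thus $[\alpha]$ is rational and lies in $\hbox{fr}(\mathcal{D}(U)) \cap \hbox{fr}(\mathcal{D}(V))$. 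By Corollary \ref{difference}, rational frontier slopes of non-degenerate $\mathcal{D}$-sets lie in $\mathcal{D} \setminus \mathcal{D}_{str}$, so neither restriction of $\mathcal{F} \cap T$ is linear. Both must therefore be hyperbolic in the block decomposition sense, so $T$ has type h-h.
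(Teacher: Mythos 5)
Your proof of part (1) takes a fundamentally different route from the paper's and cannot be completed as outlined, because you have missed the key structural feature of the situation: this lemma is stated and proved inside a proof by contradiction, where $(W, \mathcal{B})$ has been chosen to minimize the complexity $c(W,\mathcal{B})$ among all block decompositions of counterexamples to Proposition \ref{no bad}. Part (1) is \emph{not} a general fact about graph manifold rational homology $3$-spheres admitting taut foliations; it is a constraint on a minimal counterexample, and the minimality is what does the work. The paper's argument is: if $\hbox{int}(\mathcal{D}(U)\cap\mathcal{D}(V))\neq\emptyset$, pick a \emph{rational} horizontal slope $[\alpha]\in\mathcal{D}_{str}(U)\cap\mathcal{D}_{str}(V)$ far from the fibre slopes and from both rational longitudes. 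Dehn filling $U$ and $V$ along $[\alpha]$ yields graph manifold rational homology $3$-spheres with taut foliations and with fewer pieces than $W$; by minimality each admits a block decomposition with no bad tori, the foliations can be arranged transverse to the filling cores, and reassembling gives a block decomposition of $W$ with no bad tori, contradicting the choice of $(W,\mathcal{B})$.

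Your plan instead glues foliations on $U$ and $V$ to produce a taut foliation on $W$ linear of irrational slope on $T$, and then hopes to extract a contradiction from ``the rational homology constraint.'' But such a foliation existing on $W$ contradicts nothing: $W$ admits taut foliations by hypothesis, and the structural facts you cite ($[\lambda_U]\neq[\lambda_V]$, each $\mathcal{D}$-set a closed connected arc containing its longitude) are entirely consistent with $\mathcal{D}(U)$ and $\mathcal{D}(V)$ overlapping on an arc of nonempty interior. There is no case analysis on Seifert data that will resolve this; the missing ingredient is the minimality of the counterexample, without which the statement is simply false in general. Your self-diagnosis that ``deriving it precisely is the main obstacle'' is accurate, and the obstacle is not technical but conceptual. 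Your argument for part (2), given part (1), is essentially the paper's.
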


\begin{proof}
Corollary \ref{cor: strong detection} implies that if $\hbox{int}( \mathcal{D}(U) \cap \mathcal{D}(V) ) \ne \emptyset$ we may choose a rational horizontal slope $[\alpha] \in \mathcal{S}(T)$ which
\begin{itemize} 
\vspace{-.2cm} \item is contained in $\mathcal{D}_{str}(U) \cap \mathcal{D}_{str}(V)$; 
\vspace{.2cm} \item has distance at least $2$ to the Seifert fibres of the outer pieces of $U$ and $V$; 
\vspace{.2cm} \item is not the rational longitude of either $U$ or $V$. 
\end{itemize}
\vspace{-.2cm} 
Then the result of Dehn filling $U$ and $V$ along $[\alpha]$ are graph manifold rational homology $3$-spheres admitting co-orientable taut foliations (\cite[Theorem 9.5]{BC}). Further, the pieces of the Dehn fillings $U([\alpha])$ and $V([\alpha])$ correspond in the obvious way to those of $W$. Since $U$ and $V$ have fewer pieces than $W$, the minimality of $c(W, \mathcal{B})$ implies that each admits a block decomposition with no bad tori. Further, the foliations can be chosen to be transverse to the cores of the filling solid tori in $U([\alpha])$ and $V([\alpha])$ (\cite[Lemma 6.12]{BC}). But then we can produce a block decomposition of $W$ with no bad tori, contrary to our assumptions. Thus assertion (1) holds.

Suppose that neither $\mathcal{D}(U)$ nor $\mathcal{D}(V)$ is a point. Then Corollary \ref{difference} implies that $\mathcal{D}(U)$ and $\mathcal{D}(V)$ are non-degenerate intervals and $\mathcal{D}(U) \cap \mathcal{D}(V) = \partial \mathcal{D}(U) \cap \partial \mathcal{D}(V) \subseteq (\mathcal{D}(U) \setminus \mathcal{D}_{str}(U)) \cap (\mathcal{D}(V) \setminus \mathcal{D}_{str}(V)) $. In either case, the foliations from the pieces of $U$ and $V$ incident to $T$ are hyperbolic on $T$, so $T$ has type h-h. 
\end{proof}

\begin{lemma} \label{point to point}
Let $T$ be a JSJ torus of $W$ and let $V$ be one of the rational homology solid tori in $W$ with boundary $T$. Suppose that $\mathcal{D}(V) = \{[\lambda_V]\}$. 
If $V_1$ is a union of pieces of $V$ which is connected and has connected boundary, then $\mathcal{D}(V_1) = \{[\lambda_{V_1}]\}$. 
\end{lemma}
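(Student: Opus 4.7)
My plan is to induct on the number of Seifert pieces of $V$. The only non-trivial case is $V_1 \subsetneq V$. Let $M$ be the outer piece of $V$ and $A_1, \ldots, A_r$ the graph manifold rational homology solid tori in $V$ bounded by the remaining boundary tori $T_1, \ldots, T_r$ of $M$. A connected union of pieces of $V$ with connected boundary cannot contain $M$ without equalling $V$ (otherwise its boundary would contain $\partial V$ together with at least one $T_j$, contradicting connectedness), so such a $V_1$ must sit inside some $A_j$. It therefore suffices to show $\mathcal{D}(A_j) = \{[\lambda_{A_j}]\}$: once this is proved, $A_j$ satisfies the hypotheses of the lemma ($T_j$ is a JSJ torus of $W$) and has strictly fewer pieces than $V$, so the inductive hypothesis applied to $A_j$ delivers $\mathcal{D}(V_1) = \{[\lambda_{V_1}]\}$.

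The key claim will be proved by case analysis based on Lemma \ref{a point outer piece}. When $M$ has base orbifold $P(a_1, \ldots, a_n)$ with $[\lambda_V]$ horizontal, Lemma \ref{a point outer piece}(2) gives $\mathcal{D}(A_j) = \{[\lambda_{A_j}]\}$ for every $j$ directly. In the remaining cases $[\lambda_V]$ equals the Seifert fibre slope $[h]$ of $M$ on $T$; moreover, when $M$ is $P$-type with $[\lambda_V]$ vertical, Lemma \ref{a point outer piece}(3) already settles the unique index $i_0$ with $[h] \in \mathcal{D}(A_{i_0})$. Thus the real work concerns indices $j$ with $[h] \notin \mathcal{D}(A_j)$.

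For such a $j$ I argue by contradiction. Suppose $\mathcal{D}(A_j)$ is not a single point; by Corollary \ref{cor: detection}(2) it is then a non-degenerate closed interval in $\mathcal{S}(T_j)$. Set $U := \overline{W \setminus A_j}$, a graph manifold rational homology solid torus bounded by $T_j$ whose outer piece is $M$, and whose sub-rational-homology-solid-tori at the remaining boundaries of $M$ are $V' := \overline{W \setminus V}$ (on the $T$-side) and the $A_k$ for $k \ne j$ (on the $T_k$-side). Corollary \ref{cor: detection} then yields
\[
\mathcal{D}(U) = \mathcal{D}\!\left(M;\, T_j;\, \emptyset;\, \mathcal{D}(V') \times \prod_{k \ne j} \mathcal{D}(A_k)\right).
\]
Since the fixed co-oriented taut foliation on $W$ detects $[\lambda_V] = [h]$ on $T$ from the $V'$ side as well, we have $[h] \in \mathcal{D}(V')$. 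Adding in $[h] \in \mathcal{D}(A_{i_0})$ in the $P$-vertical subcase (where $i_0 \neq j$), the vertical multiplicity of the product on the right is at least $1$ in the $Q$-type case and at least $2$ in the $P$-vertical case. Theorem \ref{detected slopes seifert} then forces $\mathcal{D}(U) = \mathcal{S}(T_j)$, so $\mathcal{D}(A_j) \cap \mathcal{D}(U) = \mathcal{D}(A_j)$ has non-empty interior, contradicting Lemma \ref{conditions on JSJ tori}(1). Hence $\mathcal{D}(A_j)$ is a single point, which by Corollary \ref{cor: detection}(3) must be $[\lambda_{A_j}]$. The main obstacle I expect is the bookkeeping of which sub-rational-homology-solid-torus fills which face of $M$ once $V$ is reassembled into $U$ inside $W$, so that the count of vertical factors is correct; once that is pinned down, the argument reduces to a single application of Theorem \ref{detected slopes seifert}.
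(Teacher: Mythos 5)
Your proposal is correct and follows the same route as the paper's proof: induct on the number of pieces, reduce to showing that each $\mathcal{D}(A_j)$ is a singleton, dispatch the $P$-horizontal case via Lemma \ref{a point outer piece}(2), and in the vertical cases derive a contradiction with Lemma \ref{conditions on JSJ tori}(1) by showing $\mathcal{D}(\overline{W \setminus A_j}) = \mathcal{S}(T_j)$ via the vertical-multiplicity clause of Theorem \ref{detected slopes seifert}. The only cosmetic difference is that you invoke Theorem \ref{detected slopes seifert} directly where the paper cites \cite[Proposition 6.5]{BC} (which that theorem subsumes), and you spell out the step $[h]=[\lambda_V]\in\mathcal{D}(\overline{W\setminus V})$ coming from the ambient taut foliation on $W$, which the paper leaves implicit.
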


\begin{proof}
We prove the lemma by induction on the number $p$ of pieces of $V$. When $p = 1$, $V$ is Seifert and the lemma holds by hypothesis. Suppose that $p > 1$ and let $M$ be the outer piece of $V$. Write $\partial M = \partial V \cup T_1 \cup \ldots \cup T_r$ and $V = M \cup V_1 \cup \ldots \cup V_r$ where $V_1, \ldots , V_r$ are disjoint rational homology solid tori with $\partial V_i = T_i$. By induction, it suffices to show that $\mathcal{D}(V_i) = \{[\lambda_{V_i}]\}$ for each $i$. Set $D_* = \mathcal{D}(V_1) \times \ldots \times \mathcal{D}(V_r)$. 

If $M$ has base orbifold $Q(a_1, \ldots , a_n)$, then $[\lambda_V] = [h]$. Set $U_1 = \overline{W \setminus V_1}$, so that $W = U_1 \cup_{T_1} V_1$, and suppose that $\mathcal{D}(V_1) \ne \{[\lambda_{V_1}]\}$. Then the interior of $\mathcal{D}(V_1)$ is non-empty by Theorem \ref{detection in graph manifolds qhst}. Proposition 6.5(1) of \cite{BC} implies that for each element $([\alpha_1], \ldots, [\alpha_r])$ of $\mathcal{D}_*$, $([h], [\alpha_1], \ldots, [\alpha_r])$ is foliation detected in $M$, and since $[\lambda_{V_j}] \in \mathcal{D}(V_j)$ for each $j$, for each $[\alpha_1] \in \mathcal{D}(V_1)$ there is a gluing coherent family of slopes on $U_i$ which extends $([h], [\alpha_1], [\lambda_{V_2}], \ldots, [\lambda_{V_r}])$. Theorem \ref{theorem: gluing} then shows that $\mathcal{D}(V_1) \cap \mathcal{D}(U_1) \supseteq \mathcal{D}(V_1)$ and hence has a non-empty interior, contrary to Lemma \ref{conditions on JSJ tori}(1). Thus $\mathcal{D}(V_1) = \{[\lambda_{V_1}]\}$ and similarly $\mathcal{D}(V_i) = \{[\lambda_{V_i}]\}$ for each $i$. 

If $M$ has base orbifold $P(a_1, \ldots , a_n)$ and $[\lambda_V]$ is vertical in $M$, then Lemma \ref{a point outer piece}(3) implies that $v(D_*) = 1$ and if $[h] \in \mathcal{D}(V_i)$, then $\mathcal{D}(V_i) = \{[h]\}$. Without loss of generality we suppose that $[h] \in \mathcal{D}(V_1)$. As in the previous case, Lemma \ref{conditions on JSJ tori}(1) combines with \cite[Proposition 6.5]{BC} to show that $\mathcal{D}(V_i) = \{[\lambda_{V_i}]\}$ for $2 \leq i \leq r$, which completes the proof in this case. Finally if $M$ has base orbifold $P(a_1, \ldots , a_n)$ and $[\lambda_V]$ is horizontal in $M$, then Lemma \ref{a point outer piece}(2) implies that $\mathcal{D}(V_i) = \{[\lambda_{V_i}]\}$ for each $i$. 
\end{proof}

Lemma \ref{point to point} combines with Propositions \ref{a point general} and \ref{point interval hyperbolic} to yield the following lemma. 

\begin{lemma} \label{point to hyp or int-hyp}
Let $T$ be a JSJ torus of $W$ and let $V$ be one of the rational homology solid tori in $W$ with boundary $T$. If $\mathcal{D}(V) = \{[\lambda_V]\}$ then 

$(1)$ there are smooth foliations on the pieces of $V$ which determine a gluing coherent family of slopes for $V$ and which restrict to hyperbolic foliations on each boundary component of each piece of $V$.

$(2)$ for each odd $k \geq 1$ there is a smooth foliation on $V$ which is $k$-interval hyperbolic on each boundary component of each piece of $V$. 
\qed
\end{lemma}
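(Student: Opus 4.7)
The plan is to verify the hypothesis common to Propositions \ref{a point general} and \ref{point interval hyperbolic}, namely that $\mathcal{D}(U) = \{[\lambda_U]\}$ for \emph{every} graph manifold rational homology solid torus $U \subseteq V$ obtained as a connected union of pieces of $V$ with connected boundary. Once this is in hand, conclusion (1) is a direct application of Proposition \ref{a point general} and conclusion (2) is a direct application of Proposition \ref{point interval hyperbolic}.

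To establish the hypothesis, I would invoke Lemma \ref{point to point} in the following way. By assumption, $V$ itself satisfies $\mathcal{D}(V) = \{[\lambda_V]\}$, and $V$ is one of the two rational homology solid tori obtained by splitting the ambient graph manifold rational homology $3$-sphere $W$ along the JSJ torus $T$. Thus the hypotheses of Lemma \ref{point to point} are met. Applied to $V$, that lemma asserts that any connected subunion $U$ of pieces of $V$ with connected boundary satisfies $\mathcal{D}(U) = \{[\lambda_U]\}$. This is precisely the standing hypothesis used in both Propositions \ref{a point general} and \ref{point interval hyperbolic}.

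With the hypothesis verified, Proposition \ref{a point general} supplies smooth admissible foliations on the pieces of $V$ which restrict to hyperbolic foliations on every boundary component of every piece and which determine a gluing coherent family of slopes on the JSJ tori of $V$. This gives assertion (1). For assertion (2), the same verified hypothesis allows a direct application of Proposition \ref{point interval hyperbolic}, yielding, for any odd $k \geq 1$, a smooth foliation on $V$ assembled from admissible foliations on the pieces and restricting to a $k$-interval hyperbolic foliation on each boundary component of each piece.

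There is essentially no independent obstacle here; the content of the lemma is that the rigidity hypothesis $\mathcal{D}(V) = \{[\lambda_V]\}$ propagates inward to every sub-union of pieces of $V$, which is exactly what Lemma \ref{point to point} provides, and then the two propositions of Section \ref{sec: degenerate} do the rest. The only point requiring any care is to ensure that each sub-union under consideration is indeed a graph manifold rational homology solid torus with connected boundary, but this is the hypothesis built into the statement of Lemma \ref{point to point}, and is the reason that Propositions \ref{a point general} and \ref{point interval hyperbolic} were formulated in that form.
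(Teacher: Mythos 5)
Your proposal is correct and matches the paper's intended argument exactly: the sentence immediately preceding the lemma in the paper states that it is obtained by combining Lemma \ref{point to point} with Propositions \ref{a point general} and \ref{point interval hyperbolic}, which is precisely the chain of reasoning you give. The only cosmetic remark is that in the hypotheses of Propositions \ref{a point general} and \ref{point interval hyperbolic}, the phrase ``graph manifold rational homology solid torus $U \subseteq V$ which is a union of pieces of $V$'' already forces $U$ to be connected with connected boundary (since $W$ is a rational homology sphere), so the hypothesis delivered by Lemma \ref{point to point} is exactly what is needed, as you note.
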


\begin{proof}[Proof of Proposition \ref{no bad}] By hypothesis, there is at least one bad torus $T$ on the boundary of some block of $\mathcal{B}$. Let $U$ and $V$ be the rational homology solid tori obtained by splitting $W$ open along $T$. We suppose that $T$ is elliptic to the $V$-side and hyperbolic to the $U$-side. Note that $\mathcal{D}(V) \ne \{[\lambda_V]\}$ since otherwise we could use Lemma \ref{point to hyp or int-hyp}(1) to reduce the complexity of $(W, \mathcal{B})$. Thus $\mathcal{D}(U) = \{[\lambda_U]\}$ by Lemma \ref{conditions on JSJ tori}(2). 

Let $M$ be the outer piece of $V$ and suppose that $T, T_1, \ldots, T_r$ are the boundary tori of $M$. Let $V_1, \ldots, V_r$ be the graph manifold rational homology solid tori in $V$ such that $T_i = \partial V_i$.  

If $T_i$ is hyperbolic to the $M$-side for some $i$ we could use Lemma \ref{point to hyp or int-hyp} to find a smooth co-oriented, taut foliation $\mathcal{F}_U$ on $U$ which is $k$-interval hyperbolic on $T$ for some odd $k$. Then we could use Proposition \ref{elliptic to ih} to find a smooth foliation on $M$ which agrees with $\mathcal{F}_U$ on $T$ and coincides with the original foliation on $M$ near $T_1 \cup \ldots \cup T_r$. But then we could reduce the complexity of our block decomposition, which is impossible. Thus each $T_i$ is either elliptic or interval hyperbolic to the $M$-side. 

Set $U_i = \overline{W \setminus V_i}$ and suppose that $\mathcal{D}(U_i) = \{[\lambda_{U_i}]\}$ for some $i$.  By the previous paragraph $T_i$ has type e-h, e-e or ih-ih, with $T_i$ elliptic to the $M$ side if it is e-h.  In each case we can find a new block decomposition of $W$ with smaller complexity:

\vspace{-.2cm} 
\begin{itemize}

\item If $T_i$ has type e-h we do this by applying Lemma \ref{point to hyp or int-hyp}(1) to $U_i$ to obtain a new block decomposition of $W$ which agrees with $\mathcal{B}$ on $V_i$ and  for which there are no bad tori contained in $U_i$;

\vspace{.2cm} \item If $T_i$ has type ih-ih we proceed as in the previous case, but apply Lemma \ref{point to hyp or int-hyp}(2) to $U_i$ to obtain a new block decomposition of $W$ which agrees with $\mathcal{B}$ on $V_i$ and for which there are no bad tori contained in $U_i$;

\vspace{.2cm} \item If $T_i$ has type e-e we can apply Lemma \ref{point to hyp or int-hyp}(1) to $U_i$ to find a new block decomposition of $W$ which agrees with $\mathcal{B}$ on $V_i$ and for which the only bad torus which is contained in $U_i$ is $T_i$. This new block decomposition has no more bad tori than $\mathcal{B}$, but has fewer e-e tori.  

\end{itemize}
\vspace{-.3cm} 
This contradicts our choice of $(W, \mathcal{B})$, so $\mathcal{D}(U_i) \ne \{[\lambda_{U_i}]\}$ for each $i$. Since no $T_i$ is hyperbolic to the $M$-side, Lemma \ref{conditions on JSJ tori}(2) now implies that $\mathcal{D}(V_i) = \{[\lambda_{V_i}]\}$ for each $i$. 

If there is an admissible foliation on $M$ which detects $([\lambda_U], [\lambda_{V_1}], \ldots, [\lambda_{V_r}])$ but has no compact leaves, then $([\lambda_U], [\lambda_{V_1}], \ldots, [\lambda_{V_r}])$ is horizontal in $M$ and Proposition \ref{elliptic to ih} implies that there is a smooth foliation on $M$ which is either hyperbolic or $1$-interval hyperbolic on each component of $\partial M$. Applying Lemma \ref{point to hyp or int-hyp} appropriately to each of the $V_i$ shows that we can construct a block decomposition of $W$ without bad tori, contrary to our choices. 

If $([\lambda_U], [\lambda_{V_1}], \ldots, [\lambda_{V_r}])$ is detected by an admissible foliation in $M$ with a compact leaf $F$ such that $|\partial M| + |\partial F| \geq 3$, then Proposition \ref{compact}(1) and Lemma \ref{point to hyp or int-hyp}(1) imply that there is a block decomposition of $W$ without bad tori, again a contradiction. 

Finally if $([\lambda_U], [\lambda_{V_1}], \ldots, [\lambda_{V_r}])$ is detected by an admissible foliation in $M$ with a compact leaf $|F|$ such that $|\partial M| + |\partial F| = 2$, then $|\partial M| = |\partial F| = 1$. But this is impossible since it would imply that $V = M$ and $[\lambda_U]$ has the same slope as $[\partial F]$, which is $[\lambda_V]$, contrary to the fact that $W$ is a rational homology $3$-sphere. This final contradiction completes the proof of Proposition \ref{no bad}. 
\end{proof}

A positive answer to the following question would imply that a graph manifold rational homology $3$-sphere which admits a co-oriented taut foliation admits a smooth co-oriented taut foliation. 

\begin{question} \label{q: matching hyperbolic behaviour}
Let $W$ be a graph manifold rational homology $3$-sphere which admits a co-oriented taut foliation. Is there is a block decomposition of $W$ with no bad tori such that the number of closed leaves on a boundary component $T$ of a block is the same for both foliations on the two blocks incident to $T$? 
\end{question}

\section{Graph manifolds with left-orderable fundamental groups are not L-spaces} \label{sec: lo --> nls}

In this section we prove Theorem \ref{LO implies NLS}: {\it If $W$ is a graph manifold rational homology $3$-sphere with a left-orderable fundamental group, then it is not an L-space}. We do this by constructing a closed $2$-form $\omega$ and contact structures $\xi^+, \xi^-$ of opposite sign on $W$ such that $\omega | \xi^{\pm} > 0$. To see why this is sufficient, fix a smooth $1$-form $\alpha$ on $W$ which evaluates positively on a vector field tranverse to both $\xi^+$ and $\xi^-$ and let $p$ and $q$ be the projections of $V = W \times [-1, 1]$ to $W$ and $[-1, 1]$. For $\epsilon> 0$ consider the closed $2$-form 
$$\omega_{V} = p^*(\omega) + \epsilon d(q p^*(\alpha))$$
on $V$. For $\epsilon$  small, $\omega_V \wedge \omega_V > 0$ and so $\omega_V$ is a symplectic form on $V$ which is positive on $\xi^+ \times \{1\}$ and $\xi^- \times \{-1\}$. Thus $(W, \xi^+)$ is semi-fillable and therefore $W$ is not an L-space \cite[Theorem 1.4]{OSz2004-genus}. 

\begin{proof}[Proof of Theorem \ref{LO implies NLS}] 
From above we are reduced to constructing a closed $2$-form $\omega$ and contact structures $\xi^+, \xi^-$ of opposite sign on $W$ such that $\omega | \xi^{\pm} > 0$.

Since $W$ has a left-orderable fundamental group, it admits a co-oriented taut foliation (\cite[Theorem 1.1]{BC}) and so by Proposition \ref{no bad}, $W$ admits a block decomposition $\mathcal{B}$ with no bad tori. Thus we can split $W$ along a subset of its JSJ tori  into blocks $B_1, \ldots, B_n$ such that 
\vspace{-.2cm} 
\begin{itemize}

\item the pieces of $W$ are endowed with admissible foliations which determine a gluing coherent family of rational slopes on the JSJ tori of $W$; 

\vspace{.2cm} \item the admissible foliations piece together to form a smooth foliation $\mathcal{F}_i$ on each block $B_i$ which is hyperbolic on each component of $\partial B_i$.

\end{itemize}
We can assume that there is more than one block in $\mathcal{B}$ by \cite[Theorem 1.4]{OSz2004-genus}. 

Orient $W$ and co-orient $\mathcal{F}_1, \ldots, \mathcal{F}_n$ coherently. (The latter means that if $T = B_i \cap B_j$ is a JSJ torus of $W$, then the transverse orientation of a closed leaf of $\mathcal{F}_i|T$ and that of a closed leaf of $\mathcal{F}_j|T$ match, up to isotopy of one leaf to the other.) This determines an orientation of each $B_i$ and a coherent orientation of the leaves of each $\mathcal{F}_i$.

\begin{lemma} \label{omega_i} 
There are closed $2$-forms $\omega_1, \ldots, \omega_n$ on $W$ such that 

$(1)$ $\omega_i|\mathcal{F}_i > 0$. 

$(2)$ $\omega_i$ is zero outside of an arbitrarily small neighborhood of $B_i$ in $W$. 
\end{lemma}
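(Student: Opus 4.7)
The plan is to construct each $\omega_i$ as a tapered version of a closed 2-form positive on an extension of $\mathcal{F}_i$ past $\partial B_i$. First I would choose a small open neighborhood $N_i = B_i \cup (\partial B_i \times [0, \delta))$ of $B_i$ in $W$ and extend $\mathcal{F}_i$ across $\partial B_i$ to a smooth co-oriented taut foliation $\widetilde{\mathcal{F}}_i$ on $N_i$, transverse to $\partial N_i$, using the product structure of $\mathcal{F}_i$ available in a collar of $\partial B_i$ since $\mathcal{F}_i$ is hyperbolic there (so its germ at $\partial B_i$ is, after isotopy, the product $(\mathcal{F}_i|\partial B_i) \times (-\epsilon, \epsilon)$). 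A Sullivan-type theorem for boundary-transverse taut foliations on compact manifolds with boundary then provides a smooth closed 2-form $\eta_i$ on $\overline{N_i}$ satisfying $\eta_i|\widetilde{\mathcal{F}}_i > 0$.

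The second step is to adjust $\eta_i$ so that its period over each torus $T \subset \partial N_i$ vanishes. In product coordinates $(y,z,t)$ on a collar $T \times [0, \delta']$ of a component of $\partial N_i$ (with $T$ parametrized by $(y,z)$ and $t$ the collar coordinate), the leaf-tangent 2-planes of $\widetilde{\mathcal{F}}_i$ are spanned by $\partial_t$ and a vector field $a \partial_y + b \partial_z$, so the positivity condition $\eta_i|\widetilde{\mathcal{F}}_i > 0$ constrains only the coefficients of $dt \wedge dy$ and $dt \wedge dz$ in $\eta_i$, while the period $\int_T \eta_i$ is controlled solely by the coefficient of $dy \wedge dz$. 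Within the open cone of closed 2-forms positive on $\widetilde{\mathcal{F}}_i$, one may therefore add a sufficiently small closed 2-form on $N_i$ whose restriction to each component of $\partial N_i$ has the prescribed period, thereby zeroing out all the periods while preserving positivity. Consequently, $\eta_i$ restricts to an exact form on a collar $\mathcal{C}$ of $\partial N_i$ inside $N_i$, say $\eta_i = d\beta_i$ on $\mathcal{C}$.

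With such $\eta_i$ in hand, I would choose a smooth cutoff $\phi: \overline{N_i} \to [0,1]$ that equals $1$ in a neighborhood of $\partial N_i$ and is supported in $\mathcal{C}$, and set $\omega_i := \eta_i - d(\phi\, \beta_i)$ on $\overline{N_i}$, extended by zero to all of $W$. Since $\omega_i$ vanishes identically where $\phi = 1$, it extends smoothly by zero across $\partial N_i$ and is supported in $N_i$. Outside the collar $\mathcal{C}$ the form $\omega_i$ coincides with $\eta_i$, so in particular $\omega_i|\mathcal{F}_i > 0$ holds on $B_i$; on the thin part of $B_i$ inside $\mathcal{C}$, the correction term $d(\phi\beta_i)$ is controlled in size by the smallness of $\delta$ and can be kept within the positivity margin of $\eta_i$. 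Taking $\delta$ arbitrarily small makes the support of $\omega_i$ arbitrarily close to $B_i$.

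The main obstacle is the cohomological adjustment in the second step: forcing $[\eta_i]$ into the kernel of the restriction $H^2(N_i; \mathbb{R}) \to H^2(\partial N_i; \mathbb{R})$ while keeping it inside the open Sullivan cone. It is precisely here that the product structure of $\widetilde{\mathcal{F}}_i$ near $\partial N_i$, inherited from the hyperbolic traces of $\mathcal{F}_i$ on $\partial B_i$, is essential, as it decouples the positivity constraint (controlled by the $dt$-components of $\eta_i$) from the period constraint (controlled by the $T$-parallel component) in the collar, allowing the required perturbation to be carried out without disturbing leafwise positivity elsewhere on $B_i$.
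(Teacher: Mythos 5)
Your overall plan — extend $\mathcal{F}_i$ a little past $\partial B_i$, produce a Sullivan-type closed $2$-form positive on the extended foliation, arrange its periods on the boundary tori to vanish, and then cut it off — has the right shape and matches the spirit of the construction in \cite{Ca} that the proof cites. The observation that near $\partial B_i$ positivity constrains only the $dt$-components of the form while the period over a boundary torus is governed by the tangential $dy\wedge dz$-component is also correct, and it is indeed why a cutoff is feasible once the periods vanish.

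The gap is in the middle step. You assert that the periods of $\eta_i$ on $\partial N_i$ can be killed by ``adding a sufficiently small closed $2$-form on $N_i$ whose restriction to each component of $\partial N_i$ has the prescribed period.'' This cannot work: $\int_T(\cdot)$ is a continuous linear functional, so a small closed $2$-form $\zeta$ has $\bigl|\int_T\zeta\bigr|\le \sup|\zeta|\cdot\operatorname{Area}(T)$, and in particular no small perturbation can shift a fixed nonzero period to zero. Localizing $\zeta$ to the collar does not help either, since a closed $2$-form compactly supported in $T\times(0,\delta)$ restricts to an exact form on each $T\times\{t\}$ and hence has zero period there. The Stokes constraint $\sum_T\int_T\eta_i = \int_{N_i}d\eta_i = 0$ (tori oriented as $\partial N_i$) forces the period to vanish automatically only when $N_i$ has a single boundary torus; in general one must \emph{construct} $\eta_i$ with zero boundary periods rather than perturb an arbitrary Sullivan form. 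A clean way to do this is to run Sullivan's averaging with a volume-preserving vector field $v$ transverse to $\mathcal{F}_i$ that is chosen \emph{tangent to} $\partial B_i$ (possible because the trace foliation is a $1$-dimensional foliation of a torus, so has plenty of complementary tangent directions); then $\eta_i=\iota_v\mu$ is closed, positive on $\mathcal{F}_i$, and has flux $\int_T\iota_v\mu=0$ through each boundary torus, after which your cutoff argument applies. This is the real content of the step, and as written your Step 2 identifies the right target but the justification offered for reaching it does not go through.
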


\begin{proof}
This follows immediately from the construction on pages 157--158 of \cite{Ca}. 
\end{proof}

For each $i$, double the pair $(B_i, \mathcal{F}_i)$ to form a closed, oriented $3$-manifold $D(B_i)$ which admits a smooth, co-oriented taut foliation $D(\mathcal{F}_i)$. By \cite[Theorem 2.1.2]{ET}, we can approximate the tangent field of $D(\mathcal{F}_i)$ arbitrarily closely by a tight contact structure $\xi_i$ through a linear deformation. A genericity argument shows that we can assume that the characteristic foliation of $\xi_i$ is hyperbolic of the same slope as $\mathcal{F}_i$ on each boundary component of $B_i$. The restriction of $\xi_i$ to the positive copy of $B_i$ in $D(B_i)$, $\xi_i^+$ say, is positive on $B_i$. Its restriction to the negative copy, $\xi_i^-$, is negative with respect to the orientation on $B_i$ induced from $W$. Give $\xi_i^{\pm}$ the orientation induced from the leaves of $\mathcal{F}$. Then without loss of generality we can suppose that $\omega_i|\xi_i^{\pm} > 0$. 

Next we show how to build $\xi^+$ from $\xi_1^+, \ldots, \xi_n^+$. The construction of $\xi^-$ from $\xi_1^-, \ldots, \xi_n^-$ is similar and will be left to the reader. 

Thicken each JSJ torus of $W$ which is a boundary component of some $B_i$ and think of $W$ as being obtained by gluing together the blocks $B_i$ using these thickenings. More precisely, suppose that $B_i$ and $B_j$ share a JSJ torus $T$ and that $i < j$. We replace $B_i \cup_T B_j$ by $B_i \cup (T \times [0,1]) \cup B_j$ where $T \times \{0\}$ is identified with $T \subset \partial B_i$ and $T \times \{1\}$ with $T \subset \partial B_j$. 

\begin{lemma}
Fix a JSJ torus $T$ of $W$ which is a boundary component of $B_i$ and $B_j$. There is a contact structure $\xi_{ij}^+$ on 
$B_i \cup (T \times [0,1]) \cup B_j$ and a closed $2$-form $\omega_{ij}$ on $W$ such that 

$(1)$ $\xi_{ij}^+|T \times [0,1]$ is non-rotative, $\xi_{ij}^+|B_i = \xi_i^+$ and $\xi_{ij}^+|B_j = \xi_j^+$.

$(2)$ $\omega_{ij}|B_i = \omega_i$, $\omega_{ij}|B_j = \omega_i$, and $\omega_{ij}|\xi_{ij} > 0$. 
\end{lemma}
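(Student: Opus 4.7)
Because $T$ is a good boundary torus of the blocks $B_i$ and $B_j$ (bad tori having been eliminated by Proposition~\ref{no bad}, and any e-e boundary tori absorbed by coalescence of blocks), $T$ is of type h-h, so both $\xi_i^+|T$ and $\xi_j^+|T$ are hyperbolic characteristic foliations with closed leaves of the common rational slope $s$ determined by $\mathcal{F}_i|T$ and $\mathcal{F}_j|T$ (up to isotopy within the appropriate slope class). Using Giroux's flexibility theorem for characteristic foliations in collars of $T$ inside $B_i$ and $B_j$, I will first isotope $\xi_i^+$ and $\xi_j^+$ so that both copies of $T$ become convex with identical dividing sets consisting of parallel closed curves of slope $s$.

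Next I construct $\xi_{ij}^+$: on $T\times[0,1]$ I insert the standard non-rotative tight model $\xi_{\mathrm{mod}} = \ker(\cos\theta(t)\,\eta_1 + \sin\theta(t)\,\eta_2)$, where $\eta_1,\eta_2$ are closed $1$-forms on $T$ chosen so that $\ker\eta_1$ has slope $s$, and $\theta:[0,1]\to\mathbb R$ is monotone with total variation in $(0,\pi)$ whose endpoints realise the two matched boundary characteristic foliations (after one further collar isotopy if needed). Gluing this model to $\xi_i^+|B_i$ and $\xi_j^+|B_j$ across the two collars produces a contact structure $\xi_{ij}^+$ satisfying condition~(1) of the lemma.

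For condition~(2), I apply Lemma~\ref{omega_i}(2) to shrink the supports of $\omega_i$ and $\omega_j$ so that both vanish on the gap $T\times[\tfrac14,\tfrac34]$. On the gap I introduce a closed $2$-form $\omega_{\mathrm{gap}}$, supported in $T\times[\tfrac18,\tfrac78]$, with $\omega_{\mathrm{gap}}|\xi_{ij}^+>0$ throughout: since the non-rotative model has contact form $\alpha_{\mathrm{mod}}$ with $d\alpha_{\mathrm{mod}}|\xi_{\mathrm{mod}}>0$, a bump multiple $\rho(t)\,d\alpha_{\mathrm{mod}}$ plus a closed correction pulled back from $T$ will do. Setting $\omega_{ij} := \omega_i + \omega_j + \omega_{\mathrm{gap}}$ then gives a closed $2$-form that restricts to $\omega_i$ on $B_i$ and $\omega_j$ on $B_j$, and is positive on $\xi_{ij}^+$ at every point: in the two collars because $\omega_i$ respectively $\omega_j$ remain positive on $\xi_{ij}^+$ (which is $C^0$-close there to $\xi_i^+$ respectively $\xi_j^+$), and in the middle of the gap because $\omega_{\mathrm{gap}}$ dominates.

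The main obstacle is producing a gap form that is simultaneously closed, positive on the non-rotative contact structure throughout, and cuts off smoothly to zero at the edges of its support so as to glue to $\omega_i$ and $\omega_j$ without breaking positivity in the transition. The non-rotative condition is precisely what makes this feasible: it supplies an explicit primitive $\alpha_{\mathrm{mod}}$ on $T\times[0,1]$ whose differential is pointwise positive on $\xi_{\mathrm{mod}}$, and any cohomological mismatch of boundary data for $\omega_{\mathrm{gap}}$ lives in $H^2(T\times[\tfrac18,\tfrac78]) = H^2(T)$, which can be killed by adding the pullback of a suitable closed $2$-form on $T$ without disturbing positivity after rescaling.
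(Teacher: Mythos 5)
Your overall scheme --- extend the contact structures across collars, insert a non-rotative interpolation on the middle region, and build the closed $2$-form by adding a ``gap'' contribution to $\omega_i+\omega_j$ --- mirrors the paper's. But there is a genuine gap at the very first step, and it is precisely the step the paper's proof is designed to circumvent. You propose to use Giroux flexibility to make the two copies of $T$ convex with \emph{identical} dividing sets and then insert a $T^2$-invariant model $\ker(\cos\theta(t)\,\eta_1+\sin\theta(t)\,\eta_2)$. However, the hyperbolic characteristic foliations induced by $\xi_i^+$ and $\xi_j^+$ on $T$ have $2k_i$ and $2k_j$ closed leaves respectively, and there is no reason that $k_i=k_j$. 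Giroux flexibility lets you realize any characteristic foliation adapted to a \emph{fixed} dividing set, but it does not change the number of dividing curves; whether the block decomposition can be improved so that these numbers match across JSJ tori is exactly Question~\ref{q: matching hyperbolic behaviour}, which the paper leaves open. Since your $T^2$-invariant model carries a fixed number of dividing curves on every level torus, it cannot bridge mismatched boundary data. The paper avoids the issue entirely: writing the two boundary characteristic foliations as $\ker(\sin(f_i(v))\,du-\cos(f_i(v))\,dv)$ for $i=0,1$, it uses the flexibility in the Giroux realization to arrange $f_0(v)<f_1(v)$ pointwise --- this is possible even when $f_0$ and $f_1$ have different numbers of zeros --- and then the linear interpolation $f_t=(2-3t)f_0+(3t-1)f_1$ defines an embedding $(u,v,t)\mapsto(u,v,f_t(v))$ into the standard tight $T\times\mathbb R$, whose pullback is the desired non-rotative contact structure on the middle third with no hypothesis on $k_i,k_j$.

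The $2$-form construction also needs sharpening. You assert that $\omega_i$ stays positive on $\xi_{ij}^+$ over the collar ``because $\xi_{ij}^+$ is $C^0$-close there to $\xi_i^+$,'' but $\xi_i^+$ is not defined on the inserted collar $T\times[0,1]$, and Lemma~\ref{omega_i} only guarantees positivity of $\omega_i$ on $\mathcal F_i$ over $B_i$. The paper instead observes that the extended contact planes are transverse to every circle $\{u\}\times S^1\times\{t\}$, pulls back a closed $2$-form $\zeta$ from $S^1\times\mathbb R$ which is positive on those planes and supported near the collar, and replaces $\omega_i$ by $\omega_i+r\zeta$ for $r\gg 0$ so that positivity on the collar holds by domination. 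Your gap form via $\rho(t)\,d\alpha_{\mathrm{mod}}$ is in the right spirit but imprecise: the closed form $d(\rho\,\alpha_{\mathrm{mod}})$ restricts to $\rho\,d\alpha_{\mathrm{mod}}$ on the contact planes and therefore vanishes where $\rho$ does, so positivity at the edges of the gap has to come from somewhere else; the paper's scaling-by-$r$ device handles exactly this kind of transition uniformly.
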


\begin{proof}
Without loss of generality we can suppose that $i < j$. First we extend $\xi_i$ across $T \times [0, \frac13]$. 

Let $\mathcal{G}$ denote the characteristic foliation on $T$ determined by $\xi_i^+$. Then $\mathcal{G}$ is hyperbolic with $2k$ closed leaves for some $k \geq 1$. Further, the dividing set $\Gamma_T$ of $T$ with respect to $\xi_i^+$ is a spine of the complement of these $2k$ closed leaves.  

Fix coordinates $(u,v) \in S^1 \times S^1$ on $T$ such that $\mathcal{G}$ is transverse to $\{u\} \times S^1$ for each $u \in S^1$, the closed leaves of $\mathcal{G}$ are each of the form $S^1 \times \{v\}$ for some $v \in S^1$ as are the components of the dividing set of $T$. Then there is a smooth function $g: T \to (-\frac{\pi}{2}, \frac{\pi}{2})$ such that the tangent space to $\mathcal{G}$ at $(u,v)$ is given by $\hbox{ker}(\sin(g(u,v))du - \cos(g(u,v))dv)$. Further, there are distinct $v_1, \ldots, v_{2k} \in S^1$ indexed successively around $S^1$ such that $g(u,v) = 0$ if and only if $v \in \{v_1, \ldots, v_{2k}\}$ and $g$ is alternately positive and negative on the open annuli $S^1 \times (v_i, v_{i+1})$.  

It follows from (the proof of) Giroux's Flexibility Theorem \cite[Proposition 3.6]{Gi} that we can extend $\xi_i^+$ across $B_i \cup_{T = T \times \{0\}} \big(T \times [0, \frac13]\big)$ in such a way that 
\vspace{-.2cm} 
\begin{itemize}

\item the extension is non-rotative on $T \times [0, \frac13]$ and is transverse to the circles to $\{u\} \times S^1 \times \{t\}$ for each $u \in S^1$ and $t \in [0, \frac13]$; 

\vspace{.2cm} \item each torus $T_t = T \times \{t\}$ is convex with dividing set $\Gamma_T \times \{t\}$; 

\vspace{.2cm} \item the characteristic foliation on $T_{\frac13}$ is hyperbolic with $2k$ closed leaves whose tangent line field is given by $\hbox{ker}(\sin(f_0(v))du - \cos(f_0(v))dv)$ where $f_0: S^1 \to  (-\frac{\pi}{2}, \frac{\pi}{2})$ is a smooth function which is positive, respectively negative, on $(v_i, v_{i+1})$ if $g$ is positive, respectively negative, on $S^1 \times (v_i, v_{i+1})$. Further, $f_0$ may be arbitrarily chosen subject to these conditions.. 
 
\end{itemize}
\vspace{-.2cm}
We claim that there is a closed $2$-form on $W$ which agrees with $\omega_i$ on $B_i$, is positive on the extension of $\xi_i^+$ over $T \times [0, \frac13]$, and is zero outside an arbitrarily small neighbourhood of $B_i \cup T \times [0, \frac13]$. To see this, assume, without loss of generality (cf. Lemma \ref{omega_i}(2)), that the support of each $\omega_i|B_i \cup T \times [0, \frac38]$ is contained in $B_i \cup T \times [0, \frac16]$. Let $\varphi: T \times [0,1] \to S^1 \times [0,1]$ be the projection $(u,v,t) \mapsto (u,t)$. For $\epsilon > 0$ arbitrarily small, choose a $2$-form $\eta$ on $S^1 \times \mathbb R$ which is non-zero on $S^1 \times (0,\frac13 + \epsilon)$ but identically zero on its complement. Then $\varphi^*(\eta)$ extends to a closed $2$-form $\zeta$ on $W$ which vanishes on the complement of $T \times (0,\frac13 + \epsilon)$. Further, since the extension of $\xi_i^+$ is transverse to each $\{u\} \times S^1 \times \{t\} \subset T \times [0, \frac13]$, we can suppose that $\zeta$ evaluates positively on this the restriction of the extension to $T \times [0, \frac13]$, at least up to replacing $\zeta$ by its negative. Then for all $r \gg 0$, 
$$\omega_i(r) = \omega_i + r \zeta$$ is the desired $2$-form. 

A similar construction extends $\xi_j^+$ appropriately across $B_j \cup_{T = T \times \{1\}} \big(T \times [\frac23, 1]\big)$ and produces a closed $2$-form $\omega_j(r)$ on $W$ (for large $r$) which agrees with $\omega_j$ on $B_j$, is positive on the extension of $\xi_j^+$, and is zero outside an arbitrarily small neighbourhood of $B_j \cup T \times [\frac23, 1]$. Further, the characteristic foliation on $T_{\frac23}$ is hyperbolic with the same number of closed leaves as the characteristic foliation $\mathcal{G}'$ on $T$ determined by $\xi_j^+$ and its tangent line field is given by $\hbox{ker}(\sin(f_1(v))du - \cos(f_1(v))dv)$ where $f_1: S^1 \to  (-\frac{\pi}{2}, \frac{\pi}{2})$ is a smooth function which is alternately positive and negative on the components of the complement of its zeros (as determined by $\mathcal{G}'$). Given the flexibility in our choice of coordinates on $T_1$ and, as remarked above, the  flexibility in the choice $f_1$, we can arrange for $f_0(v) < f_1(v)$ for all $v$. 

To complete the proof, we must interpolate between the extension of $\xi_i^+$ over $T \times [0, \frac13]$ and that of $\xi_j^+$ over $T \times [\frac23, 1]$ and construct the closed $2$-form $\omega_{ij}$.  

Set $f_t = (2-3t)f_0 + (3t -1)f_1$ and observe that as $f_0 < f_1$, we have $f_t(v) < f_s(v)$ if $t < s$. Thus the function $\varphi: T \times I \to T \times \mathbb R$ defined by $\varphi(u,v,t) = (u, v, f_t(v))$ is a smooth embedding. Endow $T \times \mathbb R$ with the contact structure determined by the contact $1$-form $\eta(u,v,t) = \sin(t)du - \cos(t)dv$. Next endow $T \times I$ with the contact structure $\xi_T$ determined by the contact $1$-form $\eta_0= \varphi^*(\eta)$. That is, 
$$\eta_0(u,v,t) = \sin(f_t(v))du - \cos(f_t(v))dv$$
It follows that the contact plane of $\xi_T$ at $(u,v,t)$ is spanned by the vectors $\{\frac{\partial}{\partial t}, \cos(f_t(v))\frac{\partial}{\partial u} + \sin(f_t(v))\frac{\partial}{\partial v} \}$ so 
\vspace{-.2cm}
\begin{itemize}

\item $\xi_T$ is transverse to each $T_t = T \times \{t\}$; 

\vspace{.2cm} \item the characteristic foliation on $T_t$ determined by $\xi_T$ is transverse to $\{u\} \times S^1 \times \{t\}$ for each $u$ and $t$; 

\vspace{.2cm} \item for each $t$ there are only finitely many closed orbits of the characteristic foliation of $\xi_T|T_t$ and each is of the form $S^1 \times \{v\} \times \{t\}$ for some $v \in S^1$;  

\vspace{.2cm} \item $\xi_T^+|T \times [\frac13,\frac23]$ is non-rotative. 

\end{itemize}
\vspace{-.2cm}
Use $\xi_T$ to piece together the extensions of $\xi_i^+$ and $\xi_j^+$, thus producing $\xi_{ij}^+$. As above we can construct a closed $2$-form $\omega_T$ on $W$ which is positive on $\xi_{ij}^+|T \times (\frac13, \frac23)$ but zero outside its complement. For $r \gg 0$, 
$$\omega_{ij}(r) = \omega_i(r) + r \omega_T + \omega_i(r)$$
satisfies (2) of the lemma. 
\end{proof}

Applying the lemma inductively to each boundary component of the blocks we can build a contact structure $\xi^+$ on $W$ and a $1$-parameter family of closed $2$-form $\omega(r)$ ($r \gg 0$) such that $\omega(r)|\xi^+ > 0$. We construct $\xi^-$ similarly, and the reader will verify that for large $r$, $\omega(r)|\xi^- > 0$. This completes the proof of Theorem \ref{LO implies NLS}.
\end{proof}

\end{document}